\documentclass[reqno]{amsart}
\usepackage{amssymb}
\usepackage{hyperref}
\usepackage{graphicx,color}
\usepackage[section]{algorithm}
\usepackage{algorithmic}
\usepackage{enumerate}
\usepackage{mathrsfs,enumerate}
\usepackage{multirow,array}
\usepackage{subfigure}
\usepackage{pstricks,pst-node,pst-tree, pst-plot, pst-eps}
\usepackage{xspace}
\usepackage{marginnote}
\usepackage{upgreek}
\usepackage{bm}
\usepackage{comment}
\usepackage{tikz}
\usetikzlibrary{matrix}

%%%%%%%%%%%%%%%%%%%%%%%%%%%%%%%%%%%%%%%%%%%%%%%%%
% micros

\chardef\atsign='100

\newcommand{\Forall}{\quad\text{for all }}

\newcommand{\dH}{\dot H}

\newcommand{\cQ}{\mathcal Q}
\newcommand{\bv}{\mathbf v}
\newcommand{\bx}{\mathbf x}

\newcommand{\bP}{\mathbf P}
\newcommand{\I}{\mathrm I}
\newcommand{\II}{\mathrm{II}}
\newcommand{\bsxi}{\boldsymbol{\xi}}

\newcommand{\bsz}{\boldsymbol{z}}

\renewcommand{\Re}{\mathfrak{Re}}
\renewcommand{\Im}{\mathfrak{Im}}

\DeclareMathOperator*{\argmin}{\arg\!\min}

%%%%%%%%%%%%%%%%%%%%%%%%%%%%%%%%%%%%%%%%%%%%%%%%%
% Theorem styles

\theoremstyle{plain}% default
\newtheorem{theorem}{Theorem}[section]
\newtheorem{lemma}[theorem]{Lemma} 
\newtheorem{proposition}[theorem]{Proposition}

\newtheorem{remark}{Remark}[section]

\graphicspath{{figures/}}

%%%%%%%%%%%%%%%%%%%%%%%%%%%%%%%%%%%%%%%%%%%%%%%%% to be commented

\newcommand{\DG}[1]{{\color{cyan}{#1}}}

\newcommand{\WLc}[1]{{\color{red}\textbf{\textsf{[WL: #1]}}}}
\newcommand{\DGc}[1]{{\color{cyan}\textbf{\textsf{[DG: #1]}}}}

\author[A.~Bonito]{Andrea Bonito}
\address[A.~Bonito]{Department of Mathematics, Texas A\&M University, College Station, TX 77843, USA}
\email{bonito@math.tamu.edu}

\author[D.~Guignard]{Diane Guignard}
\address[D.~Guignard]{Department of Mathematics and Statistics, University of Ottawa, Ottawa, ON K1N 6N5, Canada}
\email{dguignar@uottawa.ca}

\author[W.~Lei]{Wenyu Lei}
\address[W.~Lei]{School of Mathematical Sciences, University of Electronic Science and Technology of China, No. 2006 Xiyuan Ave., West Hi-Tech Zone, 611731, Chengdu, China}
\email{wenyu.lei@uestc.edu.cn}

\thanks{A.B. is partially supported by the NSF Grant DMS-2110811. D.G. acknowledges the support provided by the Natural Science and Engineering Research Council (NSERC, grant RGPIN-2021-04311). W.L. is partially supported by the National Natural Science Foundation of China (Grant No. 12301496).}

%%%%%%%%%%%%%%%%%%%%%%%%%%%%%%%%%%%%%%%%%%%%%%%%%%
\begin{document}

\title[Numerical Approx of GRFs]
{Numerical Approximation of Gaussian random fields on Closed Surfaces}

\date{\today}

\subjclass{65M12, 65M15, 65M60,  35S11,  65R20}

\maketitle

\begin{abstract}
     We consider the numerical approximation of Gaussian random fields on closed surfaces defined as the solution to a fractional stochastic partial differential equation (SPDE) with additive white noise. The SPDE involves two parameters controlling the smoothness and the correlation length of the Gaussian random field. The proposed numerical method relies on the Balakrishnan integral representation of the solution and does not require the approximation of eigenpairs. Rather, it consists of a sinc quadrature coupled with a standard surface finite element method. We provide a complete error analysis of the method and illustrate its performances in several numerical experiments.

\end{abstract}

\section{Introduction}

% due e.g., to lack of knowledge or measurements.
Random fields appear in many applications. A typical example is when uncertainty is incorporated in mathematical models. The uncertainty in the parameters reflects an intrinsic variability of the system or our inability to adequately characterize all the data, for instance due to measurements. In a probability setting, the uncertainty is modeled by random variables or more generally random fields. A concrete example in the context of partial differential equations (PDEs) is Darcy's flow, where the permeability is modeled as a log-normal random field \cite{BN2014,NT2015}.
Many methods have been developed to solve stochastic partial differential equations (SPDEs) or to compute statistics of their solutions. The most commonly used are Monte Carlo-type methods \cite{GKNSS2011,CGST2011}, the stochastic Galerkin method \cite{GS1991,BTZ2004} and the stochastic collocation method \cite{XH2005,BNT2007}. Common to all these methods is the use of samples from random fields.

The standard approach to evaluate a random field is to use a truncated series expansion, e.g. a Karhunen--Lo\`eve (KL) \cite{L1977,L1978} or Fourier expansion, thereby consisting of only a finite number of random variables (this process is usually referred to as the \emph{finite dimensional noise assumption} in the random PDEs literature).

The truncated KL expansion is widely used in practice as it is the one minimizing the mean square error. In specific cases, the analytic expression of the KL expansion is known, e.g. for random fields with separable exponential covariance function on hyperrectangles (the eigenfunctions being (product of) sine and cosine functions) \cite{KL2010,LPS2014} or for isotropic Gaussian random fields (GRFs) on the sphere. Here the eigenfunctions are the spherical harmonics \cite{MP2011,LS2015} and are known analytically. However, in general, the computation of KL expansions requires approximating eigenfunctions of an elliptic operator.
We refer to \cite{ST2006} for an efficient algorithm (based on discontinuous finite elements) that can be used to approximate the truncated KL expansion of a random field with prescribed mean field and covariance function defined on a Euclidean domain. In the case of GRFs on compact Riemannian manifolds, a numerical method combining a Galerkin approximation of the Laplace--Beltrami operator with a  truncated Chebyshev series approximation of the square-root of a covariance matrix is introduced in \cite{LP2021}.
% which can be costly in practice. 

%\DGc{For random fields indexed by surfaces, a lot has been done when the surface is the 2-sphere. In this case, analytic expression for the eigenpairs of the Laplace--Beltrami operator is known (the eigenfunctions being the spherical harmonic) \cite{MP2011}. Rate of convergence of truncated KL in this setting, together with sample regularity analysis, can be found for instance in \cite{LS2015}.}

As an alternative to KL expansions, multilevel representations have been considered for efficient simulation of random fields. We refer for instance to \cite{BD2022} for (modified) spherical needlet-based representations of GRFs on the sphere or \cite{HHKS2021} for wavelet-based representations of GRFs indexed by Euclidean domains, manifolds, or graphs.

In this work, we are interested in GRFs which belong to the so-called (Whittle--) Mat\'ern class, see \eqref{def:Matern}. They are characterized as the solution to a fractional PDE with additive white noise of the form
\begin{equation} \label{def:SPDE}
     (\kappa^2I-\Delta_{\gamma})^s\widetilde u = \widetilde w,
\end{equation}
where $\kappa >0$, $s>\frac{n-1}{4}$ and $\gamma$ is a closed hypersurface in $\mathbb{R}^n$ ($n=2,3$).

This link between GRFs and fractional SPDEs was already observed in \cite{W1954,W1963} for stationary fields on $\mathbb{R}^n$ and later extended in \cite{lindgren2011explicit} to more general fields, including GRFs on bounded domains. Numerical solvers for such fractional SPDEs can then be used to approximate random fields, see for instance \cite{BKK2018,BK2020, cox2020regularity} for GRFs defined in Euclidean domains, \cite{jansson2021surface,BTMD2020} for GRFs on general manifold, and \cite{BSW2022} for GRFs on graphs. A list of recent applications of the SPDE approach to the approximation of random fields can be found in \cite{LBR2022}.

% In this work, we 
% tilde: say later, convention for functions defined on \gamma; L undefined

We propose a numerical method for approximating the solution to \eqref{def:SPDE} when $\frac{n-1}{4}<s<1$; refer to Remark~\ref{r:recursion} for extensions to $s \geq 1$. The method relies on the integral representation
\begin{equation} \label{eqn:integral_repres}
     (\kappa^2 I-\Delta_{\gamma})^{-s} \widetilde w = \frac{\sin(\pi s)}{\pi}\int_0^\infty \mu^{-s}((\mu+\kappa^2) I -\Delta_\gamma)^{-1}\widetilde w \, d\mu.
\end{equation}
The improper integral is approximated by a sinc quadrature and a continuous linear finite element method on an approximate surface $\Gamma$ is put forward to approximate the integrand at each sinc quadrature point.

Such a two-step numerical strategy (sinc quadrature coupled with a finite element method) was originally proposed in \cite{bonito2015numerical} for Euclidean domains and $\kappa = 0$. We refer to \cite{BP15,BLP17,antil2018fractional} for several extensions and notably to \cite{bonito2021approximation} when the domain is a closed hypersurfaces.

All the above mentioned work on fractional PDEs are for deterministic data.
Although the approximation of the solution to \eqref{def:SPDE} considered in this work is similar to the one introduced in \cite[Section~3]{bonito2021approximation}, realizations of the stochastic right-hand side $\widetilde w$ are not almost surely in $L^2(\gamma)$.
% almost surely not in $\widetilde w$. 
% TODO (add details plus update what follows to the current setting)
% Here we follow the idea from \cite{bolin2020numerical, herrmann2020multilevel}. Also, the convergence theory previously developed cannot be applied.
Whence, following \cite{herrmann2020multilevel}, we replace $\widetilde w$ by a projection onto a conforming finite element space defined on $\gamma$. The resulting approximation is a Gaussian random vector with zero mean and a covariance matrix given by a weighted finite element mass matrix.
Alternatively in \cite{bolin2020numerical}, which considers the Euclidean setting, $\widetilde w$ is approximated by an expansion with respect to the discrete eigenfunctions of the conforming finite element approximation of the operator $(\kappa^2 I-\Delta_\gamma)$. Using a change of basis, the resulting approximation to $\widetilde w$ can be expressed in terms of the finite element basis functions and can then be used in practice since both are equal in the mean square sense. This property is critical in our analysis of the error in the mean square norm.

We now comment on the main novelties of this work.

\begin{itemize}
     \item Compared to \cite{bolin2020numerical,herrmann2020multilevel}, we improve on the convergence of the sinc quadrature by deriving an exponential convergence rate with respect to the quadrature spacing and in particular not depending on the finite element mesh size; see Theorem~5.2 and~6.1 as well as the numerical validation in Remark~\ref{r:sinc-independent}. This implies that the mesh size and the quadrature spacing can be chosen independently.
     \item The abstract analysis proposed in \cite{herrmann2020multilevel} considers a finite element method defined on the exact surface and provides strong error estimates. 
     Instead, we design and analyze a parametric finite element method on approximate surfaces by taking into account the geometric error. Note that setting the finite element on approximate (polygonal) surfaces facilitates the construction of the discrete system.
     In \cite{jansson2021surface}, the surface is restricted to a sphere but strong and mean squared norm error estimates are derived for parametric finite element methods on approximations of the sphere. In that case, the approximation of the white noise strongly relies on the knowledge of the eigenpairs for the sphere. Our analysis, accounts for the intricate influence of the geometric error in the approximation of the eigenpairs for general $C^3$ surfaces.
     % The numerical scheme is based on an approximate surface so it is easier to build the discrete system. On the other hand, our analysis account for an extra geometric approximation error.
     % We \WL{estimate the discrepancy of the mean square norm}
     % of the proposed method 
     \item Borrowing ideas from \cite{bolin2020numerical}, we estimate the discrepancy between the mean square norm of the exact solution and that of the proposed approximate solution. Our analysis relies on the asymptotic behavior of the eigenvalues of the Laplace--Beltrami operator (Weyl's law) as well as the ability for the finite element method to approximate the eigenvalues of $(\kappa^2 I - \Delta_\gamma)$. However, the analysis in \cite{bolin2020numerical} relies on an assumption for the approximation of individual eigenvalues (Assumption~2.6 in \cite{bolin2020numerical}) 
     \begin{equation}\label{o:ass}
     0< \widetilde \Lambda^\gamma_j - \widetilde \lambda_j \leq C \widetilde \lambda_j^r h^q, \qquad q>1, \qquad r>0,
     \end{equation}
     where $\widetilde \Lambda^\gamma_j$ is the approximation of the $j$th eigenvalue $\widetilde \lambda_j$ (see \eqref{e:eigenpb} and \eqref{e:dicrete_eig}). 
     To derive optimal convergence rates in the mean square norm, it appears that one needs $q=r=2$. With this choice of parameters, the constant $C$ in \eqref{o:ass} may not be uniform with in $j$; see e.g. \cite[Theorem~47.10]{ern2021finite}. % and \cite[Theorem~6.1]{MR2743037}. 
     In this work, we do not rely on such assumption but rather derive a weaker but  cluster robust eigenvalue error estimate (Lemma~\ref{l:eigen-error}) instrumental for the optimal convergence rates in the mean square norm provided by Theorem~\ref{t:weak-convergence}. It is worth pointing out that the cluster robust estimate of Lemma~\ref{l:eigen-error} is used in \cite[Appendix~C]{LP2021} to guarantee \eqref{o:ass} with a uniform constant $C$ when $q=r=2$.
     
           %Utilizing this property, we show that convergence rate in the weak sense is of $O(h^{4s-(n-1)}) + O(h^2)$. Here we note that latter rate is due the surface approximation using continuous piecewise linear functions on simplicial surfaces (or bilinear functions on quadrilateral surfaces). We refer to Theorem~\ref{t:weak-convergence} and Remark~\ref{r:rate-limited} for more discussions on this topic. 
           %We show (Theorem~\ref{t:strong}) that the strong mean square $L^2(\gamma)$ error decay $O(h^{2s-(n-1)/2})$ (cf. Theorem~\ref{t:strong}). This rate coincides with the one for the conforming scheme provided by \cite[Proposition~2.2]{herrmann2020multilevel}.
\end{itemize}

This paper is organized as follows. We start by recalling in Section~\ref{sec:RFs} known results about random fields, including properties of their KL expansions. In Section~\ref{s:model}, we define our model problem \eqref{e:spde} in Sobolev spaces and introduce the Dunford--Taylor integral representation for the solution. The numerical method is introduced in Section~\ref{s:simulation}. The strong and mean square norm error estimates are discussed in Sections~\ref{s:analysis} and \ref{s:weak}, respectively.
Theorem~\ref{t:strong} (strong convergence) and Theorem~\ref{t:weak-convergence} (convergence in the mean square norm) are the main results of this work. Several numerical experiments illustrating the efficiency of the method and the influence of $s$ and $\kappa$ on resulting GRFs are provided in Section~\ref{s:numeric}.
% to justify our findings.

% We then review the numerical scheme in Section~\ref{s:simulation} and provide a detailed analysis in Section~\ref{s:analysis}. Numerical experiments are provided in Section~\ref{s:numeric} to justify our findings.

%\cite{jansson2021surface}: Similar setting then here (surface FE, sinc quadrature for the Dunford–Taylor integral representation). In this paper, the surface is the sphere and thus the eigenfunctions of the operator (same as Laplace--Beltrami) are known analytically. They first truncate the KL expansion of the white noise, retaining $N=(L+1)^2$ terms, and then use two different methods to approximate the truncated white noise in the finite element space: i) interpolation on the discrete surface (then lifted to the exact one) ii) orthogonal projection onto the FE space lifted to the exact sphere. In this paper $L$ is fixed, and thus the number of terms $N$ used to represent the white noise is fixed, while in our setting we take $N\sim h^{-(n-1)}$. If restricted to the sphere, why bother approximating the solution since the exact one is known?
% Note that N=(L+1)^2 and thus for n=3, L~1/h as N~1/h^2. In Theorem 4.3 they have: i) the truncation error are the same L^{1-2s} ~ N^{(1-2s)/2} ~ h^{2s-1}, ii) the sinc quadrature error is (L+1)e^{-c pi^2/k}, i.e., extra factor 1/h, iii) the FE error is like L(h^2 + h^aL^a), a\in[0,2], so we cannot take L~1/h otherwise the last term is like 1/h.

\section{Random Fields} \label{sec:RFs}

We start this section by recalling the notion of random fields. A natural approach to represent a random field is to consider its Karhunen--Lo\`eve expansion formally described in Section~\ref{ss:rm}, see \cite{KL2010} and references therein. However, this representation is computationally challenging as it requires (an approximation of) the eigenpairs of the associated covariance operator, see \eqref{def:cov} below.
Alternatively, Mat\'ern Gaussian random fields defined in Section~\ref{ss:grf} and considered in this work are solutions to an SPDE with Gaussian white noise. This representation is critical for the efficient numerical method analyzed in this work.

%Before embarking on the analysis of (Gaussian) random fields on surfaces, we first summarize known results about the approximation of random fields in Euclidean domains.

\subsection{Random Fields in Euclidean Domains}\label{ss:rm}
Let $D\subset\mathbb{R}^n$, $n=1,2,3$, be a bounded domain and let $(\Omega,\mathcal{F},\mathbb{P})$ be a complete probability space, where $\Omega$ is the set of outcomes, $\mathcal{F}\subset 2^{\Omega}$ is a $\sigma$-algebra of events, and $\mathbb{P}: \mathcal{F}\rightarrow [0,1]$ is a probability measure. Let $u:D\times\Omega\rightarrow\mathbb{R}$ be a real-valued random field, namely $u(\bx,\cdot)$ is a real-valued random variable for each $\bx\in D$.
% (namely a collection of random variable indexed by $\bx\in D$) satisfying $u\in L^2(\Omega;L^2(D))$, \WL{where for any Banach space $B$ of functions $v:D\rightarrow\mathbb{R}$ we define
We suppose that $u\in L^2(\Omega)\otimes L^2(D)$, that is $u(\cdot,\omega)\in L^2(D)$ $\mathbb{P}$-a.e. in $\Omega$ and $u(\bx,\cdot)\in L^2(\Omega)$ a.e. in $D$. With a slight abuse of notation\footnote{For any Hilbert space $H$ on $D$, the tensor product space $L^2(\Omega)\otimes H$ is isomorphic to $L^2(\Omega;H)$ defined in \eqref{def:Bochner}, see for instance \cite{BTZ2004}.}, we shall often consider $u$ as a square integrable mapping $u:\Omega\rightarrow L^2(D)$.
This means that $u\in L^2(\Omega;L^2(D))$, where for any Hilbert space $H$ on $D$ equipped with the norm $\|\cdot\|_H$, the Bochner space $L^2(\Omega;H)$ is defined by
\begin{equation} \label{def:Bochner}
L^2(\Omega;H):=\left\{v:\Omega\rightarrow H:\,\, v \text{ is strongly measurable and} \,\, \|v\|_{L^2(\Omega;H)}<\infty\right\}    
\end{equation}
with
$$\|v\|_{L^2(\Omega;H)}^2:=\mathbb{E}\left[\|v\|_H^2\right]:=\int_{\Omega}\|v(\omega)\|_H^2d\mathbb{P}(\omega)<\infty.$$

%\textcolor{cyan}{DG: Say here that if $v\in L^2(\Omega;B)$ then $\mathbb{P}$-almost surely we have $v\in B$? Also should we mention that we do not distinguish between $u:D\times\Omega\rightarrow\mathbb{R}$ and $\mathbf{u}:\Omega\rightarrow B$ given by $\mathbf{u}(\omega)(\bx)=u(\bx,\omega)$?}
% $$\|u\|_{L^2(\Omega;L^2(D))}^2:=\mathbb{E}\left[\|u\|_{L^2(D)}^2\right]:=\int_{\Omega}\|u(\cdot,\omega)\|_{L^2(D)}^2d\mathbb{P}(\omega)<\infty.$$
%The Bochner space space $L^2(\Omega;H)$ is defined for any Hilbert space $H$ on $D$ by
%\begin{equation} \label{def:Bochner}
%L^2(\Omega;H):=\left\{v:\Omega\rightarrow H:\,\, v \text{ is strongly measurable and} \,\, \|v\|_{L^2(\Omega;H)}<\infty\right\},    
%\end{equation}
%where
We introduce $\bar u:D\rightarrow\mathbb{R}$ and ${\rm cov}_u:D\times D\rightarrow\mathbb{R}$ the mean and covariance functions of $u$ defined respectively by
% \WL{(pointwise formally)}
$$\bar u(\bx):=\mathbb{E}[u(\bx,\cdot)] \quad \mbox{for a.e.} \,\,\bx\in D,$$
and
\begin{equation} \label{def:cov}
     {\rm cov}_u(\bx,\bx'):=\mathbb{E}[(u(\bx,\cdot)-\bar u(\bx))(u(\bx',\cdot)-\bar u(\bx'))] \quad \mbox{for a.e.} \,\,\bx,\bx'\in D.
\end{equation}
% \text{\WL{a.e.} } 
% \Comment{DG: To simplify the presentation, we could assume w.l.o.g. that $\bar u\equiv 0$.}

\noindent We say that $u$ is \emph{weakly stationary} if the mean field $\bar u$ is constant and the covariance function depends only on the difference $\bx-\bx'$, namely ${\rm cov}_u(\bx,\bx')=c(\bx-\bx')$ for some function $c:D\rightarrow\mathbb{R}$. If in addition ${\rm cov}_u(\bx,\bx')$ depends only on the Euclidean distance $|\bx-\bx'|$ then $u$ is said to be \emph{isotropic}; see \cite{AT2007}.
% page 102 and page 115

To motivate the representation of a random field by its Karhunen--Lo\`eve expansion, see \cite{KL2010} and references therein, we suppose that $u$ is such that ${\rm cov}_u$ in \eqref{def:cov} is continuous on $\bar D\times \bar D$. This is for instance the case when $u$ is a Gaussian random field with Mat\'ern covariance, see Section~\ref{ss:grf} below, and $D$ is convex; see for instance \cite[Lemma 11]{NT2015}. Then $u$ can be represented by its Karhunen--Lo\`eve expansion
% If ${\rm cov}_u$ is continuous on $\bar D\times \bar D$, \WL{which is the case when $u$ is continuous in the mean square sense \cite{KL2010}}, then $u$ can be represented by its so-called Karhunen--Lo\`eve expansion \WL{(see \cite{KL2010} and references therein)}
% \cite{L1977,L1978}
\begin{equation} \label{eqn:KL}
     u(\bx,\omega) = \bar u(\bx) + \sum_{i=1}^{\infty}\sqrt{\lambda_i}\xi_i(\omega)\phi_i(\bx),
\end{equation}
% where the equality is understood in
where the sum converges in $L^2(\Omega;L^2(D))$. In \eqref{eqn:KL}, $\{(\lambda_i,\phi_i)\}_{i=1}^{\infty}$ with $$\lambda_1\ge\lambda_2\ge\ldots\rightarrow 0 \quad \mbox{and} \quad \int_D\phi_i(\bx)\phi_j(\bx)d\bx=\delta_{ij}$$
are the eigenpairs of the (compact, self-adjoint, positive semi-definite) Hilbert-Schmidt integral operator $C_u:L^2(D)\rightarrow L^2(D)$ defined for $v\in L^2(D)$ by
$$(C_uv)(\bx):=\int_{D}{\rm cov}_u(\bx,\bx')v(\bx')d\bx', \quad \bx\in D.$$
That is, the eigenpair $(\lambda_i,\phi_i)$, $i\in\mathbb{N}$, satisfies the following eigenvalue problem (Fredholm integral equation of the second kind)
\begin{equation} \label{def:EVP_KL}
     \int_{D}{\rm cov}_u(\bx,\bx')\phi_i(\bx')d\bx' = \lambda_i\phi_i(\bx), \quad \bx\in D.
\end{equation}
Moreover, the random variables $\{\xi_i\}_{i=1}^\infty$, which are given by
\begin{equation} \label{def:RV_KL}
     \xi_i(\omega):=\frac{1}{\sqrt{\lambda_i}}\int_{D}(u(\bx,\omega)-\bar u(\bx))\phi_i(\bx)d\bx,
\end{equation}
have zero mean, unit variance, and are pairwise uncorrelated:
$$\mathbb{E}[\xi_i]=0 \quad \mbox{and} \quad \mathbb{E}[\xi_i\xi_j]=\delta_{ij}.$$
In other words they are orthonormal in $L^2(\Omega)$.
% , i.e., $\mathbb{E}[\xi_i\xi_j]=0$ for all $i\ne j$.

\subsubsection{Properties of the KL Expansion}
The KL series in \eqref{eqn:KL} converges not only in $L^2(\Omega;L^2(D))$ but also in $L^2(\Omega)$ pointwise %not pointwisely 
in $\bx\in D$. Indeed, since ${\rm cov}_u$ is symmetric, continuous, and positive semi-definite, thanks to Mercer's theorem \cite{M1909} we have
\begin{equation} \label{eqn:expansion_cov_u}
     {\rm cov}_u(\bx,\bx')=\sum_{i=1}^{\infty}\lambda_i\phi_i(\bx)\phi_i(\bx'),
\end{equation}
where the convergence is absolute and uniform on $D\times D$, and thus
$${\rm Var}\left[u(\bx,\cdot)\right]={\rm cov}_u(\bx,\bx)=\sum_{i=1}^{\infty}\lambda_i\phi_i(\bx)^2.$$
Now for any $N\in\mathbb{N}$, let
\begin{equation} \label{eqn:truncated_KL}
     u^N(\bx,\omega):=\bar u(\bx)+\sum_{i=1}^N\sqrt{\lambda_i}\xi_i(\omega)\phi_i(\bx)
\end{equation}
denote the truncated KL expansion with $N$ terms, which satisfies the following relation for any $\bx\in D$
$$
     \begin{aligned}
          {\rm Var}\left[u^N(\bx,\cdot)\right] & =\mathbb{E}\left[(u^N(\bx,\cdot)-\bar u(\bx))^2\right]                                                                                                        \\
                                               & =\sum_{i,j=1}^{N}\sqrt{\lambda_i}\sqrt{\lambda_j}\phi_i(\bx)\phi_j(\bx)\underbrace{\mathbb{E}[\xi_i\xi_j]}_{=\delta_{ij}}=\sum_{i=1}^N\lambda_i\phi_i(\bx)^2.
     \end{aligned}
$$
Then for any $\bx\in D$, thanks to Fubini's theorem, relation \eqref{eqn:expansion_cov_u}, and the fact that $\{\phi_i\}_{i=1}^{\infty}$ forms an orthonormal basis of $L^2(D)$, we infer that
\begin{eqnarray}
     \mathbb{E}\left[(u(\bx,\cdot)-u^N(\bx,\cdot))^2\right] & = & \mathbb{E}\left[(u(\bx,\cdot)-\bar u(\bx)+\bar u(\bx)-u^N(\bx,\cdot))^2\right] \nonumber \\
     & = & {\rm Var}(u(\bx,\cdot))-{\rm Var}(u^N(\bx,\cdot)) \nonumber \\
     & = & \sum_{i=N+1}^{\infty}\lambda_i\phi_i(\bx)^2\rightarrow 0 \quad \mbox{as } N\rightarrow\infty \label{eqn:MS_truncation}
\end{eqnarray}
uniformly in $\bx\in D$, from which we deduce that \eqref{eqn:KL} holds for all $\bx\in D$.
%\Comment{DG: Details for the second equality in \eqref{eqn:MS_truncation}. First note that
%$$\mathbb{E}\left[(u-\bar u+\bar u-u_N)^2\right]={\rm Var}(u)-2\mathbb{E}\left[(u-\bar u)(u_N-\bar u)\right]+{\rm Var}(u_N).$$
%Now for the crossed term, we have
%\begin{eqnarray*}
%     \mathbb{E}\left[(u-\bar u)(u_N-\bar u)\right](\bx) & \stackrel{\eqref{eqn:truncated_KL}}{=} & \mathbb{E}\left[(u(\bx,\cdot)-\bar u(\bx))\sum_{i=1}^N\sqrt{\lambda_i}\xi_i\varphi_i(\bx)\right] \\
%     & \stackrel{\eqref{def:RV_KL}}{=} & \sum_{i=1}^N\varphi_i(\bx)\mathbb{E}\left[(u(\bx,\cdot)-\bar u(\bx))\int_D(u(\bx',\cdot)-\bar u(\bx'))\varphi_i(\bx')d\bx'\right] \\
%     & \stackrel{\eqref{def:cov},{\rm Fubini}}{=} & \sum_{i=1}^N\int_D{\rm cov}_u(\bx,\bx')\varphi_i(\bx')d\bx' \\
%     & \stackrel{\eqref{eqn:expansion_cov_u}}{=} & \sum_{i=1}^N\varphi_i(\bx)\sum_{j=1}^{\infty}\lambda_j\varphi_j(\bx)\underbrace{\int_D\varphi_j(\bx')\varphi_i(\bx')d\bx'}_{=\delta_{ij}}. \\
%     & = & \sum_{i=1}^N\lambda_i\varphi_i(\bx)^2 = {\rm Var}(u_N(\bx,\cdot)).
%\end{eqnarray*}}

Using the fact that $\|\phi_i\|_{L^2(D)}=1$ for all $i\in\mathbb{N}$ together with Fubini's theorem, we easily deduce from \eqref{eqn:MS_truncation} the relation
$$\|u-u^N\|_{L^2(\Omega;L^2(D))}^2=\sum_{i=N+1}^{\infty}\lambda_i.$$
Therefore, the mean square error between the random field $u$ and its truncated KL expansion $u^N$ is controlled by the decay of the eigenvalues, and the latter depends on the smoothness of the covariance function. Estimates on the decay of the eigenvalues can be found for instance in \cite{FST2005}.

The truncated KL expansion \eqref{eqn:truncated_KL} is the one minimizing the mean square error in the sense that (cf. \cite{GS1991})
$$
     \begin{aligned}
          \{(\sqrt{\lambda_i}\xi_i & ,\phi_i)\}_{i=1}^N                                                                                                                                                                      \\
                                   & = \argmin_{\{(\zeta_i,\psi_i)\}_{i=1}^N, \int_D\psi_i\psi_j=\delta_{ij}}\mathbb{E}\left[\int_D\left(u(\bx,\cdot)-\bar u(\bx)-\sum_{i=1}^N\psi_i(\bx)\zeta_i(\cdot)\right)^2d\bx\right].
     \end{aligned}
$$

%Analytical solutions of the eigenvalue problems \eqref{def:EVP_KL} are known for some specific cases only, see for instance \cite{KL2010,LPS2014}. However, in general, one needs to perform some discretization and compute an approximate solution. An efficient (finite element based) algorithm for computing an approximation of the truncated KL expansion can be found for instance in \cite{ST2006}. Note that in practice, when considering the KL representation KL \eqref{eqn:KL}, we always need to truncate the infinite series, i.e., to retain only a finite number of random variables (this is usually referred to as the \emph{finite dimensional noise assumption} in the random PDEs literature).

Finally, note that the KL representation \eqref{eqn:KL} of $u$ only requires the knowledge of the mean field and the covariance function, i.e. of the first two moments of $u$. From a modeling point of view, random fields can be generated by prescribing functions $\bar u$ and ${\rm cov}_u$ and building the representation \eqref{eqn:KL} or \eqref{eqn:truncated_KL}. However, the difficulty lies in the fact that not any function ${\rm cov}_u$ is admissible, in particular it needs to be positive semi-definite. We will consider in the next section a specific family of admissible covariance functions, see \eqref{def:Matern} below, and we refer to \cite{DR2007} for other classes of functions.

\subsubsection{Gaussian Random Fields}\label{ss:grf}
It is well-known that if $u$ is a GRF\footnote{We say that $u$ is a Gaussian random field if the random vector $(u(\bx_1),u(\bx_2),\ldots,u(\bx_M))$ follows a multivariate Gaussian distribution for any $\bx_1,\bx_2,\ldots,\bx_M\in D$ and any $M\in\mathbb{N}$.} then $u$ is fully determined by its mean and covariance functions \cite{LPS2014}. Moreover, the KL expansion of a GRF is given by \eqref{eqn:KL}, where the $\xi_i$ are pairwise independent and follow a normal distribution, i.e. $\xi_i\stackrel{i.i.d.}{\sim}\mathcal{N}(0,1)$.

In particular, we say that a GRF belongs to the Mat\'ern family \cite{M1960} if the covariance function reads
\begin{equation} \label{def:Matern}
     {\rm cov_u}(\bx,\bx') = \frac{\sigma^2}{2^{\nu-1}\Gamma(\nu)}\left(\kappa|\bx-\bx'|\right)^{\nu}K_{\nu}\left(\kappa|\bx-\bx'|\right), % \quad \nu\ge 1/2
\end{equation}
where $\sigma^2$ is the marginal variance, $K_{\nu}$ is the modified Bessel function of the second kind of order $\nu$, $\Gamma$ is the gamma function, and $\kappa$ and $\nu$ are positive parameters controlling the spatial correlation range and the smoothness of ${\rm cov}_u$, respectively. Specifically, the underlying random field $u$ is $\lceil\nu\rceil-1$ mean square times differentiable. The covariance function \eqref{def:Matern} is usually re-parameterized using, for instance, $\kappa=\sqrt{2\nu}/l_c$ with $l_c$ the correlation length. Processes with covariance function as in \eqref{def:Matern} have been used in many applications such as machine learning \cite{RW2006} and spatial statistics \cite{S1999}, in particular in geostatistics \cite{DR2007}. Note that for $\nu=1/2$, the covariance function reduces to the exponential
\begin{equation}\label{former}
     {\rm cov}_u(\bx,\bx')=\sigma^2\exp\left(-\frac{|\bx-\bx'|}{l_c}\right)
\end{equation}
while in the limit $\nu\rightarrow\infty$ we get the squared exponential
\begin{equation}\label{latter}
     {\rm cov}_u(\bx,\bx')=\sigma^2\exp\left(-\frac{|\bx-\bx'|^2}{2l_c^2}\right).
\end{equation}
When $D$ is convex, the covariance function \eqref{former} is only Lipschitz continuous and the realizations of the underlying random field $u$ are H\"older continuous with parameter $\alpha<1/2$, namely $u(\cdot,\omega)\in C^{0,\alpha}(\bar D)$ $\mathbb{P}$-almost surely. Instead, we have ${\rm cov}_u\in C^{\infty}(\bar D\times \bar D)$ and $u(\cdot,\omega)\in C^{\infty}(\bar D)$ $\mathbb{P}$-almost surely when the covariance function is given by \eqref{latter}, see \cite{NT2015}.

It was observed in \cite{W1954,W1963} that the stationary (mean zero) solution $u:\mathbb{R}^n\times \Omega\rightarrow\mathbb{R}$ to the SPDE
\begin{equation} \label{def:SPDE_Matern}
     (\kappa^2I-\Delta)^su = w, \quad \mbox{in } \mathbb{R}^n, \,\, \mathbb{P}\mbox{-almost surely},
\end{equation}
has the covariance function \eqref{def:Matern} with smoothness parameter $\nu=2s-n/2$ and marginal variance $\sigma^2=\Gamma(\nu)\Gamma(\nu+n/2)^{-1}(4\pi)^{-n/2}\kappa^{-2\nu}$. Here, $I$ denotes the identity operator, $\Delta$ is the Laplacian operator, and $w$ is Gaussian white noise with unit variance. That is, $w$ is a (so-called generalized) Gaussian random field satisfying
% Recall that a Gaussian white noise $w$ on $D$ is a (generalized) Gaussian random field satisfying
$$
\mathbb{E}\left[(w,\varphi)_{L^2(\mathbb{R}^n)}\right]=0 \qquad \mbox{and} \qquad {\rm Cov}\left((w,\varphi)_{L^2(\mathbb{R}^n)},(w,\psi)_{L^2(\mathbb{R}^n)}\right)=(\varphi,\psi)_{L^2(\mathbb{R}^n)}
$$
for all $\varphi,\psi\in L^2(\mathbb{R}^n)$, where $(\cdot,\cdot)_{L^2(\mathbb{R}^n)}$ stands for the $L^2$ inner product in $\mathbb R^n$, see for instance \cite{LBR2022}.

% In \eqref{def:SPDE_Matern}
% \footnote{\WL{Gaussian white noise on $D\subseteq \mathbb{R}^n$ is a so-called generalized random field and we refer to \cite[Appendix B.2]{LBR2022} for a definition. It belongs to $L^2(\Omega;H^{-r}(D))$ for any $r>n/2$, see \cite{bolin2020numerical}.}}

% \DG{say: characterized by $\mathbb{E}[\langle \omega,f\rangle]=0$ and ${\rm cov}(\langle\omega,f\rangle,\langle \omega,g\rangle)=\langle f,g\rangle$? Add reference to \cite{LPS2014}?}

Instead of using a (truncated) KL expansion to simulate a GRF with covariance function as in \eqref{def:Matern}, we can thus solve the SPDE \eqref{def:SPDE_Matern}. Not only this latter approach does not require the knowledge of the eigenfunctions of the covariance operator, but it also permits several generalizations such as random fields on a bounded domain or non-stationary random fields; we refer to \cite{lindgren2011explicit} for the case $2s\in\mathbb{N}$ and to \cite{bolin2020numerical,BK2020} for the general case $s>n/4$.

\begin{remark} \label{r:recursion}
     In the case $s\ge 1$, namely $s=m+\bar s$ with $m\in\mathbb{N}$ and $\bar s\in[0,1)$, the solution to \eqref{def:SPDE_Matern} can be computed recursively as described in \cite{jansson2021surface}: first solve the $m$ non-fractional SPDEs $\mathcal{L}u^i=u^{i-1}$ for $i=1,\ldots,m$, where $\mathcal{L}:=(\kappa^2I-\Delta)$ and $u^0=w$, and then solve the fractional SPDE $\mathcal{L}^{\bar s}u=u^m$.

     % \textcolor{black}{Strictly speaking $k\in\mathbb{N}$ but we write $k\in\mathbb{N}_0$ so that it can be applied for any $\beta>0$.}

     %\begin{algorithm}[htbp]
     %          \begin{algorithmic}[1]
     %               \STATE \emph{Input}: $\beta\in\mathbb{R}_+$, differential operator $L$, right-hand side $f$;
     %               \STATE \emph{Initialization}: write $\beta=k+s$ with $k\in\mathbb{N}_0$ and $s\in[0,1)$, and set $u^0=w$;
     %               \FOR {$i=1,\ldots,k$}
     %               \STATE solve $Lu^i=u^{i-1}$;
     %               \ENDFOR
     %               \IF {$s>0$}
     %               \STATE solve $L^su=u^k$;
     %               \ELSE
     %              \STATE set $u=u^k$;
     %               \ENDIF
     %               \STATE \emph{Output}: $u$
     %         \end{algorithmic}
     %          \caption{Recursive algorithm for solving $L^{\beta}u=f$, $\beta\in\mathbb{R}_+$.} \label{algo:recursion}
     %\end{algorithm}
     % This algorithm requires the solve of $k$ non-fractional PDEs (same operator, different right-hand sides) and one fractional PDE with fractional power $s$.
\end{remark}

\subsection{Random Fields on Surfaces}
The advantage of the SPDE approach is that it can be straightforwardly generalized to random fields on surfaces \cite{BTMD2020}, while this is not the case when starting from the mean field and covariance function. For instance, replacing the Euclidean distance $|\bx-\bx'|$ in \eqref{def:Matern} by the geodesic distance between $\bx$ and $\bx'$ does not yield an admissible covariance function as it fails to be positive semi-definite, see e.g. \cite{FLH2015}.

In this work, we are thus concerned with the numerical approximation of the following SPDE: find $\widetilde u$ defined on the closed hypersurface $\gamma\subset \mathbb R^n$ ($n=2,3$) such that $\mathbb E[ \widetilde u] = 0$ almost everywhere on $\gamma$, and satisfies
\begin{equation}\label{e:spde}
     (\kappa^2 I- \Delta_\gamma)^s \widetilde u = \widetilde{w},\quad \text{on } \gamma, \,\, \mathbb{P}\mbox{-almost surely},
\end{equation}
where $I$ denotes the identity operator, $\Delta_\gamma$ is the Laplace--Beltrami operator, $\kappa$ is a positive constant, and $\widetilde{w}$ denotes Gaussian white noise with unit variance. The meaning of the fractional power $s>\frac{n-1}{4}$ of the elliptic operator $L:=\kappa^2I-\Delta_\gamma$ will be made precise in the next section.
In statistics, the above equation models GRFs on closed surfaces \cite{lindgren2011explicit}. As above, $s$ indicates the smoothness of the corresponding covariance function while $\kappa$ controls the correlation range \cite{bolin2020numerical}.

In what follows, we shall restrict the fractional power to the range $s\in (\tfrac{n-1}4, 1)$, and we mention that the recursive strategy described in Remark \ref{r:recursion} can be used when $s\ge 1$. We also write $a \lesssim b$ when $a \leq C b$, where $C$ is a positive constant which does not depend on $a$, $b$, or the discretization parameters. We say $a \sim b$ if  $a\lesssim b$ and $b\lesssim a$. We will use the notation $\widetilde .$ for quantities defined on $\gamma$ and discrete functions in space are denoted with capital letters. Finally, all the equations involving random variables or random fields are to be understood in the $\mathbb{P}$-almost surely sense.

\section{Dunford--Taylor Formulation of \texorpdfstring{\eqref{e:spde}}{}}\label{s:model}

The central SPDE~\eqref{e:spde} involves fractional powers of differential operators on surfaces.
In this section, we make these notions precise and justify an integral representation of the solution to \eqref{e:spde}.
The latter is critical for the design and analysis of the numerical algorithm proposed in Section~\ref{s:simulation}.

\subsection{Differential Operators on Surfaces}\label{ss:LB}
We assume that $\gamma$ is a closed and compact hypersurface in $\mathbb{R}^n$ ($n=2,3$) of class $C^3$.
The signed distance function to $\gamma$ is denoted $d:\mathbb R^n \rightarrow \mathbb R$. The regularity of $\gamma$ translates into the regularity of $d$, that is $d \in C^3(\mathcal N)$, where $\mathcal N$ is a tubular neighborhood of $\gamma$ of width depending on the curvatures of $\gamma$, see \cite{delfour2011shapes}.
Note that $\nabla d|_{\gamma}$ is normal to $\gamma$ and thus $\nabla d$ is an extension to $\mathcal N$ of the normal to the surface $\gamma$. With this at hand, one can define the orthogonal projection operator $\mathbf P: \mathcal N \rightarrow \gamma$ by
\begin{equation}\label{e:orthogonal}
     \mathbf P(\bx) := \bx - d(\bx) \nabla d(\bx), \qquad \bx \in \mathcal N,
\end{equation}
which is instrumental to define normal extensions of quantities defined on $\gamma$.
More precisely, for $\widetilde v:\gamma \rightarrow \mathbb R$, the extension is given by $\widetilde v\circ \bP$. By construction, these extensions are constant in the normal direction $\nabla d$.

The surface gradient (or tangential gradient) on $\gamma$ is the tangential component of the regular gradient defined on $\mathcal N$, i.e. $$
 \nabla_\gamma \widetilde v := (I-\nabla d \otimes \nabla d) \nabla v|_{\gamma} : \gamma \rightarrow \mathbb R^n, \quad v := \widetilde v\circ\mathbf P.
     % \nabla_\gamma \widetilde v := (I-\nabla d \otimes \nabla d) \nabla \WL{(\widetilde v\circ\mathbf P)}|_{\gamma} : \gamma \rightarrow \mathbb R^n.
     %      \nabla_\gamma \widetilde v := (I-\nabla d \otimes \nabla d) \nabla \mathbf P\widetilde v|_{\gamma} : \gamma \rightarrow \mathbb R^n.
$$
The components of the surface gradient $\nabla_\gamma$ are denoted $D_i$, $i=1,\ldots,n$ so that when applied to a vector-valued function $\widetilde \bv =(\widetilde v_1,\ldots,\widetilde v_n): \gamma \rightarrow \mathbb R^n$, the surface gradient reads
$$
     \nabla_\gamma \widetilde \bv := ( D_j \widetilde v_i)_{i,j=1}^n.
$$
Furthermore, the surface divergence and Laplace--Beltrami operator are given for $\widetilde \bv:\gamma \rightarrow \mathbb R^n$ and $\widetilde v: \gamma \rightarrow \mathbb R$ by, respectively,
$$
     \textrm{div}_\gamma \widetilde \bv:= \textrm{trace}(\nabla_\gamma \widetilde \bv), \qquad  \Delta_\gamma \widetilde v := \textrm{div}_\gamma (\nabla_\gamma \widetilde v).
$$

The functional space $L^2(\gamma)$ stands for the space of measurable and square integrable functions on $\gamma$.
It is a Hilbert space with scalar product and norm
$$
     (\widetilde v,\widetilde w)_\gamma := \int_\gamma \widetilde v \ \widetilde w, \qquad  \| \widetilde v \|^2_{L^2(\gamma)}:= (\widetilde v,\widetilde v)_\gamma.
$$
We also define the Sobolev spaces
$$
     H^1(\gamma):= \left\lbrace \widetilde v \in L^2(\gamma) : |\nabla_\gamma \widetilde v| \in L^2(\gamma) \right\rbrace
$$
endowed with the norm
$$
     \| \widetilde v \|_{H^1(\gamma)}^2 := \| \widetilde v \|_{L^2(\gamma)}^2 + \| | \nabla_\gamma \widetilde v |\|_{L^2(\gamma)}^2
$$
and
$$
     H^2(\gamma):= \left\lbrace \widetilde v \in H^1(\gamma) : D_iD_j  \widetilde v \in L^2(\gamma), \quad i,j=1,\ldots,n \right\rbrace
$$
with
$$
     \| \widetilde v \|_{H^2(\gamma)}^2 := \| \widetilde v \|_{H^1(\gamma)}^2 + \sum_{i,j=1}^n \|  D_iD_j \widetilde v \|_{L^2(\gamma)}^2.
$$
Their dual spaces are denoted $H^{-1}(\gamma)$ and $H^{-2}(\gamma)$, respectively, and $\langle \cdot,\cdot \rangle_{H^l(\gamma)}$ stands for the duality product, $l=1,2$.

\subsection{Solution Operator}
We are now in a position to define a shifted Laplace--Beltrami problem associated with the SPDE~\eqref{e:spde}, which formally corresponds to taking $s=1$.
Given $\kappa>0$ and $\widetilde f \in H^{-1}(\gamma)$, we are interested in $\tilde u \in H^1(\gamma)$ satisfying
$$
     \kappa^2 \widetilde u -\Delta_\gamma \widetilde u = \widetilde f
$$
in a weak sense. More precisely, we seek $\widetilde u\in H^1(\gamma)$ satisfying
\begin{equation}\label{e:dirichlet}
     a_\gamma(\widetilde u,\widetilde v):=  \kappa^2\int_\gamma \widetilde u\widetilde v + \int_\gamma \nabla_\gamma \widetilde u \cdot \nabla_\gamma \widetilde  v =  \langle \widetilde f ,\widetilde v \rangle_{H^1(\gamma)}, \Forall \widetilde v\in H^1(\gamma).
\end{equation}
The Lax-Milgram theory implies that the above problem admits a unique solution and we denote by $T: H^{-1}(\gamma)\to H^1(\gamma)$ the solution operator $T\widetilde f:=\widetilde u$.
Because $\gamma$ is of class $C^3$, $T$ is an isomorphism from $L^2(\gamma)$ to $H^2(\gamma)$; see \cite[Theorem~3.3]{Dziuk13} and \cite[Lemma~3]{bonito2020finite}. Whence, its inverse $L := (\kappa^2I-\Delta_\gamma)=T^{-1} $ is defined on $H^2(\gamma)$.

\subsection{Fractional Powers of Elliptic Operators} \label{s:dotted}
The operator $T : H^{-1}(\gamma)\to H^1(\gamma)$ is compact thanks to the compact embedding $H^1(\gamma)\subset H^{-1}(\gamma)$.
Thus, the Fredholm alternative guarantees the existence of an $L^2(\gamma)$ orthonormal basis of eigenfunctions $\{\widetilde\psi_j\}_{j=1}^\infty$ of its inverse $L$ with non-decreasing positive eigenvalues $\kappa^2 =\widetilde\lambda_1 < \widetilde\lambda_2\le \widetilde\lambda_3\le \ldots$. That is, the eigenpair $(\widetilde\lambda_j,\widetilde\psi_j) \in \mathbb R^+ \times H^1(\gamma)$, $j=1,2,\ldots$,  satisfies
\begin{equation} \label{e:eigenpb}
     a_{\gamma}(\widetilde\psi_j,\widetilde v) =  \widetilde\lambda_j\int_{\gamma}\widetilde \psi_j \widetilde v, \Forall \widetilde v\in H^1(\gamma)
\end{equation}
with the eigenfunctions chosen so that
% and for all $i,j=1,2,...$, there holds
\begin{equation*}
     (\widetilde \psi_j, \widetilde \psi_i)_\gamma = \delta_{ij}, \quad \mbox{for all } i,j=1,2,\ldots.
\end{equation*}

In order to define fractional powers of the shifted Laplace--Beltrami operator $L=\kappa^2I-\Delta_{\gamma}$, we shall define the so-called dotted space as follows. The dotted space $\dH^r(\gamma)$ for $r\geq 0$ is the set of functions $\tilde v \in L^2(\gamma)$ such that
\begin{equation}\label{e:inter-def}
     \|\widetilde v\|_{\dH^r(\gamma)}^2:=\sum_{j=1}^\infty \widetilde \lambda_j^r| (\widetilde v,\widetilde \psi_j)_\gamma|^2 < \infty.
\end{equation}
For negative indices, we define
\[
     \dH^{-r}(\gamma):=    \bigg\{ \bigg\langle \sum_{j=1}^\infty c_j\widetilde\psi_j,\cdot \bigg\rangle_{\dH^r(\gamma)}
     : \ \{c_j\}_{j=1}^\infty \subset \mathbb R, \ \sum_{j=1}^\infty|c_j|^2\widetilde \lambda_j^{-r} < \infty \bigg\}, \quad r > 0.
\]
Here we have identified the duality product between $\dH^{-r}(\gamma)$ and
$\dH^r(\gamma)$ with the $L^2(\gamma)$ scalar product and set
$$
     \bigg\langle \sum_{j=1}^\infty c_j\widetilde\psi_j, \sum_{j=1}^\infty d_j\widetilde\psi_j \bigg\rangle_{\dH^r(\gamma)} := \sum_{j=1}^\infty c_j d_j, \qquad
     \text{for} \qquad \sum_{j=1}^\infty d_j\widetilde\psi_j \in \dH^r(\gamma) .
$$
For $r\in [0,1]$, the dotted spaces $\dH^{4r-2}(\gamma)$ are equivalent to the interpolation spaces $H^{4r-2}(\gamma):=(H^{-2}(\gamma),H^{2}(\gamma))_{r,2}$ obtained by the real interpolation method, see \cite{bonito2015numerical}.
To simplify the presentation, we will frequently use this equivalence, typically to estimate the $H^r(\gamma)$ norm of a function by the $\dH^r(\gamma)$ norm defined in \eqref{e:inter-def}. Moreover, we will simply write the duality pairing with $\langle \cdot,\cdot \rangle$ in what follows.
% \DGc{I removed the sentence ``we will often use the norm  \eqref{e:inter-def} in place of the $H^r(\gamma)$ norm without mentioning it and ''. Is it OK?}

The fractional powers $L^\beta$, $\beta \in (-1,1)$, of $L$ are defined on $\dH^{r}(\gamma)$ using the eigenpairs of $L$. For  $\widetilde v= \langle\sum_{j=1}^\infty v_j\widetilde\psi_j,\cdot\rangle \in \dH^r(\gamma)$ we define
\begin{equation}\label{e:spectral}
     L^\beta \widetilde v := \bigg\langle\sum_{j=1}^\infty \widetilde\lambda_j^ \beta v_j\widetilde\psi_j,\cdot\bigg\rangle.
\end{equation}
We recall that we have identified the duality product between $\dH^{-r}(\gamma)$ and $\dH^r(\gamma)$ with the $L^2(\gamma)$ scalar product so that when $r\geq 0$ we have
\begin{equation}\label{e:spectral2}
     L^\beta \widetilde v = \sum_{j=1}^\infty \widetilde\lambda_j^\beta v_j\widetilde\psi_j \qquad \textrm{with} \quad v_j := (\widetilde v,\widetilde \psi_j)_\gamma.
\end{equation}
Invoking the Lax-Milgram theory again, we see that $L^\beta:\dH^r(\gamma) \rightarrow \dH^{r-2\beta}(\gamma)$ is an isomorphism and in view of the equivalence of spaces mentioned above,
\begin{equation} \label{eqn:iso}
     L^\beta: H^r(\gamma) \rightarrow H^{r-2\beta}(\gamma) \quad \mbox{is an isomorphism}
\end{equation}
for $\beta\in(-1,1)$ provided that
$$\max\left\{-2,2\beta-2\right\}\le r\le \min\left\{2,2\beta+2\right\}.$$

% NEEDED?
% Given a Lipschitz hypersurface $\mathcal S$, we denote $L^2(\mathcal S)$ the space of square integrable functions on $\mathcal S$ and $(.,.)_{\mathcal S}$ the associated inner product.
% We use the notation $H^r(\mathcal S)$ for $r>0$ to denote the classical Sobolev spaces.
% For $s\in [0, 2]$, we set $H^s(\mathcal S):= [L^2(\mathcal S),H^2(\mathcal S)]_{s/2,2}$ to be the real interpolation.
% For $s\in [0,2]$, we denote by $H^{-s}(\mathcal S)$ the dual space of $H^s(\mathcal S)$ and write
% $\langle .,. \rangle_{H^s(\mathcal S)}$ the corresponding duality pairing.
% We will identify $f \in L^2(\mathcal S)$ as a functional in $H^{-1}(\mathcal S)$ with $(f,.)_{\mathcal S}$. 

\subsection{Regularity of the White Noise \texorpdfstring{$\widetilde{w}$}{} and the Solution \texorpdfstring{$\widetilde u$}{}}\label{ss:w-reg}
We have established mapping properties of the fractional powers of $L$ in the previous section.
To justify the SPDE~\eqref{e:spde}, it remains to determine the regularity of the white noise $\widetilde{w}$.

Recall that $(\Omega, \mathcal F,  \mathbb P)$ is a complete probability space. We briefly sketch the proof of \cite[Proposition~2.3]{bolin2020numerical} which guarantees that for $r > \frac{n-1}{2}$, $\widetilde{w}$ satisfies
\begin{equation}\label{e:gn_reg}
     \|\widetilde{w}\|_{L^2(\Omega; H^{-r}(\gamma))}^2=\mathbb E\big[\| \widetilde w \|_{H^{-r}(\gamma)}^2\big]<\infty,
\end{equation}
i.e. $\widetilde{w} \in L^2(\Omega;H^{-r}(\gamma))$. 
%\textcolor{cyan}{DG: add that we then have $\widetilde{w}\in H^{-r}(\gamma)$ $\mathbb{P}$-almost surely?}
Consider the formal KL expansion of $\widetilde w$ with respect to the orthonormal eigenbasis $\{\widetilde\psi_j\}_{j=1}^\infty$ and write
\begin{equation}\label{e:white_noise}
     \widetilde{w} = \big\langle \sum_{j=1}^\infty \xi_j \widetilde\psi_j,\cdot \big\rangle,
\end{equation}
where $\xi_j$ are pairwise independent real-valued standard normally distributed random variables, i.e. $\xi_j\stackrel{i.i.d.}{\sim}\mathcal{N}(0,1)$.
Weyl's law (see e.g. \cite{ivrii2016100}) characterizes the asymptotic behavior of the eigenvalues
\begin{equation}\label{i:weyl}
     \widetilde \lambda_j \sim j^{\frac{2}{n-1}}\quad\text{for } j=1,2,\ldots .
\end{equation}
Whence, for $r>\tfrac{n-1}2$ there holds
\begin{equation}\label{i:w-finite-sum}
     \|\widetilde{w}\|_{L^2(\Omega; H^{-r}(\gamma))}^2
     \sim \sum_{j=1}^{\infty} j^{-\tfrac{2r}{n-1}} \lesssim  \frac{r}{r-(n-1)/2},
\end{equation}
which guarantees that $\widetilde{w} \in L^2(\Omega;H^{-r}(\gamma))$ as anticipated.

We can now link this regularity to the regularity of the solution to the SPDE~\eqref{e:spde} thanks to the isomorphic property \eqref{eqn:iso} of $L^{\beta}$.
Indeed, for $\beta \in (0,1)$ and $\frac{n-1}{2}<r \leq 2$, we have $L^{-\beta}\widetilde w\in H^{2\beta-r}(\gamma)$ $\mathbb{P}$-almost surely.
Moreover, \eqref{e:gn_reg} implies
\begin{equation*}
     \|L^{-\beta}\widetilde{w}\|_{L^2(\Omega; H^{2\beta-r}(\gamma))} \lesssim \|\widetilde{w}\|_{L^2(\Omega;H^{-r}(\gamma))} < \infty
\end{equation*}
so that
\begin{equation}\label{e:reg_sol_generic}
     L^{-\beta}\widetilde w \in L^2(\Omega;H^{2\beta-r}(\gamma))
\end{equation}
and in particular, for $s\in (0,1)$ there exists a unique solution $\widetilde u =L^{-s}\widetilde w$ to \eqref{e:spde} in $L^2(\Omega;H^{2s-r}(\gamma))$.

We end this section by recalling our convention: expressions relating functions from $\Omega$ to a Banach space are always understood $\mathbb P$-almost surely.

\subsection{Dunford--Taylor Integral Representation}
The regularity \eqref{e:reg_sol_generic} implies that restricting $s \in(\tfrac{n-1}{4},1)$ the solution $\widetilde u = L^{-s}\widetilde w$ to our model problem \eqref{e:spde} belongs to $L^2(\Omega;H^{2s-r}(\gamma))\subset L^2(\Omega;L^2(\gamma))$ for any $r\in\left(\tfrac{n-1}{2},2s\right)$.
Under these assumptions, we can proceed as in \cite{bonito2015numerical} and resort to a Dunford--Taylor integral representation of $\widetilde u$ to design a numerical algorithm for its approximation. The next proposition justifies the integral representation \eqref{eqn:integral_repres} by extending \cite[Theorem~2.1]{bonito2015numerical}.

\begin{proposition}\label{p:bochner}
     Let $\widetilde w$ be a Gaussian white noise.  For $\tfrac{n-1}{4}<s<1$, the expression
     \begin{equation}\label{e:integ_piece}
          I(\widetilde w):= \frac{\sin(\pi s)}{\pi}\int_0^\infty \mu^{-s}(\mu I + L)^{-1}\widetilde w \, d\mu
     \end{equation}
     is in $L^2(\Omega; L^2(\gamma))$ and coincides with $L^{-s} \widetilde w$, i.e.
     \[
          L^{-s}\widetilde w  = \frac{\sin(\pi s)}{\pi}\int_0^\infty \mu^{-s}(\mu I + L)^{-1}\widetilde w \, d\mu  \qquad \textrm{in} \quad L^2(\Omega;L^2(\gamma)).
     \]
\end{proposition}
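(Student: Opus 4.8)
The plan is to reduce the operator identity to a scalar one evaluated in the $L^2(\gamma)$-orthonormal eigenbasis $\{\widetilde\psi_j\}_{j=1}^\infty$ of $L$, once I have shown that the improper integral in \eqref{e:integ_piece} converges as an absolutely convergent Bochner integral in $X:=L^2(\Omega;L^2(\gamma))$. The scalar ingredient is the Balakrishnan identity
\[
\frac{\sin(\pi s)}{\pi}\int_0^\infty \frac{\mu^{-s}}{\mu+\lambda}\,d\mu = \lambda^{-s}, \qquad \lambda>0,\ s\in(0,1),
\]
obtained from the substitution $\mu=\lambda t$ together with the Beta integral $\int_0^\infty t^{-s}(1+t)^{-1}\,dt=\pi/\sin(\pi s)$. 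Since $L^{\beta}$ acts diagonally on the eigenbasis by \eqref{e:spectral2}, this identity is exactly what will convert the integral into $L^{-s}$ mode by mode.

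Establishing Bochner integrability is the main obstacle, and it is precisely here that the range $\tfrac{n-1}{4}<s<1$ is used. Writing the white noise as in \eqref{e:white_noise} and using $(\mu I+L)^{-1}\widetilde\psi_j=(\mu+\widetilde\lambda_j)^{-1}\widetilde\psi_j$, I first compute
\[
\big\|(\mu I+L)^{-1}\widetilde w\big\|_X^2=\mathbb E\Big[\sum_{j=1}^\infty \frac{\xi_j^2}{(\mu+\widetilde\lambda_j)^2}\Big]=\sum_{j=1}^\infty \frac{1}{(\mu+\widetilde\lambda_j)^2}.
\]
Weyl's law \eqref{i:weyl} gives $\widetilde\lambda_j\sim j^{2/(n-1)}$, so comparing the sum with $\int_0^\infty (\mu+x^{2/(n-1)})^{-2}\,dx$ (finite for $n=2,3$) yields $\sum_{j}(\mu+\widetilde\lambda_j)^{-2}\lesssim \mu^{(n-1)/2-2}$ for large $\mu$ and a finite bound as $\mu\to 0^+$. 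Consequently $\|(\mu I+L)^{-1}\widetilde w\|_X\lesssim 1$ near $\mu=0$ and $\lesssim \mu^{(n-1)/4-1}$ for $\mu\ge 1$, whence
\[
\int_0^\infty \mu^{-s}\big\|(\mu I+L)^{-1}\widetilde w\big\|_X\,d\mu \lesssim \int_0^1 \mu^{-s}\,d\mu + \int_1^\infty \mu^{-s+(n-1)/4-1}\,d\mu.
\]
The first integral is finite because $s<1$ tames the singularity at the origin, while the second is finite because $s>\tfrac{n-1}{4}$ makes its exponent strictly less than $-1$. Together with strong measurability of $\mu\mapsto \mu^{-s}(\mu I+L)^{-1}\widetilde w$ (which follows from separability of $X$ and continuity of the integrand), this proves $I(\widetilde w)\in X$.

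It then remains to identify $I(\widetilde w)$ with $L^{-s}\widetilde w$. Because the Bochner integral commutes with the bounded linear functional $v\mapsto (v,\widetilde\psi_k)_\gamma$ on $L^2(\gamma)$, I may compute the $k$-th coefficient and apply the scalar identity with $\lambda=\widetilde\lambda_k$:
\[
\big(I(\widetilde w),\widetilde\psi_k\big)_\gamma=\frac{\sin(\pi s)}{\pi}\int_0^\infty \frac{\mu^{-s}\,\xi_k}{\mu+\widetilde\lambda_k}\,d\mu=\widetilde\lambda_k^{-s}\xi_k=\big(L^{-s}\widetilde w,\widetilde\psi_k\big)_\gamma,
\]
the last equality following from the spectral definition \eqref{e:spectral2}. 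Since $\{\widetilde\psi_k\}_{k=1}^\infty$ is an orthonormal basis of $L^2(\gamma)$ and both $I(\widetilde w)$ and $L^{-s}\widetilde w$ belong to $X$, equality of all Fourier coefficients yields $I(\widetilde w)=L^{-s}\widetilde w$ in $X$, as claimed. The one point deserving care beyond the integrability estimate is the justification of exchanging the Bochner integral with the coefficient functional and, implicitly, with the expectation; both are legitimate once absolute convergence in $X$ is in hand.
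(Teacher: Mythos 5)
Your proof is correct, and it reaches the conclusion by a genuinely different route than the paper in both of its two steps. For the integrability, the paper first proves a \emph{deterministic} bound, namely that $\widetilde v\mapsto \int_0^\infty \mu^{-s}(\mu I+L)^{-1}\widetilde v\,d\mu$ is bounded from $H^{-r}(\gamma)$ to $L^2(\gamma)$ for $r=\tfrac{n-1}{2}+\epsilon$ with $\tfrac r2<s$ (via the resolvent bound $\|(\mu I+L)^{-1}\widetilde v\|_{L^2(\gamma)}\lesssim \min\{1,\mu^{r/2-1}\}\|\widetilde v\|_{H^{-r}(\gamma)}$), and then applies it $\omega$-wise to $\widetilde w\in L^2(\Omega;H^{-r}(\gamma))$; this settles measurability and yields the estimate \eqref{e:l2_I}, which is reused later (it is essentially the content of Lemma~\ref{l:sinc-stability}). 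You instead exploit the covariance structure of white noise directly, computing $\|(\mu I+L)^{-1}\widetilde w\|_{L^2(\Omega;L^2(\gamma))}^2=\sum_j(\mu+\widetilde\lambda_j)^{-2}$ and invoking Weyl's law, which gives the slightly sharper decay $\mu^{(n-1)/4-1}$ and makes the role of the threshold $s>\tfrac{n-1}{4}$ completely transparent; the price is that the argument is specific to white noise and does not produce the reusable deterministic mapping property. For the identification, the paper applies the scalar Balakrishnan identity to the finite KL truncations $\widetilde w^N$ and passes to the limit using \eqref{eqn:iso}, \eqref{e:l2_I}, and $\|\widetilde w-\widetilde w^N\|_{L^2(\Omega;H^{-r}(\gamma))}\to 0$, whereas you commute the Bochner integral with the coefficient functionals $v\mapsto(v,\widetilde\psi_k)_\gamma$ and match Fourier coefficients; this is cleaner, with the one caveat (which you flag) that the functional should be read as a bounded map $L^2(\Omega;L^2(\gamma))\to L^2(\Omega)$ so that Hille's theorem applies, and that the $X$-valued Bochner integral must be identified a.s.\ with the $\omega$-wise $L^2(\gamma)$-valued integral appearing in \eqref{e:integ_piece}. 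Both approaches are sound.
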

\begin{proof}
     We first show that the mapping
     \begin{equation}\label{e:measurability}
          \Omega \ni \zeta \mapsto \int_0^\infty \mu^{-s} (\mu I+L)^{-1}\widetilde w(\cdot,\zeta) \ d\mu \in L^2(\gamma)
     \end{equation}
     is measurable.
     Recall that \eqref{e:gn_reg} establishes that $\widetilde w\in L^2(\Omega;H^{-r}(\gamma))$ for $r=\tfrac{n-1}{2}+\epsilon$, $\epsilon>0$, and so $\tfrac r 2 < s <1$ for $\epsilon$ sufficiently small. In particular, the mapping $\Omega  \ni \zeta \mapsto \widetilde w(\cdot,\zeta) \in H^{-r}(\gamma)$ is measurable.
     Consequently, the measurability of \eqref{e:measurability} follows from the continuity (or boundedness) of the linear mapping
     \begin{equation}\label{e:map_continuity}
          H^{-r}(\gamma) \ni \widetilde v \mapsto \int_0^\infty \mu^{-s} (\mu I+L)^{-1}\widetilde v \ d\mu \in L^2(\gamma)
     \end{equation}
     discussed now.

     Let $\widetilde v \in H^{-r}(\gamma)$ and set $\widetilde f := L^{-r/2}\widetilde v$ which belongs to $L^2(\gamma)$ thanks to \eqref{eqn:iso}.
     We compute
     $$          \|(\mu I + L)^{-1}\widetilde v\|_{L^2(\gamma)}^2  =
          \|(\mu I + L)^{-1}L^{r/2}\widetilde f\|_{L^2(\gamma)}^2 = \sum_{j=1}^\infty \bigg|\frac{\widetilde \lambda_j^{r/2}}{\mu + \widetilde \lambda_j}\bigg|^2|\tilde f_j|^2,
     $$
     where $\widetilde f_j:=(\widetilde f,\widetilde \psi_j)_{\gamma}$. We estimate the coefficients $\big|\frac{\widetilde \lambda_j^{r/2}}{\mu + \widetilde \lambda_j}\big|$ for $\mu \le 1$ and $\mu >1$ separately.
     When $\mu \le 1$, we recall that $\tfrac r2 < s <1$ and that the eigenvalues are non-decreasingly ordered to deduce that
     \begin{equation}\label{i:r-less-1}
          \frac{\widetilde \lambda_j^{r/2}}{\mu + \widetilde \lambda_j}\lesssim 1,
     \end{equation}
     where the hidden constant depends only on $\widetilde\lambda_1 = \kappa^2$. Instead, when $\mu > 1$, Young's inequality allows us to estimate
     \begin{equation}\label{i:r-larger-1}
          \frac{\widetilde \lambda_j^{r/2}}{\mu + \widetilde \lambda_j} \le \mu^{\tfrac r2 - 1}.
     \end{equation}

     % \DGc{Should we put back equal here? WL: I perfer '$\lesssim$' since it is not the dotted space.}
     Taking into account these two cases and using the fact that $\|\widetilde f\|_{L^2(\gamma)}\lesssim\|\widetilde v\|_{H^{-r}(\gamma)}$, we find that
     \begin{equation}\label{i:bochner-decay}
          \|(\mu I + L)^{-1}\widetilde v\|_{L^2(\gamma)} \lesssim \|\widetilde v\|_{H^{-r}(\gamma)}
          \left\{
          \begin{aligned}
                & 1,                   &  & \text{when } \mu \le 1, \\
                & \mu^{\tfrac r2 - 1}, &  & \text{when } \mu > 1 .
          \end{aligned}
          \right.
     \end{equation}
     Estimating the integral of this quantity separately for $0<\mu \leq 1$ and $\mu>1$, we deduce that
     \begin{equation}\label{e:bochner_estim}
          \left\| \int_0^\infty \mu^{-s} (\mu I + L)^{-1}\widetilde v d\mu \right\|_{L^2(\gamma)} \leq   \int_0^\infty \mu^{-s} \|(\mu I + L)^{-1}\widetilde v\|_{L^2(\gamma)} d\mu \lesssim \| \widetilde v \|_{H^{-r}(\gamma)}
     \end{equation}
     and the continuity of the mapping \eqref{e:map_continuity} follows. In turns this proves the measurability of the mapping \eqref{e:measurability}. The estimate
     \begin{equation}\label{e:l2_I}
          \| I(\widetilde w) \|_{L^2(\Omega;L^2(\gamma))} \lesssim \| \widetilde w \|_{L^2(\Omega;H^{-r}(\gamma))}
     \end{equation}
     is a direct consequence of \eqref{e:bochner_estim}.

     We now show that the Dunford--Taylor integral $I(\widetilde w)$ coincides with the spectral definition \eqref{e:spectral} of $L^{-s}\widetilde w$, namely $L^{-s}\widetilde w = \sum_{j=1}^\infty \widetilde \lambda_j^{-s}\xi_j \widetilde \psi_j$ with $\widetilde w = \big\langle \sum_{j=1}^\infty \xi_j \widetilde \psi_j,\cdot\big\rangle$.
     We only provide a sketch of the proof since the arguments follow those in \cite{Kato61,Balakrishnan60}, see also \cite{bonito2015numerical,BP15}.
     We start by noting that
     \begin{equation}\label{e:dt-scalar}
          \widetilde \lambda_j^{-s} = \frac{\sin(\pi s)}{\pi}\int_0^\infty \mu^{-s}(\mu +\widetilde\lambda_j)^{-1} \, d\mu
     \end{equation}
     so that for the partial sum $\widetilde w^N := \sum_{j=1}^N \xi_j \widetilde\psi_j$, we have $L^{-s} \widetilde w^N = I(\widetilde w^N)$.
     As a consequence, we have
     \begin{equation*}
          \begin{split}
               &\|L^{-s} \widetilde w - I(\widetilde w)\|_{L^2(\Omega;L^2(\gamma))} \\
               &\qquad \leq \| L^{-s} \widetilde w - L^{-s}\widetilde w^N\|_{L^2(\Omega;L^2(\gamma))} + \|I(\widetilde w^N-\widetilde w)\|_{L^2(\Omega;L^2(\gamma))} \\
               & \qquad \lesssim \| \widetilde w - \widetilde w^N \|_{L^2(\Omega;H^{-r}(\gamma))},
          \end{split}
     \end{equation*}
     where we used the mapping properties \eqref{eqn:iso} of $L^{-s}$, $r/2<s$, and estimate \eqref{e:l2_I} to justify the last inequality.
     To estimate the difference between $\widetilde w$ and $\widetilde w^N$, we resort to Weyl's law \eqref{i:weyl} to obtain
     $$
          \| \widetilde w - \widetilde w^N \|_{L^2(\Omega;H^{-r}(\gamma))}^2 \lesssim \sum_{j=N+1}^\infty j^{-\frac{2r}{n-1}} \to 0 \qquad \textrm{as }N\to \infty.
     $$
     Combining the last two inequalities, we conclude that
     $$
          \|L^{-s} \widetilde w - I(\widetilde w)\|_{L^2(\Omega;L^2(\gamma))} = 0
     $$
     as desired.
\end{proof}

\section{Numerical Scheme}\label{s:simulation}

We propose a numerical scheme based on the integral representation

\begin{equation}\label{e:num_int}
     \widetilde u = L^{-s}\widetilde w = \frac{\sin(\pi s)}{\pi}\int_0^\infty \mu^{-s}(\mu I + L)^{-1}\widetilde w \, d\mu
\end{equation}
of the solution to the SPDE~\eqref{e:spde}.

The numerical approximation of $\widetilde u$ consists of two steps.
%  \cite{bolin2020numerical}
Following \cite{herrmann2020multilevel}, we approximate the white noise in a finite element space via projection. Then, the integral representation~\eqref{e:num_int} is approximated using a sinc quadrature combined with a standard finite element method for the approximation of the integrand at each quadrature point.
We refer to \cite{bonito2015numerical,BP15} for the original development and to \cite{bonito2021approximation} for its extension to the fractional Laplace--Beltrami problem on surfaces. The results in \cite{bonito2015numerical,BP15,bonito2021approximation} do not apply in the present stochastic context.

We start with the construction of the finite element space.

\subsection{Finite Element Methods on Discrete Surfaces}\label{ss:fem}

There exist several finite element methods for problems defined on hypersurfaces, see for instance \cite{bonito2020finite} and the references therein. In this work, we consider parametric finite elements \cite{dziuk88} which are defined on an approximated surface $\Gamma$ with the help of the projection $\mathbf P = \bx - d(\bx)\nabla d(\bx)$ mapping $\Gamma$ to $\gamma$ assumed to be $C^3$. Recall that $d$ is the signed distance function and is of class $C^3$, see Section~\ref{ss:LB}. This property guarantees that the geometric inconsistency arising from replacing $\gamma$ by $\Gamma$ does not affect the finite element method convergence rate. For deterministic right-hand sides, this is the subject of Theorem~4.2 in \cite{bonito2021approximation}. The numerical study in \cite{bonito2021approximation} also shows that using a generic continuous piecewise $C^2$ lift \cite{mekchay2011afem,bonito2013afem,bonito2016high} does not affect the convergence rate, noting that such lifting operator only provides the first order convergence for the approximated surface.

Let $\Gamma$ be an $(n-1)$-dimensional polyhedral surface so that the vertices of $\Gamma$ lie on $\gamma$ (see \cite{demlow09} for more general assumptions). The analysis provided below holds for a subdivision made of triangles or quadrilaterals. The former is possibly more standard while the latter is the setting used in the numerical illustrations in Section~\ref{s:numeric} below.  To incorporate both settings simultaneously, we denote by $\mathcal T$ a triangular or quadrilateral subdivision of $\Gamma$. Given a subdivision with triangles (resp. quadrilaterals), let $\hat\tau$ be the unit triangle (resp. square) and denote $\mathcal P$ the set of linear (resp. bi-linear) polynomials defined on $\hat\tau$.

For each $\tau \in \mathcal T$, we set $F_\tau: \widehat \tau \rightarrow \tau$, $F_\tau\in[\mathcal P]^{n-1}$ to be the map from the reference element to the physical element.
We let $c_J:=c_J(\mathcal T)$ be such that
$$
     c_J^{-1} | \mathbf{x} | \leq  | D F_\tau \mathbf{x} | \leq c_J | \mathbf{x}|, \qquad \forall \,\mathbf{x} \in \widehat \tau, \,\, \forall\, \tau \in \mathcal T.
     % \forall \mathbf{x} \in \mathbb R^{n-1}
$$
Furthermore, we denote by $h_\tau$, $\tau\in \mathcal T$, the diameter of $\tau$, $h := \max_{\tau \in \mathcal T} h_{\tau}$, and by $c_q:=c_q(\mathcal T)$ the quasi-uniformity constant satisfying
$$
     h:=\max_{\tau\in \mathcal T} h_{\tau} = c_q \min_{\tau\in \mathcal T} h_{\tau}.
$$
The maximum number of elements sharing the same vertex is
\begin{equation}\label{e:uniform-vertices}
     c_v:=c_v(\mathcal T):=\max_{ \mathbf v \textrm{ vertex of } \mathcal T} \# \{ \tau \in \mathcal T \ : \ \mathbf v \textrm{ is a vertex of } \tau\}.
\end{equation}
The constants appearing in the analysis below (but hidden in ``$\lesssim$'') may depend on $c_J$, $c_q$, and $c_v$ while the dependency on the mesh size $h$ will be explicitly given.
We further assume that the geometry of $\gamma$ is sufficiently well approximated by $\Gamma$, i.e. $h$ is small enough, so that $\Gamma \subset \mathcal N$ and thus the lift $\bP$ defined in \eqref{e:orthogonal} restricted to $\Gamma$ is a bijection. To simplify the notation in what follows, we introduce the operator $P:L^1({\gamma})\rightarrow L^1(\Gamma)$ and its inverse $P^{-1}:L^1({\Gamma})\rightarrow L^1(\gamma)$ defined as follows. For $\widetilde v:\gamma \rightarrow \mathbb R$ we set $(P\widetilde v)(\bx):=(\widetilde v\circ\bP)(\bx)$, $\bx\in\Gamma$, while for $v:\Gamma \rightarrow \mathbb R$ we set $(P^{-1}v)(\bx):=(v\circ\bP^{-1})(\bx)$, $\bx\in\gamma$.

\begin{remark} \label{r:meshes}
     It is possible to construct sequences of subdivisions with uniform constants $c_J$, $c_q$, and $c_v$.
     We refer to \cite{bonito2012convergence} for uniform refinements and to \cite{bonito2013afem,bonito2016high} for local refinements by adaptive algorithms. A typical possibility is to start with an initial polyhedral surface $\overline\Gamma_0$ and consider the sequence $\{\Gamma_i:= \mathcal I_i \bP(\overline{\Gamma}_i)\}_{i=1}^\infty$, where $\{\overline\Gamma_i\}$ consists of uniform refinements and $\mathcal I_i$ is the corresponding nodal interpolant.
\end{remark}
The finite element space associated with $\mathcal T$ is denoted $\mathbb V(\mathcal T)$ and reads
$$
     \mathbb V(\mathcal T):= \{ v \in H^1(\Gamma) \ : \ v|_\tau \circ F_\tau  \in \mathcal P, \qquad \forall \tau\in \mathcal T \}.
$$
Its dimension will be denoted by $N:= \textrm{dim}(\mathbb V(\mathcal T))$.

We can now define the discrete operator $L_{\mathcal T}:= T_{\mathcal T}^{-1}: {\mathbb V}(\mathcal T)\to {\mathbb V}(\mathcal T)$ as the discrete counterpart of $L$. Here $T_{\mathcal T}: \mathbb V(\mathcal T)\to \mathbb V(\mathcal T)$ is defined by $T_{\mathcal T}F:= U$ where $U$ is the unique solution to
\begin{equation}\label{e:discrete_T}
     A_\Gamma(U,V):=\kappa^2 \int_\Gamma U  V + \int_\Gamma \nabla_\Gamma  U\cdot
     \nabla_\Gamma V = \int_\Gamma F  V, \quad \text{ for all }
     V\in {\mathbb V}(\mathcal T);
\end{equation}
compare with \eqref{e:dirichlet}.

We also introduce $\sigma : \Gamma\to \mathbb R^+$ to be the ratio between the area element of $\gamma$ and $\Gamma$ associated with the parametrization $\mathbf P$ so that for $\widetilde v \in L^1(\gamma)$ we have
\begin{equation}\label{e:area}
     \int_\gamma \widetilde v = \int_{\Gamma} \sigma P\widetilde v.
\end{equation}
The area ratio $\sigma$ satisfies
\begin{equation}\label{i:sigma}
     \|\sigma\|_{L^\infty(\Gamma)}\lesssim 1, \quad \|1-\sigma\|_{L^\infty(\Gamma)}\lesssim h^2 \quad  \mbox{and} \quad \|1-\sigma^{-1}\|_{L^\infty(\Gamma)}\lesssim h^2,
\end{equation}
see for instance \cite{bonito2020finite}.

\subsection{Lifted Finite Element Method on \texorpdfstring{$\gamma$}{}}\label{s:FEM_gamma}

At several instances, it will be useful to compare quantities defined on $\gamma$ with their finite element approximation on the geometrically consistent lifted finite element space
$$
     \widetilde{\mathbb V}(\mathcal T) := P^{-1} \mathbb V(\mathcal T):=\{ P^{-1}V \ : \ V \in\mathbb V(\mathcal T) \}.
$$
Of course the dimension of $\widetilde{\mathbb V}(\mathcal T)$ is $\textrm{dim}(\widetilde{\mathbb V}(\mathcal T)) = \textrm{dim}({\mathbb V}(\mathcal T))=N$.

The operator $\widetilde L_{\mathcal T}: \widetilde{\mathbb V}(\mathcal T)\to \widetilde{\mathbb V}(\mathcal T)$ is defined as $L_{\mathcal T}$, but on the exact surface $\gamma$. That is, we set $\widetilde L_{\mathcal T}:= \widetilde T_{\mathcal T}^{-1}: \widetilde {\mathbb V}(\mathcal T)\to \widetilde {\mathbb V}(\mathcal T)$ where $\widetilde T_{\mathcal T}: \widetilde {\mathbb V}(\mathcal T)\to \widetilde {\mathbb V}(\mathcal T)$ is defined by $\widetilde T_{\mathcal T}\widetilde F:= \widetilde U$ with $\widetilde U$ uniquely solving
\begin{equation}\label{e:tilde-L}
     \kappa^2 \int_\gamma \widetilde U  \widetilde   V + \int_\gamma \nabla_\gamma  \widetilde U  \cdot
     \nabla_\gamma \widetilde V = \int_\gamma \widetilde F  \widetilde V, \quad \text{ for all }
     \widetilde V\in \widetilde {\mathbb V}(\mathcal T).
\end{equation}

We denote by $\widetilde{\mathbf\Pi}: L^2(\gamma) \rightarrow \widetilde{\mathbb V}(\mathcal T)$ the $L^2(\gamma)$ projection onto $\widetilde{\mathbb V}(\mathcal T)$. It is defined for $\widetilde v \in L^2(\gamma)$ by the relations
\begin{equation}\label{e:L2_conforming}
     \int_\gamma (\widetilde{\mathbf\Pi} \widetilde v - \widetilde v) \widetilde V = 0,\quad\text{ for all } \widetilde V \in \widetilde{\mathbb V}(\mathcal T).
\end{equation}
The Bramble-Hilbert lemma guarantees the approximation property
\begin{equation}\label{e:error_l2_g}
     \|(I-\widetilde{\mathbf\Pi}) \widetilde v\|_{L^2(\gamma)} +h\|(I-\widetilde{\mathbf\Pi}) \widetilde v\|_{H^1(\gamma)} \lesssim h^{t} \|\widetilde v\|_{H^t(\gamma)}
     % , \quad\text{ for all } \widetilde v\in H^t(\gamma) \text{ with } t\in[0,2].
\end{equation}
for all $\widetilde v\in H^t(\gamma)$ with $t\in[0,2]$.

\begin{comment}
The $L^2(\gamma)$ projection $\widetilde{\mathbf \Pi}$ is instrumental to guarantee the norm equivalence between the standard interpolation spaces $H^r(\gamma)$ and those using the eigenfunction expensions $\|L^{r/2} . \|_{L^2(\gamma)}$. By convention we set $H^0(\gamma) = L^2(\gamma)$. We first note that the orthogonal projection $\widetilde{\mathbf\Pi}$ is stable in both $L^2(\gamma)$ and $H^1(\gamma)$, i.e. for $\widetilde f\in H^r(\gamma)$ with $r=0$ or $1$, there holds
\begin{equation}\label{i:l2-proj-stable-1}
     \|\widetilde{\mathbf\Pi} \widetilde f\|_{H^r(\gamma)} \lesssim \|\widetilde f\|_{H^r(\gamma)},
\end{equation}
where the hidden constant depends on $c_q$, $c_J$ and $c_v$ mentioned in Section~\ref{ss:fem}. The $L^2(\gamma)$ stability directly follows from the definition of $\widetilde {\mathbf \Pi}$ while the proof of the $H^1(\gamma)$ stability is based on the Bramble-Xu argument \cite{BX91}; see also Proposition~4.4 of \cite{bonito2021approximation}.
% Furthermore, there holds for $\widetilde f\in H^r(\gamma)$ with $r\in (1,2]$,
% \[
%      \|\widetilde L^{r/2}_{\mathcal T}\widetilde{\mathbf\Pi} \widetilde f\|_{L^2(\gamma)}
%      \lesssim \|\widetilde f\|_{H^r(\gamma)},
% \]
% where the hidden constant additionally depends on the constant in elliptic regularity estimate: $\|v\|_{H^2(\gamma)}\le C\|Lv\|_{L^2(\gamma)}$; see \cite{BLP17}.
As a consequence, for $r\in [0,1]$ and $\widetilde F\in \widetilde{\mathbb V}(\mathcal T)$, there holds
\begin{equation}\label{l:norm-equivalency}
     \| \widetilde L_{\mathcal T}^{r/2}\widetilde F\|_{L^2(\gamma)} \sim \|\widetilde F\|_{H^r(\gamma)} .
\end{equation}
% We refer again to \cite{BLP17} for a proof.
\end{comment}

We now turn our attention to the eigenpairs of $\widetilde L_{\mathcal T}$.
We define $\{(\widetilde\Lambda_j^\gamma,\widetilde\Psi_{j}^\gamma)\}_{j=1}^N \subset \mathbb R^+_* \times \widetilde{\mathbb V}(\mathcal T)$ on the exact surface $\gamma$ as satisfying
\begin{equation}\label{e:dicrete_eig}
     \kappa^2\int_\gamma \widetilde \Psi_{j}^\gamma \widetilde V+\int_\gamma \nabla_\gamma \widetilde \Psi_{j}^\gamma\cdot\nabla_\gamma \widetilde V = \widetilde\Lambda_j^\gamma \int_\gamma \widetilde\Psi_{j}^\gamma \widetilde V,
     \quad\text{ for all } \widetilde V\in \widetilde{\mathbb V}(\mathcal T).
\end{equation}
The eigenfunctions are chosen to be $L^2(\gamma)$ orthonormal, i.e.
\begin{equation} \label{e:normalization}
     \int_\gamma \widetilde\Psi_{i}^\gamma\widetilde\Psi_{j}^\gamma = \delta_{ij}, \quad i,j=1,\ldots,N.
\end{equation}

Error estimates for the eigenvalues approximations $|\widetilde\lambda_j - \widetilde\Lambda_j^\gamma|$, $j=1,\ldots,N$, are needed in the proof of the mean square norm error estimate derived in Section~\ref{s:weak} below.
We refer to \cite{babuska1991eigenvalue, boffi2010finite} for standard error estimates on Euclidean domains which typically requires the mesh size to be sufficiently small, where how small depends on the eigenvalue being approximated. Moreover, the constants involved in the error estimates also depend on the magnitude of the eigenvalue being approximated.
% approximation error
% Moreover, the constants involved in the approximation error also depend on the magnitude of the eigenvalue approximated.
We refer to \cite{knyazev2006new} for \emph{cluster robust} error estimates. As a corollary, we obtain the following lemma and note that the provided estimate deteriorates for large eigenvalues, but will nonetheless be sufficient for our analysis.

\begin{lemma}\label{l:eigen-error}
     For $j = 1,\ldots, N$ there holds
     \begin{equation}\label{i:eigen-1}
          0\le \widetilde\Lambda_j^\gamma - \widetilde \lambda_j \lesssim \widetilde\lambda_j \widetilde \Lambda_j^\gamma h^2 ,
     \end{equation}
     where the hidden constant does not depend on $j$ nor on $h$.
\end{lemma}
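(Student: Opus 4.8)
The plan is to read \eqref{i:eigen-1} as a purely Galerkin eigenvalue approximation statement on the exact surface $\gamma$: since $\widetilde{\mathbb V}(\mathcal T)\subset H^1(\gamma)$ and the bilinear form $a_\gamma$ is the exact one, no geometric inconsistency enters here (that error appears only when comparing $\widetilde L_{\mathcal T}$ with $L_{\mathcal T}$). For the lower bound I would invoke the Courant--Fischer min-max characterizations
\[
\widetilde\lambda_j=\min_{\substack{S\subset H^1(\gamma)\\ \dim S=j}}\ \max_{0\ne v\in S}\frac{a_\gamma(v,v)}{\|v\|_{L^2(\gamma)}^2},
\qquad
\widetilde\Lambda_j^\gamma=\min_{\substack{S\subset \widetilde{\mathbb V}(\mathcal T)\\ \dim S=j}}\ \max_{0\ne v\in S}\frac{a_\gamma(v,v)}{\|v\|_{L^2(\gamma)}^2}.
\]
Because the discrete minimization runs over a strictly smaller family of subspaces, $\widetilde\Lambda_j^\gamma\ge\widetilde\lambda_j$, which is the left inequality in \eqref{i:eigen-1}.

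For the upper bound I would use a \emph{cluster-robust} argument, namely approximate the entire invariant subspace $E_j:=\mathrm{span}\{\widetilde\psi_1,\dots,\widetilde\psi_j\}$ rather than individual eigenfunctions, so that no spectral gap enters the constants. Let $\widetilde{\mathcal R}:H^1(\gamma)\to\widetilde{\mathbb V}(\mathcal T)$ be the $a_\gamma$-orthogonal (Ritz) projection. For $u=\sum_{\ell\le j}c_\ell\widetilde\psi_\ell\in E_j$ one has $a_\gamma(u,u)=\sum_{\ell\le j}\widetilde\lambda_\ell c_\ell^2\le\widetilde\lambda_j\|u\|_{L^2(\gamma)}^2$ and, since $T:L^2(\gamma)\to H^2(\gamma)$ is an isomorphism, $\|u\|_{H^2(\gamma)}\lesssim\|Lu\|_{L^2(\gamma)}\le\widetilde\lambda_j\|u\|_{L^2(\gamma)}$. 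Combining the quasi-optimality of $\widetilde{\mathcal R}$ in the energy norm with \eqref{e:error_l2_g} and an Aubin--Nitsche duality argument (legitimate thanks to the $H^2$-regularity afforded by $\gamma\in C^3$) yields a constant $C$, independent of $j$, with
\[
\|u-\widetilde{\mathcal R}u\|_{L^2(\gamma)}\lesssim h\,\|u-\widetilde{\mathcal R}u\|_{H^1(\gamma)}\lesssim h^2\|u\|_{H^2(\gamma)}\le C h^2\widetilde\lambda_j\|u\|_{L^2(\gamma)}.
\]
Using $a_\gamma(\widetilde{\mathcal R}u,\widetilde{\mathcal R}u)\le a_\gamma(u,u)$ together with $\|\widetilde{\mathcal R}u\|_{L^2(\gamma)}\ge(1-Ch^2\widetilde\lambda_j)\|u\|_{L^2(\gamma)}$, the trial space $\widetilde{\mathcal R}(E_j)$ (which has dimension $j$ whenever $Ch^2\widetilde\lambda_j<1$, forcing $\widetilde{\mathcal R}|_{E_j}$ injective) inserted into the discrete min-max gives, uniformly in $u\in E_j$,
\[
\widetilde\Lambda_j^\gamma\le\max_{0\ne u\in E_j}\frac{a_\gamma(\widetilde{\mathcal R}u,\widetilde{\mathcal R}u)}{\|\widetilde{\mathcal R}u\|_{L^2(\gamma)}^2}\le\frac{\widetilde\lambda_j}{(1-Ch^2\widetilde\lambda_j)^2}.
\]

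The main obstacle is obtaining a constant \emph{independent of $j$}: the last bound degenerates precisely when $h^2\widetilde\lambda_j$ is not small, and demanding $h^2\widetilde\lambda_j$ small would impose a $j$-dependent mesh restriction that the statement forbids. I would resolve this by a case distinction. If $Ch^2\widetilde\lambda_j\le\tfrac12$, then $(1-Ch^2\widetilde\lambda_j)^2\ge\tfrac14$ and the displayed bound gives $\widetilde\Lambda_j^\gamma-\widetilde\lambda_j\le\widetilde\lambda_j\big((1-Ch^2\widetilde\lambda_j)^{-2}-1\big)\lesssim\widetilde\lambda_j^2 h^2\le\widetilde\lambda_j\widetilde\Lambda_j^\gamma h^2$, the last step using $\widetilde\lambda_j\le\widetilde\Lambda_j^\gamma$. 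If instead $Ch^2\widetilde\lambda_j>\tfrac12$, then $\widetilde\lambda_j\widetilde\Lambda_j^\gamma h^2=\widetilde\Lambda_j^\gamma\,(h^2\widetilde\lambda_j)>\tfrac{1}{2C}\widetilde\Lambda_j^\gamma\ge\tfrac{1}{2C}(\widetilde\Lambda_j^\gamma-\widetilde\lambda_j)$, so \eqref{i:eigen-1} holds trivially. Combining the two regimes yields the right inequality in \eqref{i:eigen-1} with a constant depending only on $\kappa$, on $c_J$, $c_q$, $c_v$, and on the $H^2$-regularity of $\gamma$, but neither on $j$ nor on $h$. This case split, rather than any single sharp estimate, is exactly what makes the bound robust across the whole spectrum $j=1,\dots,N$ while, as noted, allowing it to deteriorate for large eigenvalues.
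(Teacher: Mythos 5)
Your proof is correct, but it takes a genuinely different route from the paper. The paper disposes of the lemma in three lines by citing the cluster-robust estimate of Knyazev--Osborn (Theorem~3.1 in \cite{knyazev2006new}), which directly provides the relative bound $0\le (\widetilde\Lambda_j^\gamma-\widetilde\lambda_j)/\widetilde\Lambda_j^\gamma \le \sup_{\widetilde v}\|(I-\widetilde{\mathbf\Pi})\widetilde v\|_{H^1(\gamma)}^2$ over the $H^1$-normalized invariant subspace $E_j$; the factor $\widetilde\Lambda_j^\gamma$ in \eqref{i:eigen-1} is thus already built into the cited result, and the paper only has to bound the supremum by $\widetilde\lambda_j h^2$ via \eqref{e:error_l2_g} and the bound $\|\widetilde v\|_{H^2(\gamma)}\lesssim\widetilde\lambda_j^{1/2}\|\widetilde v\|_{H^1(\gamma)}$ on $E_j$. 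You instead re-derive a cluster-robust bound from scratch: Courant--Fischer for the lower bound, then the Ritz projection of the whole invariant subspace $E_j$ inserted into the discrete min-max, giving $\widetilde\Lambda_j^\gamma\le\widetilde\lambda_j(1-Ch^2\widetilde\lambda_j)^{-2}$, and finally a case split on whether $Ch^2\widetilde\lambda_j\le\tfrac12$ to recover the stated form with a $j$- and $h$-independent constant (in the complementary regime the right-hand side of \eqref{i:eigen-1} already dominates $\widetilde\Lambda_j^\gamma$ itself, so the estimate is vacuous there). Each step checks out: the Aubin--Nitsche argument is legitimate because $T:L^2(\gamma)\to H^2(\gamma)$ is an isomorphism, $\|u\|_{H^2(\gamma)}\le\widetilde\lambda_j\|u\|_{L^2(\gamma)}$ holds on $E_j$, injectivity of the Ritz projection on $E_j$ follows from $Ch^2\widetilde\lambda_j<1$, and the elementary inequality $(1-x)^{-2}-1\lesssim x$ for $x\le\tfrac12$ together with $\widetilde\lambda_j\le\widetilde\Lambda_j^\gamma$ closes the first case. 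What your approach buys is self-containedness and transparency about where the $\widetilde\Lambda_j^\gamma$ factor comes from (your case split manufactures it, whereas the paper inherits it from the reference); what it costs is length, and a mild redundancy in that you are essentially re-proving the content of the cited theorem with the Ritz projection in place of the $L^2$ projection.
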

\begin{proof}
     The desired estimate follows from Theorem~3.1 in \cite{knyazev2006new} which guarantees that for $j=1,\ldots,N$, there holds
     \begin{equation}\label{i:knyazev}
          0\le \frac{\widetilde\Lambda_j^\gamma - \widetilde\lambda_j}{\widetilde\Lambda_j^\gamma} \le
          \sup_{\substack{\widetilde v\in{\rm span}\{\widetilde\psi_l: l\in \{1,\ldots, j\}\}, \\ \|\widetilde v\|_{H^1(\gamma)}=1}}
          \|(I-\widetilde{\mathbf\Pi})\widetilde v\|_{H^1(\gamma)}^2.
     \end{equation}
     Recall that $\widetilde{\mathbf\Pi}$ is the $L^2(\gamma)$ projection onto $\widetilde {\mathbb V}(\mathcal T)$, the lifted finite element space, see \eqref{e:L2_conforming}.
     It remains to estimate the right hand side using the approximation property \eqref{e:error_l2_g} of $\widetilde{\mathbf\Pi}$ to get
     \[
          \|(I-\widetilde{\mathbf\Pi})\widetilde v\|_{H^1(\gamma)}^2 \lesssim h^2 \| \widetilde v \|_{H^2(\gamma)}^2  \lesssim \widetilde\lambda_j h^2,
     \]
     where for the last inequality we used the fact that for $v_l := \int_\gamma \widetilde v\widetilde\psi_l$,
     \[
          \|\widetilde v\|_{H^2(\gamma)}  \lesssim \|L \widetilde v\|_{L^2(\gamma)} = \bigg(\sum_{l=1}^{j} \widetilde\lambda_l^2  v_l^2 \bigg)^{1/2} \lesssim  \widetilde\lambda_j^{1/2} \|\widetilde v\|_{H^1(\gamma)}.
     \]
     Inserting the above two inequalities into \eqref{i:knyazev} yields \eqref{i:eigen-1}.
\end{proof}

\subsection{Approximations of the White Noise \texorpdfstring{$\widetilde{w}$}{}} \label{s:approx_WN}
%In this aim, we introduce the $L^2(\gamma)$ projection $\mathbf \Pi$ onto the conforming finite element space $\widetilde{\mathbb V}(\mathcal T) := P^{-1}\mathbb V(\mathcal T)$.
%For $\widetilde v\in L^2(\gamma)$, we define $\mathbf\Pi \widetilde v\in \widetilde{\mathbb V}(\mathcal T)$ by the relations
%\[
%     \int_\gamma (\mathbf\Pi \widetilde v - \widetilde v) \widetilde V = 0,\quad\text{ for all } \widetilde V \in \widetilde{\mathbb V}(\mathcal T).
%\]
% Let $\{(\widetilde\Lambda_j^\gamma,\widetilde\Psi_{j}^\gamma)\}_{j=1}^N \subset \mathbb R^+_* \times \widetilde{\mathbb V}(\mathcal T)$ be the discrete eigenpairs on the exact surface $\gamma$, i.e.
% \begin{equation}\label{e:dicrete_eig}
%     \kappa^2\int_\gamma \widetilde \Psi_{j}^\gamma \widetilde V+\int_\gamma \nabla_\gamma \widetilde \Psi_{j}^\gamma\cdot\nabla_\gamma \widetilde V = \widetilde\Lambda_j^\gamma \int_\gamma \widetilde\Psi_{j}^\gamma \widetilde V,
%      \quad\text{ for all } \widetilde V\in \widetilde{\mathbb V}(\mathcal T).
% \end{equation}
% The eigenfunctions chosen to be $L^2(\gamma)$ orthogonal
% \begin{equation} \label{e:normalization}
%      \int_\gamma \widetilde\Psi_{i}^\gamma\widetilde\Psi_{j}^\gamma = \delta_{ij}, \quad i,j=1,\ldots,N.
% \end{equation}

In \cite{jansson2021surface}, a surface finite element is proposed to approximate \eqref{e:spde} on the sphere using a truncated KL expansion.
For more general surfaces, we follow \cite{herrmann2020multilevel} to construct a finite dimensional approximation of the white noise $\widetilde w$ in the conforming finite element space $\widetilde{\mathbb V}(\mathcal T)$, namely we approximate $\widetilde w = \big\langle \sum_{j=1}^\infty \xi_j \widetilde \psi_j,\cdot\big\rangle$ by
\begin{equation}\label{e:noise-projection}
     \widetilde W := \sum_{j=1}^{\infty}\xi_j\widetilde{\mathbf\Pi} \widetilde \psi_j.
\end{equation}
To show that $\widetilde W \in L^2(\Omega;L^2(\gamma))$ with
$$
     \|\widetilde W\|_{L^2(\Omega; L^2(\gamma))}^2 = N =\text{dim}(\widetilde{\mathbb V}(\mathcal T)),
$$
we follow the proof of \cite[Lemma~2.4]{herrmann2020multilevel}.
Invoking Fubini's theorem and using the orthonormality of the discrete eigenfunctions $\{\widetilde \Psi_j^{\gamma}\}_{j=1}^N$ defined in \eqref{e:dicrete_eig}, we compute
% see \eqref{e:normalization}, 
\begin{equation}\label{e:w-finite}
     \begin{aligned}
          \|\widetilde W\|_{L^2(\Omega; L^2(\gamma))}^2 & = \mathbb E\bigg[\sum_{j=1}^\infty \xi_j^2 (\widetilde{\mathbf\Pi} \widetilde \psi_j, \widetilde\psi_j)_\gamma^2 \bigg]
          = \sum_{j=1}^\infty \|\widetilde{\mathbf\Pi} \widetilde\psi_j\|_{L^2(\gamma)}^2                                                                                         \\
                                                        & =\sum_{j=1}^\infty\sum_{i=1}^N (\widetilde{\mathbf\Pi} \widetilde\psi_j, \widetilde\Psi_i^\gamma)_\gamma^2
          =\sum_{i=1}^N\sum_{j=1}^\infty (\widetilde\Psi_i^\gamma, \widetilde\psi_j)_\gamma^2 =\sum_{i=1}^N\|\widetilde\Psi_i^\gamma\|_{L^2(\gamma)}^2                            \\
                                                        & = N .
     \end{aligned}
\end{equation}
We can now use the map $P:L^1(\gamma) \rightarrow L^1(\Gamma)$ to define the approximation to $\widetilde w$ on the discrete surface $\Gamma$, namely
\begin{equation}\label{e:white-noise-app}
     W:= \sigma P\widetilde W \in L^2(\Omega;L^2(\Gamma)),
\end{equation}
where the multiplicative factor $\sigma$ is introduced to avoid geometric inconsistencies when mapping back to $\gamma$; see \eqref{e:area}.

% \DG{
% \begin{remark} \label{rem:other_basis}
%     Add remark that relation \eqref{e:err-discrete-basis} is true for any basis of $\mathbb V(\mathcal T)$ upon redefining $R$ and $M$. Details for \eqref{eqn:change_basis}: for $\{\Phi_j\}_{j=1}^N$ any basis, each basis function can be expressed in the basis of eigenfunctions as
%     $$\Phi_j = \sum_{j=1}^N(\Phi_j,\Psi_i)_{\Gamma}\Psi_i$$
%     Therefore,
%     $$W^{\Phi}=\sum_{j=1}^N\eta_j\left(\sum_{j=1}^N(\Phi_j,\Psi_i)_{\Gamma}\Psi_i\right)=\sum_{i=1}^N\left(\sum_{j=1}^N\underbrace{(\Phi_j,\Psi_i)_{\Gamma}}_{=:R_{ij}}\eta_j\right)\Psi_j=\sum_{j=1}^N (R \bseta)_i\Psi_i$$
%     and by identification with \eqref{def:W_Psi} we get $R\bseta=\bsxi$, i.e., $\bseta=R^{-1}\bsxi$.
% \end{remark}
% }
% \WL{I have redefined $W^\Phi$ for general basis and mention about the nodal basis functions in the last paragraph.}

% Gaussian
\subsection{Computing the Approximate White Noise \texorpdfstring{$W$}{}} \label{s:alpha}

In this section, we discuss how to compute the approximation $W=\sigma P\widetilde W$ of the white noise $\widetilde w$.
We emphasize that it involves the $L^2(\gamma)$ projection of the continuous eigenfunctions $\widetilde \psi_j$ defined in \eqref{e:eigenpb}, see \eqref{e:noise-projection}.

The construction of the approximate solution $U_k$ given in \eqref{e:sol_ukh} below requires the finite element approximation $U^l$ satisfying \eqref{e:discrete-parametric}. In turn, the latter requires the evaluation of
\begin{equation}\label{def:comp_rhs}
     \alpha_i:= \int_\Gamma W \Phi_i = \int_\Gamma \sigma P\widetilde W \Phi_i, \quad i=1,\ldots,N,
\end{equation}
where $\{\Phi_j\}_{j=1}^N$ denotes the Lagrange nodal basis functions of $\mathbb V(\mathcal T)$.

Using the mapping property \eqref{e:area} and the definition \eqref{e:noise-projection} of $\widetilde W$ together with \eqref{e:L2_conforming}, we write
\[
     \alpha_i = \int_\gamma \widetilde W (P^{-1}{\Phi_i})
     = \sum_{k=1}^\infty \xi_k \int_\gamma \widetilde\psi_k (P^{-1}{\Phi_i}),
\]
where we recall that $\xi_k\stackrel{i.i.d.}{\sim}\mathcal{N}(0,1)$.
This implies that $\mathbb{E}[\alpha_i]=0$, $i=1,\ldots,N$, and
\begin{equation} \label{def:weightedMass}
     \begin{aligned}
          \mathbb E[\alpha_i \alpha_j] & = \sum_{k=1}^\infty \left(\int_\gamma \widetilde\psi_k  (P^{-1}{\Phi_i})\right) \left( \int_\gamma \widetilde\psi_k (P^{-1}{\Phi_j}) \right) \\
                                       & = \int_\gamma (P^{-1}{\Phi_i})  (P^{-1}{\Phi_j})
          = \int_\Gamma \sigma\Phi_i \Phi_j =: M_{ij}, \quad i,j=1,\ldots,N,
     \end{aligned}
\end{equation}
where we used the orthonormality of the eigenfunctions $\{\widetilde\psi_j\}_{j=1}^\infty$.
Hence, the vector $\boldsymbol{\alpha}:= (\alpha_1,\ldots,\alpha_N)^T$ satisfies $\boldsymbol{\alpha}\sim \mathcal N(\mathbf 0,M)$, where $M:=(M_{ij})_{i,j=1}^N$ is the $\sigma$-weighted mass matrix associated to the Lagrange finite element basis $\{\Phi_j\}_{j=1}^N$.
As a consequence, if $G$ denotes a Cholesky factor of $M$, i.e. $GG^T=M$, the above reasoning indicates that $G\bsz$ with $\bsz\sim \mathcal N(\mathbf 0,I_{N\times N})$ follows the same distribution as ${\boldsymbol \alpha}$ and could thus be used in place of $\boldsymbol \alpha$.

% As pointed out earlier, the computation of $W$ does not require the approximation of  eigenfunctions. However,
For the analysis below, we shall need a different representation of $\widetilde W$ based on the discrete eigenfunctions $\{\widetilde\Psi_j^\gamma\}_{j=1}^N$ defined on the exact surface $\gamma$, see Section~\ref{s:FEM_gamma}. Namely, we set
\begin{equation}\label{e:noise_eigenfcts}
     \widetilde W^\Psi := \sum_{j=1}^N \xi_j \widetilde\Psi_j^\gamma,
\end{equation}
where $\xi_j\stackrel{i.i.d.}{\sim}\mathcal{N}(0,1)$; compare with \eqref{e:noise-projection}.
The corresponding random data vector is given by
\[
     {\boldsymbol \alpha}^\Psi :=(\alpha_i^\Psi)_{i=1}^N:=\bigg(\int_\Gamma\sigma P\widetilde W^\Psi\Phi_i\bigg)_{i=1}^N.
\]
%so that the discrete solution in the operator form is
%\begin{equation}\label{e:discrete-solution-eigen}
%     U_k^\Psi = \mathcal Q_k^{-s}(L_{\mathcal T})(\sigma P \widetilde W^\Psi) .
%\end{equation}

In fact, ${\boldsymbol \alpha}^\Psi$ follows the same distribution as ${\boldsymbol \alpha}$. To verify that this is true, let $R\in \mathbb R^{N\times N}$ be the matrix with entries
\[
     R_{ij} := \int_\Gamma \sigma (P\widetilde\Psi_i^\gamma) \Phi_j,
\]
so that $\alpha_i^\Psi = (R^T\bsxi)_i$ for $i=1,\ldots,N$, where $\bsxi:= (\xi_1,\ldots,\xi_N)^T$. Recalling that $M$ is the $\sigma$-weighted mass matrix with entries $M_{ij}=\int_\Gamma \sigma\Phi_i\Phi_j$, using the orthonormality property \eqref{e:normalization} we infer that $M = R^T R$. Thus,
\[
     \mathbb E[{\boldsymbol \alpha}^\Psi({\boldsymbol \alpha}^\Psi)^T] = \mathbb E[R^T\bsxi\bsxi^T R] = M .
\]
This shows that ${\boldsymbol \alpha}^\Psi,{\boldsymbol \alpha} \sim \mathcal N(\mathbf 0, M)$.

Thanks to the eigenvalues approximation estimate (Lemma~\ref{l:eigen-error}), we deduce estimates for $\| \widetilde L^{-r/2}_{\mathcal T}\widetilde W^\Psi\|_{L^2(\Omega; L^2(\gamma))}$ when $r\in(\tfrac{n-1}2,2s)$; compare with the estimates \eqref{i:w-finite-sum} for $\widetilde w$.  This is the object of the following lemma.
\begin{lemma}\label{l:negative-boundedness}
     For $r\in (\tfrac{n-1}2, 2s)$, we have
     \begin{equation}\label{i:w-eigen-finite}
          \|\widetilde L^{-r/2}_{\mathcal T}\widetilde W^\Psi\|_{L^2(\Omega;L^2(\gamma))}^2 \lesssim \frac{r}{r-(n-1)/2} .
     \end{equation}
\end{lemma}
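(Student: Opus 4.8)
The plan is to exploit that $\widetilde L_{\mathcal T}$ is diagonalized by the $L^2(\gamma)$-orthonormal eigenbasis $\{\widetilde\Psi_j^\gamma\}_{j=1}^N$ from \eqref{e:dicrete_eig}, so that $\widetilde L_{\mathcal T}^{-r/2}$ acts as multiplication by $(\widetilde\Lambda_j^\gamma)^{-r/2}$ on each $\widetilde\Psi_j^\gamma$. First I would use the expansion \eqref{e:noise_eigenfcts} to write
\[
     \widetilde L_{\mathcal T}^{-r/2}\widetilde W^\Psi = \sum_{j=1}^N \xi_j (\widetilde\Lambda_j^\gamma)^{-r/2}\widetilde\Psi_j^\gamma ,
\]
and then compute the mean square $L^2(\gamma)$ norm via Fubini's theorem, the orthonormality \eqref{e:normalization}, and $\mathbb E[\xi_i\xi_j]=\delta_{ij}$. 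This yields the identity
\[
     \|\widetilde L_{\mathcal T}^{-r/2}\widetilde W^\Psi\|_{L^2(\Omega;L^2(\gamma))}^2 = \sum_{j=1}^N (\widetilde\Lambda_j^\gamma)^{-r},
\]
reducing the claim to a bound on a sum of negative powers of the discrete eigenvalues, uniform in $N$ (equivalently in $h$).

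Next I would transfer this sum over the discrete eigenvalues to the continuous ones. The key tool is Lemma~\ref{l:eigen-error}, whose lower bound $\widetilde\Lambda_j^\gamma\ge\widetilde\lambda_j$ gives $(\widetilde\Lambda_j^\gamma)^{-r}\le\widetilde\lambda_j^{-r}$ since $r>0$; extending the finite sum to an infinite one then yields
\[
     \sum_{j=1}^N (\widetilde\Lambda_j^\gamma)^{-r} \le \sum_{j=1}^\infty \widetilde\lambda_j^{-r} .
\]
Finally I would invoke Weyl's law \eqref{i:weyl} to replace $\widetilde\lambda_j^{-r}$ by $j^{-2r/(n-1)}$ up to a constant, and conclude exactly as in \eqref{i:w-finite-sum}: since $r>\tfrac{n-1}{2}$ the exponent satisfies $\tfrac{2r}{n-1}>1$, so the series converges and an integral comparison bounds it by $\tfrac{r}{r-(n-1)/2}$.

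The argument is largely routine; the one point deserving care is that the final bound must be independent of both $N$ and $h$. This is exactly why the \emph{sign} in Lemma~\ref{l:eigen-error} is essential: it is the one-sided inequality $\widetilde\Lambda_j^\gamma\ge\widetilde\lambda_j$, rather than the mesh-dependent two-sided estimate \eqref{i:eigen-1}, that allows us to dominate the entire finite sum by the convergent continuous series $\sum_j\widetilde\lambda_j^{-r}$ without picking up any factor growing with $N$ or $h$.
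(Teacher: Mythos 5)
Your proposal is correct, and it reaches the same intermediate identity $\|\widetilde L_{\mathcal T}^{-r/2}\widetilde W^\Psi\|_{L^2(\Omega;L^2(\gamma))}^2=\sum_{j=1}^N(\widetilde\Lambda_j^\gamma)^{-r}$ as the paper, but the way you pass from the discrete eigenvalues to the continuous ones is genuinely different and in fact leaner. The paper writes $\sum_{j=1}^N(\widetilde\Lambda_j^\gamma)^{-r}=\sum_{j=1}^N\big[(\widetilde\Lambda_j^\gamma)^{-r}-\widetilde\lambda_j^{-r}\big]+\sum_{j=1}^N\widetilde\lambda_j^{-r}$ and controls the difference term via the quantitative estimate of Lemma~\ref{l:eigen-error}, $|(\widetilde\Lambda_j^\gamma)^{-r}-\widetilde\lambda_j^{-r}|\le r\widetilde\lambda_j^{-r-1}|\widetilde\Lambda_j^\gamma-\widetilde\lambda_j|\lesssim h^2\widetilde\lambda_j^{-r}\widetilde\Lambda_j^\gamma$, which it then tames with the inverse-type bound $\widetilde\Lambda_j^\gamma\le\widetilde\Lambda_N^\gamma\lesssim h^{-2}$ to recover $\lesssim\widetilde\lambda_j^{-r}$ termwise. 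You instead use only the one-sided inequality $\widetilde\Lambda_j^\gamma\ge\widetilde\lambda_j$ (the left half of \eqref{i:eigen-1}, i.e.\ the min--max consequence of conformity of $\widetilde{\mathbb V}(\mathcal T)\subset H^1(\gamma)$) to get $(\widetilde\Lambda_j^\gamma)^{-r}\le\widetilde\lambda_j^{-r}$ directly, so you never need the approximation rate $h^2$ nor the bound on $\widetilde\Lambda_N^\gamma$. Both routes then finish identically with Weyl's law \eqref{i:weyl} and the integral comparison from \eqref{i:w-finite-sum}. Your version is shorter and makes the mesh-independence of the constant transparent; the paper's version carries more machinery but exercises the same splitting-and-perturbation pattern it needs anyway in the weak error analysis (term $E_3$ of Theorem~\ref{t:weak-convergence}), where a one-sided domination would not suffice.
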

\begin{proof}
     %  We start by noting thanks to norm equivalence \eqref{l:norm-equivalency}, we have
     %  $$
     %  \|\widetilde W^\Psi\|_{L^2(\Omega;H^{-r}(\gamma))}^2 \lesssim \| \widetilde L_{\mathcal T}^{\frac r 2} \widetilde W^\Psi\|_{L^2(\Omega;L^2(\gamma))}^2.
     %  $$
     Using the expansion in the eigenfunction basis $\{ \widetilde \Psi_i^{\gamma} \}_{i=1}^N$ of $\widetilde L_{\mathcal T}$ (discrete operator defined on the lifted space $\widetilde{\mathbb V}(\mathcal T)$, see \eqref{e:tilde-L}), we find that
     \[
          \|\widetilde L_{\mathcal T}^{-r/2} \widetilde W^\Psi\|_{L^2(\Omega;L^2(\gamma))}^2
          = \sum_{j=1}^N (\widetilde\Lambda_j^\gamma)^{-r}
          = \sum_{j=1}^N [(\widetilde\Lambda_j^\gamma)^{-r} - \widetilde\lambda_j^{-r}]
          + \sum_{j=1}^N \widetilde\lambda_j^{-r}.
     \]
     The approximation estimates for the eigenvalues (Lemma~\ref{l:eigen-error}) yield
     \[
          |(\widetilde\Lambda_j^\gamma)^{-r} - \widetilde\lambda_j^{-r}|
          \le r\widetilde\lambda_j^{-r-1}|\widetilde\Lambda_j^\gamma - \widetilde\lambda_j|
          \lesssim h^2 \widetilde\lambda_j^{-r}\widetilde\Lambda_j^\gamma
          \lesssim \widetilde\lambda_j^{-r},
     \]
     where for the last inequality we used the fact that $\widetilde\Lambda_j^\gamma\le \widetilde \Lambda_N^\gamma \lesssim h^{-2}$.
     Therefore, we obtain
     % Gathering the above inequality we find that
     % \|\widetilde W^\Psi\|_{L^2(\Omega;H^{-r}(\gamma))}^2
     $$
          \|\widetilde L_{\mathcal T}^{-r/2} \widetilde W^\Psi\|_{L^2(\Omega;L^2(\gamma))}^2 \lesssim \sum_{j=1}^N \widetilde\lambda_j^{-r}.
     $$
     It remains to take advantage of the decay of the eigenvalues \eqref{i:weyl}, see also \eqref{i:w-finite-sum}, to derive the desired estimate.
\end{proof}

\subsection{Finite Element Approximation for \texorpdfstring{$L^{-s}$}{}}

The numerical scheme advocated here for approximating \eqref{e:num_int} follows the original work in \cite{BLP17}.
Although the scheme is conceptually the same, its analysis must be extended, see Section~\ref{s:analysis}.

With the change of variable $y=\ln(\mu)$, so that $\mu = e^y$, we rewrite \eqref{e:num_int} as
% Here we introduce a new variable $y$ so that $\mu = e^y$ to write
$$
     \widetilde u =L^{-s} \widetilde  w  = \frac{\sin(\pi s)}{\pi} \int_{-\infty}^\infty e^{(1-s)y}(e^y I+L)^{-1} \widetilde w\, dy.
$$
The improper integral in $y$ is approximated using a sinc quadrature with spacing $k>0$, i.e.
\begin{equation}\label{e:approx_quad}
     \widetilde u  \approx  \widetilde u_k:=\cQ^{-s}_k(L)\widetilde w := \frac{k\sin(\pi s)}{\pi} \sum_{l=-\mathtt M}^{\mathtt N} e^{(1-s) y_l} (e^{y_l}I+L)^{-1} \widetilde w,
\end{equation}
where
\begin{equation}\label{e:choose}
     \mathtt N:=\bigg\lceil \frac {2\pi^2}{(s-(n-1)/4) k^2}\bigg\rceil,
     \qquad
     \mathtt M:=\bigg \lceil \frac {\pi^2}{(1-s) k^2}\bigg \rceil
\end{equation}
and $y_l := k l$, $l = -\mathtt M,\ldots,\mathtt N$. This particular choice of $\mathtt N$ and $\mathtt M$ is motivated by the analysis provided in Section~\ref{s:analysis}.

The sinc quadrature approximation is stable as indicated by the following lemma.
\begin{lemma}\label{l:sinc-stability}
     For all $\widetilde v \in L^2(\Omega;H^{-r}(\gamma))$, there holds
     \begin{equation}\label{e:sinc-stab-cont}
          \| \cQ_{k}^{-s}(L) \widetilde v \|_{L^2(\Omega;L^2(\gamma))} \lesssim \| \widetilde v \|_{L^2(\Omega;H^{-r}(\gamma))}.
     \end{equation}
     %     and for all $V \in L^2(\Omega;\mathbb V(\mathcal T))$, we have
     %     \begin{equation}\label{e:sinc-stab-discr}
     %          \| \cQ_{k}^{-s}(L_{\mathcal T}) (\sigma V) \|_{L^2(\Omega;L^2(\Gamma))} \lesssim \| V \|_{L^2(\Omega;H^{-r}(\Gamma))}.
     %     \end{equation}
\end{lemma}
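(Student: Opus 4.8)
The plan is to reduce the stability of the sinc quadrature operator $\cQ_k^{-s}(L)$ to the deterministic resolvent estimate already established in the proof of Proposition~\ref{p:bochner}. Recall that for each $\mu>0$ the bound \eqref{i:bochner-decay} controls $\|(\mu I+L)^{-1}\widetilde v\|_{L^2(\gamma)}$ in terms of $\|\widetilde v\|_{H^{-r}(\gamma)}$ with the dichotomy $1$ for $\mu\le 1$ and $\mu^{r/2-1}$ for $\mu>1$. Since the sinc quadrature is a finite linear combination of resolvents evaluated at $\mu=e^{y_l}$, the natural first step is to apply this estimate termwise and then sum the geometric-type series in $l$.

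First I would fix $\zeta\in\Omega$ and work with the deterministic function $\widetilde v(\cdot,\zeta)\in H^{-r}(\gamma)$. Writing out \eqref{e:approx_quad} and using the triangle inequality together with \eqref{i:bochner-decay}, I obtain
\begin{equation*}
     \| \cQ_k^{-s}(L)\widetilde v\|_{L^2(\gamma)}
     \lesssim k \sum_{l=-\mathtt M}^{\mathtt N} e^{(1-s)y_l}\,\beta(e^{y_l})\,\|\widetilde v\|_{H^{-r}(\gamma)},
\end{equation*}
where $\beta(\mu)=1$ for $\mu\le 1$ and $\beta(\mu)=\mu^{r/2-1}$ for $\mu>1$. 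Splitting the sum at $y_l=0$ (i.e.\ $l=0$), the negative-index part contributes $k\sum_{l\le 0}e^{(1-s)kl}$, a convergent geometric series bounded by a constant times $k/(1-e^{-(1-s)k})\lesssim 1$ since $1-s>0$; the positive-index part contributes $k\sum_{l>0}e^{(1-s)kl}e^{(r/2-1)kl}=k\sum_{l>0}e^{(r/2-s)kl}$, which converges precisely because $r/2<s$ (recall $r\in(\tfrac{n-1}2,2s)$ forces $r/2<s$), and is likewise bounded by $k/(1-e^{-(s-r/2)k})\lesssim 1$. Thus the deterministic bound $\|\cQ_k^{-s}(L)\widetilde v(\cdot,\zeta)\|_{L^2(\gamma)}\lesssim \|\widetilde v(\cdot,\zeta)\|_{H^{-r}(\gamma)}$ holds with a constant independent of $k$.

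Finally I would lift this pointwise-in-$\omega$ bound to the Bochner norm: squaring, taking expectation over $\Omega$, and using the definition of the $L^2(\Omega;\cdot)$ norm gives
\begin{equation*}
     \|\cQ_k^{-s}(L)\widetilde v\|_{L^2(\Omega;L^2(\gamma))}^2
     = \mathbb E\big[\|\cQ_k^{-s}(L)\widetilde v\|_{L^2(\gamma)}^2\big]
     \lesssim \mathbb E\big[\|\widetilde v\|_{H^{-r}(\gamma)}^2\big]
     = \|\widetilde v\|_{L^2(\Omega;H^{-r}(\gamma))}^2,
\end{equation*}
which is \eqref{e:sinc-stab-cont}. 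The main obstacle I anticipate is ensuring the two geometric series are summed with constants genuinely independent of the quadrature spacing $k$ (and hence of $\mathtt M,\mathtt N$): one must check that the factor $k$ in front exactly compensates the $1/(1-e^{-ck})$ blow-up as $k\to 0$, uniformly, which hinges on the strict inequalities $1-s>0$ and $s-r/2>0$. A minor measurability remark may also be warranted, but this follows immediately since $\cQ_k^{-s}(L)$ is a finite sum of the continuous maps already shown measurable in Proposition~\ref{p:bochner}.
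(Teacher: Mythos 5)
Your proof is correct and follows exactly the route the paper intends: the paper omits the proof precisely because, as it states, the estimate "follows from similar arguments leading to \eqref{e:l2_I} (for a finite sum instead of the integral)," i.e., apply the resolvent bound \eqref{i:bochner-decay} termwise at the quadrature nodes $\mu_l=e^{y_l}$ and sum the resulting geometric series, which is what you do. Your closing observation that the prefactor $k$ compensates the $1/(1-e^{-ck})$ factors as $k\to0$ (using $1-s>0$ and $s-r/2>0$) is the right point to check, and the lift to the Bochner norm is immediate as you say.
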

In the interest of being concise, we do not provide a proof of this result since estimate \eqref{e:sinc-stab-cont} follows from similar arguments leading to \eqref{e:l2_I} (for a finite sum instead of the integral). %In addition, to show \eqref{e:sinc-stab-discr}, the eigenpairs $(\widetilde \lambda_j, \widetilde \psi_j)_{j=1}^\infty$ used in the argumentation should be replaced by their discrete counterparts. 
%\begin{remark}[spectral decomposition]\label{r:sinc-spectral}
%     We can also rewrite the sinc approximation $\mathcal Q_k^{-s}(L)\widetilde w$ by spectral decomposition. That is
%     \[
%          \mathcal Q_k^{-s}(L)\widetilde w = \sum_{j=1}^\infty Q_k^{-s}(\widetilde\lambda_j)\xi_j\widetilde\psi_j,
%     \]
%     where
%     \[
%          Q_k^{-s}(\widetilde\lambda_j) = \frac{k\sin(\pi s)}{\pi} \sum_{l=-\mathtt M}^{\mathtt N} e^{(1-s) y_l} (e^{y_l}I+\widetilde\lambda_j)^{-1}
%     \]
%     is the sinc approximation for $\widetilde\lambda_j^{-s}$ under the integral formulation \eqref{e:dt-scalar}.
%\end{remark}

The numerical approximation of \eqref{e:approx_quad} is based on \cite{bonito2021approximation}.
We recall that an to guarantee optimal convergence rate for the deterministic PDE $(-\Delta_\gamma) \widetilde u = \widetilde f$, the right hand side used in the numerical algorithm is chosen to be $\sigma P \widetilde f$. In the present context, this suggests the use of $\sigma P\widetilde W\in L^2(\Omega;L^2(\Gamma))$ as introduced in Section~\ref{s:approx_WN}, where $\widetilde W$ is defined in \eqref{e:noise-projection}. Consequently, the approximation $U_{k} \in L^2(\Omega;\mathbb V(\mathcal T))$ to $\widetilde u$ is defined as
% \cap L^2(\Gamma)
\begin{equation}\label{e:sol_ukh}
     U_{k}:= \cQ_{k}^{-s}(L_{\mathcal T}) (\sigma P\widetilde W) := \frac{k\sin(\pi s)}{\pi} \sum_{l=-\mathtt M}^{\mathtt N}e^{(1-s) y_l} U^l,
\end{equation}
where $U^l:=(e^{y_l}I+L_{\mathcal T})^{-1}(\sigma P\widetilde {W}) \in L^2(\Omega;\mathbb V(\mathcal T))$ approximates $\widetilde u^{l}:= \widetilde u^l (\widetilde w) := (e^{y_l}I+L)^{-1} \widetilde w$, namely it satisfies
\begin{equation}\label{e:discrete-parametric}
     (e^{y_l} + \kappa^2)\int_{\Gamma} U^l V  +
     \int_{\Gamma} \nabla_{\Gamma} U^l \cdot\nabla_{\Gamma}V
     = \int_{\Gamma} \sigma (P\widetilde W) V,
     \quad\text{ for all } V\in \mathbb V(\mathcal T).
\end{equation}
In other words, recalling the discussion in Section~\ref{s:alpha} for the computation of the approximate white noise, we have $U^l({\bf x},\omega) = \sum_{j=1}^N u_j^l(\omega)\Phi_j({\bf x})$ where the random vector ${\bf u}^l=(u_j^l)_{j=1}^N:\Omega\rightarrow\mathbb{R}^N$ solves the linear system
\begin{equation} \label{def:LS_random_l}
    A^l{\bf u}^l = G{\bf z}.
\end{equation}
Here ${\bf z}\sim \mathcal N(\mathbf 0,I_{N\times N})$, $G$ a Cholesky factor of the weighted mass matrix $M$ defined in \eqref{def:weightedMass}, and $A^l=(a_{ij}^l)_{i,j=1}^N\in\mathbb{R}^{N\times N}$ is given by
$$
a_{ij}^l:= (e^{y_l} + \kappa^2)\int_{\Gamma} \Phi_j\Phi_i  +
     \int_{\Gamma} \nabla_{\Gamma} \Phi_j \cdot\nabla_{\Gamma}\Phi_i,\quad i,j=1,\ldots,N.
$$

%\begin{equation}\label{e:Uk-Def-2}
%    U_k' = \mathcal Q_k^{-s}(L_{\mathcal T})(\sigma P \widetilde W^\Psi) .
%\end{equation}

Regarding the finite element error, we recall that Theorem~4.2 of \cite{bonito2021approximation} guarantees that for $\widetilde f \in L^2(\gamma)$ with vanishing mean value and $L=(-\Delta_\gamma)$, there holds
\[
     \|P\mathcal Q_k^{-s}(L)\widetilde f - \mathcal Q_k^{-s}(L_\mathcal T)(\sigma P\widetilde f)\|_{L^2(\Gamma)} \lesssim h^{2s}\ln(h^{-1}) \|\widetilde f\|_{L^2(\gamma)}
\]
provided $h < e^{-1}$.
This result readily extends to our current setting where the operator $L$ is $\kappa^2 I - \Delta_{\gamma}$ and the right hand side has a stochastic component. More precisely, we have
\begin{equation}\label{e:existingFEM}
     \|P\mathcal Q_k^{-s}(L)\widetilde W - \mathcal Q_k^{-s}(L_\mathcal T)(\sigma P\widetilde W)\|_{L^2(\Omega; L^2(\Gamma))} \lesssim h^{2s}\ln(h^{-1}) \|\widetilde W\|_{L^2(\Omega;L^2(\gamma))},
\end{equation}
where now the hidden constant depends on $\kappa$. A similar estimate holds for the finite element method defined on the lifted finite element space $\widetilde{\mathbb V}(\mathcal T)$ (see Section~\ref{s:FEM_gamma}), namely for the error between $\mathcal Q_k^{-s}(L)\widetilde W$ and $\mathcal Q_k^{-s}(\widetilde L_\mathcal T)\widetilde W$.

\subsection{Lifted Finite Element Method on \texorpdfstring{$\gamma$}{}}\label{ss:FEM_gamma}
As mentioned above, the approximation $\widetilde W^{\Psi}$ of the white noise defined in \eqref{e:noise_eigenfcts} will be needed for the analysis of the method. More precisely, the mean square norm error estimate derived in Section~\ref{s:weak} below involves the approximations $\mathcal Q_k^{-s}(\widetilde L_{\mathcal T}) \widetilde W^\Psi$ and $\mathcal Q_k^{-s}(L_{\mathcal T}) (\sigma P\widetilde W^\Psi)$ obtained with the finite element methods defined on $\widetilde{\mathbb V}(\mathcal T)$ and $\mathbb V(\mathcal T)$, respectively. We end this section by stating important results for these various approximations.

% , namely the approximations with data $\widetilde W^{\Psi}$ for the finite element method on $\widetilde{\mathbb V}(\mathcal T)$ and $\mathbb V(\mathcal T)$, respectively.

First, recalling that the random data vectors ${\boldsymbol \alpha}$ and ${\boldsymbol \alpha}^{\Psi}$ based on $\widetilde W$ and $\widetilde W^{\Psi}$, respectively, follow the same Gaussian distribution, we have
% % defined in Section \ref{s:alpha} 
\begin{equation}\label{e:weak-norm-equal}
     \|U_k\|_{L^2(\Omega;L^2(\Gamma))} =  \|\cQ_{k}^{-s}(L_{\mathcal T}) (\sigma P\widetilde W^{\Psi})\|_{L^2(\Omega;L^2(\Gamma))}.
\end{equation}
Moreover, using the expansion in the eigenfunction basis $\{ \widetilde \Psi_i^{\gamma} \}_{i=1}^N$ of $\widetilde L_{\mathcal T}$, the mean square norm of $\mathcal Q_k^{-s}(\widetilde L_{\mathcal T}) \widetilde W^\Psi$ is given by
\begin{equation}\label{e:mean_square_UkPsi}
     \|\mathcal Q_k^{-s}(\widetilde L_{\mathcal T}) \widetilde W^\Psi\|_{L^2(\Omega;L^2(\gamma))}^2
     = \sum_{j=1}^N  \mathcal Q_k^{-s}(\widetilde\Lambda_j^\gamma)^2 .
\end{equation}
Finally, we can show that the strong mean square error between $P\mathcal Q_k^{-s}(\widetilde L_{\mathcal T}) \widetilde W^\Psi$ and $\mathcal Q_k^{-s}(L_{\mathcal T}) (\sigma P\widetilde W^\Psi)$ is controlled by the geometric error. The analysis follows from the finite element error analysis in Section~4 of \cite{bonito2021approximation} and relies on the geometric error estimate for the weak formulation as well as regularity properties for the discrete operators, see Appendix~\ref{sec:appendixB} for the complete proof.
%We omit the proof here and refer to Appendix~\ref{sec:appendixB} for a complete version in the Appendix.
\begin{lemma}\label{l:frac-geo-error}
     % Let $\widetilde U_k^\Psi$ and $U_k'$ be defined as in \eqref{e:app-conforming} and \eqref{e:Uk-Def-2}, respectively. Then there holds that f
     There holds
     \[
          \|P\mathcal Q_k^{-s}(\widetilde L_{\mathcal T}) \widetilde W^\Psi-\mathcal Q_k^{-s}(L_{\mathcal T}) (\sigma P\widetilde W^\Psi)\|_{L^2(\Omega;L^2(\Gamma))} \le C(h) h^2,
     \]
     where $C(h)\lesssim 1$ when $n=2$ and $C(h)\lesssim \ln(h^{-1})$ when $n=3$ provided that $h<e^{-1}$.
     % Here, the constant $C(h)$ satisfies $C(h)\lesssim 1$ when $n=2$ while $C(h)\lesssim \ln(h^{-1})$ when $n=3$ provided that $h<e^{-1}$.
\end{lemma}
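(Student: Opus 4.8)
The plan is to reduce the mean-square geometric error to a single deterministic resolvent estimate per eigenmode, and then to control the resulting mode sum by comparison with an integral. Since every operator appearing in the difference ($\mathcal Q_k^{-s}(\widetilde L_{\mathcal T})$, $\mathcal Q_k^{-s}(L_{\mathcal T})$, $P$, and multiplication by $\sigma$) is linear and deterministic while $\widetilde W^\Psi=\sum_{j=1}^N \xi_j\widetilde\Psi_j^\gamma$ with $\xi_j\stackrel{i.i.d.}{\sim}\mathcal N(0,1)$, the error $E:=P\mathcal Q_k^{-s}(\widetilde L_{\mathcal T})\widetilde W^\Psi-\mathcal Q_k^{-s}(L_{\mathcal T})(\sigma P\widetilde W^\Psi)$ splits as $E=\sum_{j=1}^N\xi_j E_j$, where $E_j$ is the deterministic geometric error produced by the single datum $\widetilde\Psi_j^\gamma$. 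Orthonormality of the $\xi_j$ in $L^2(\Omega)$ then gives $\|E\|_{L^2(\Omega;L^2(\Gamma))}^2=\sum_{j=1}^N\|E_j\|_{L^2(\Gamma)}^2$, so it suffices to estimate each $\|E_j\|_{L^2(\Gamma)}$ and sum the squares.

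Next I would establish the per-node geometric estimate. Fix a quadrature node $\mu_l=e^{y_l}$ and set $\widetilde U_j^l:=(\mu_l I+\widetilde L_{\mathcal T})^{-1}\widetilde\Psi_j^\gamma=(\mu_l+\widetilde\Lambda_j^\gamma)^{-1}\widetilde\Psi_j^\gamma$, $U_j^l:=(\mu_l I+L_{\mathcal T})^{-1}(\sigma P\widetilde\Psi_j^\gamma)$, and $\widetilde e_j^l:=\widetilde U_j^l-P^{-1}U_j^l$. Testing the $\Gamma$-equation against $P\widetilde V$ and transporting every integral to $\gamma$ via \eqref{e:area} introduces only the geometric consistency terms $(\mu_l+\kappa^2)\int_\Gamma(\sigma-1)(P\,P^{-1}U_j^l)(P\widetilde V)$ and a tangential-gradient defect $\int_\gamma\nabla_\gamma(P^{-1}U_j^l)\cdot(A_h-I)\nabla_\gamma\widetilde V$, both of size $h^2$ thanks to \eqref{i:sigma} and the standard bound $\|A_h-I\|_{L^\infty}\lesssim h^2$ for the gradient-transfer matrix; this is precisely the geometric analysis of Section~4 in \cite{bonito2021approximation}. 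Writing $\|\cdot\|_{E,\mu}^2:=(\mu+\kappa^2)\|\cdot\|_{L^2(\gamma)}^2+\|\nabla_\gamma\cdot\|_{L^2(\gamma)}^2$, the geometric functional obeys a bound $\lesssim h^2\|\cdot\|_{E,\mu_l}\|\cdot\|_{E,\mu_l}$, so testing the error equation with $\widetilde e_j^l$ and absorbing a lower-order term for $h$ small yields $\|\widetilde e_j^l\|_{E,\mu_l}\lesssim h^2\|\widetilde U_j^l\|_{E,\mu_l}$. Since $\|\widetilde U_j^l\|_{E,\mu_l}^2=(\mu_l+\widetilde\Lambda_j^\gamma)^{-1}$ by \eqref{e:dicrete_eig}–\eqref{e:normalization}, dividing by $\sqrt{\mu_l+\kappa^2}$ produces the key bound $\|\widetilde e_j^l\|_{L^2(\gamma)}\lesssim h^2\big[(\mu_l+\kappa^2)(\mu_l+\widetilde\Lambda_j^\gamma)\big]^{-1/2}$.

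With the per-node bound in hand I would sum over the quadrature. Using $\|P\widetilde e_j^l\|_{L^2(\Gamma)}\lesssim\|\widetilde e_j^l\|_{L^2(\gamma)}$ (from \eqref{i:sigma}) and the weights $|w_l|=\tfrac{k|\sin(\pi s)|}{\pi}\mu_l^{1-s}$, one gets $\|E_j\|_{L^2(\Gamma)}\lesssim h^2\,k\sum_l \mu_l^{1-s}\big[(\mu_l+\kappa^2)(\mu_l+\widetilde\Lambda_j^\gamma)\big]^{-1/2}$. Because the summand is a smooth positive function of $y_l=\ln\mu_l$ decaying at both ends, the truncated sum is controlled by the integral $\int_0^\infty \mu^{-s}\big[(\mu+\kappa^2)(\mu+\widetilde\Lambda_j^\gamma)\big]^{-1/2}\,d\mu$, which, splitting into $\mu\lesssim\kappa^2$, $\kappa^2\lesssim\mu\lesssim\widetilde\Lambda_j^\gamma$, and $\mu\gtrsim\widetilde\Lambda_j^\gamma$, evaluates to $\lesssim(\widetilde\Lambda_j^\gamma)^{-\min(s,1/2)}$ up to a $\kappa$-dependent constant. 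Hence $\|E_j\|_{L^2(\Gamma)}\lesssim h^2(\widetilde\Lambda_j^\gamma)^{-\min(s,1/2)}$.

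Finally, summing the squares and using $\widetilde\Lambda_j^\gamma\ge\widetilde\lambda_j\sim j^{2/(n-1)}$ (Lemma~\ref{l:eigen-error} and Weyl's law \eqref{i:weyl}) gives $\|E\|_{L^2(\Omega;L^2(\Gamma))}^2\lesssim h^4\sum_{j=1}^N (\widetilde\Lambda_j^\gamma)^{-2\min(s,1/2)}\lesssim h^4\sum_{j=1}^N j^{-\frac{4\min(s,1/2)}{n-1}}$. For $n=2$ the exponent exceeds $1$ (as $s>\tfrac14$), so the sum is $O(1)$ and $C(h)\lesssim1$; for $n=3$ one has $s>\tfrac12$, the exponent equals exactly $1$, and with $N\sim h^{-(n-1)}$ the sum is $O(\ln N)=O(\ln h^{-1})$, giving $\|E\|\lesssim h^2\sqrt{\ln h^{-1}}\lesssim h^2\ln h^{-1}$ since $h<e^{-1}$. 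The main obstacle is the per-node geometric estimate in energy form: obtaining the full $h^2$ gain \emph{together with} the sharp $(\mu+\kappa^2)^{-1/2}(\mu+\widetilde\Lambda_j^\gamma)^{-1/2}$ decay is exactly what renders the mode sum convergent (resp. borderline-logarithmic), and it is here that the geometric machinery of \cite{bonito2021approximation} is essential.
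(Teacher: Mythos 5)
Your proof is correct, and it takes a genuinely different technical route from the one in Appendix~\ref{sec:appendixB}. The paper works at the operator level: it splits the per-node error operator $\mathcal E_l$ into a solve-discrepancy part $\mathcal E_l^1$ and a $(1-\sigma)$ part $\mathcal E_l^2$, controls each via negative-norm mapping properties of the discrete resolvents (Lemma~\ref{l:discrete-operator}) together with Lemma~\ref{l:negative-boundedness}, and for $n=3$ must invoke an inverse inequality with an $h^{-\epsilon}$ loss, optimizing $\epsilon\sim 1/\ln(h^{-1})$ to produce the logarithm. You instead diagonalize everything in the discrete eigenbasis $\{\widetilde\Psi_j^\gamma\}$ (which is exactly what $\widetilde W^\Psi$ was designed for), reduce to one deterministic Strang-type energy estimate per mode and per quadrature node, and obtain the explicit decay $\|\widetilde e_j^l\|_{L^2(\gamma)}\lesssim h^2\big[(\mu_l+\kappa^2)(\mu_l+\widetilde\Lambda_j^\gamma)\big]^{-1/2}$ directly from $\|\widetilde U_j^l\|_{E,\mu_l}^2=(\mu_l+\widetilde\Lambda_j^\gamma)^{-1}$; the quadrature sum and the mode sum (via Weyl's law and Lemma~\ref{l:eigen-error}) then close the argument with no duality or inverse inequality. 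Both proofs rest on the same geometric consistency inputs ($\|1-\sigma\|_{L^\infty}\lesssim h^2$ and the $O(h^2)$ gradient-transfer defect). What your route buys is a more elementary and self-contained argument and, for $n=3$, the slightly sharper bound $h^2\sqrt{\ln(h^{-1})}$ in place of $h^2\ln(h^{-1})$; what the paper's route buys is direct reuse of the operator machinery already established in \cite{bonito2021approximation}. Two small points to tidy up: at the single value $s=1/2$ (possible only when $n=2$) the middle range of your $\mu$-integral contributes an extra factor $\ln(e+\widetilde\Lambda_j^\gamma)$, which still leaves $\sum_j j^{-2}(\ln j)^2<\infty$ and hence $C(h)\lesssim 1$; and the Riemann-sum-to-integral comparison should be justified by monotonicity of the integrand on each tail (e.g.\ $g(y+t)\le e^{(1-s)t}g(y)$ for $t\ge 0$), which is immediate but worth stating.
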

% \begin{lemma}\label{l:frac-geo-error}
% % Let $\widetilde U_k^\Psi$ and $U_k'$ be defined as in \eqref{e:app-conforming} and \eqref{e:Uk-Def-2}, respectively. Then there holds that f
%      For any $r\in (\tfrac{n-1}{2},\min\{2s, \tfrac32\})$, there holds
%      \[
%           \|P\mathcal Q_k^{-s}(\widetilde L_{\mathcal T}) \widetilde W^\Psi-\mathcal Q_k^{-s}(L_{\mathcal T}) (\sigma P\widetilde W^\Psi)\|_{L^2(\Omega;L^2(\Gamma))} \le C(h) h^2\|\widetilde W^\Psi\|_{L^2(\Omega;H^{-r}(\gamma))},
%      \]
%      where $C(h)\lesssim 1$ when $r\le 1$ while for $r>1$, $C(h)\lesssim \ln(h^{-1})$ provided that $h<e^{-1}$. \DGc{I think we should fix $r=\min\{2s,1\}\le 1$ when $n=2$.}
%      % Here, the constant $C(h)$ satisfies $C(h)\lesssim 1$ when $n=2$ while $C(h)\lesssim \ln(h^{-1})$ when $n=3$ provided that $h<e^{-1}$.
% \end{lemma}

\section{Strong Error Estimate}\label{s:analysis}

In this section, we provide a mean square error estimate for the numerical approximation $U_k$ in \eqref{e:sol_ukh} to the solution $\widetilde u$ of the model problem \eqref{e:spde}.
We start with the convergence of the sinc quadrature scheme for the continuous operator, i.e. the discrepancy between $\widetilde u=L^{-s}\widetilde w$ in \eqref{e:num_int} and $\widetilde u_k = \mathcal Q_k^{-s}(L)\widetilde{w}$ expressed in \eqref{e:approx_quad}. Then, we address the finite element error between $\widetilde u_k$ and $\widetilde U_k = P^{-1}U_k$ on $\gamma$.
We note that by considering the sinc quadrature error first, our final estimate for the error $\widetilde u-\widetilde U_k$ improves the results of \cite{bolin2020numerical,herrmann2020multilevel}.

\subsection{Analysis of the Sinc Approximation}

The following result establishes the exponential convergence of the sinc quadrature approximation $\widetilde u_k=\mathcal Q_k^{-s}(L)\widetilde{w}$ to $\widetilde u$ in the strong mean square norm $\|.\|_{L^2(\Omega;L^2(\gamma))}$.
\begin{proposition}\label{p:sinc}
     Assume that $\frac{n-1}{4}<s<1$ and set $r=\frac{n-1}{4}+s$ so that $\widetilde w \in L^2(\Omega;H^{-r}(\gamma))$. Let $\widetilde u$ be the solution to \eqref{e:spde} and let $\widetilde u_k = \mathcal Q_k^{-s}(L)\widetilde{w}$ be defined by \eqref{e:approx_quad} with a sinc quadrature step $k>0$. There holds
     \[
          \|\widetilde u-\widetilde u_k\|_{L^2(\Omega;L^2(\gamma))} \lesssim e^{-\frac{\pi^2}k} \| \widetilde w\|_{L^2(\Omega;H^{-r}(\gamma))}.
     \]
\end{proposition}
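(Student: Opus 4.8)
The plan is to exploit the spectral representation in order to diagonalize the problem and thereby reduce the operator statement to a scalar sinc quadrature estimate. Expanding the white noise as in \eqref{e:white_noise} and applying each resolvent $(e^{y_l}I+L)^{-1}$ in the $L^2(\gamma)$-orthonormal eigenbasis $\{\widetilde\psi_j\}$, both $\widetilde u = L^{-s}\widetilde w = \sum_{j}\widetilde\lambda_j^{-s}\xi_j\widetilde\psi_j$ and $\widetilde u_k = \cQ_k^{-s}(L)\widetilde w = \sum_{j}\cQ_k^{-s}(\widetilde\lambda_j)\xi_j\widetilde\psi_j$ become diagonal, where $\cQ_k^{-s}(\widetilde\lambda_j)=k\sum_{l=-\mathtt M}^{\mathtt N} g_{\widetilde\lambda_j}(y_l)$ is the scalar quadrature approximating $\widetilde\lambda_j^{-s}$. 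Using orthonormality of the $\widetilde\psi_j$ together with $\mathbb E[\xi_i\xi_j]=\delta_{ij}$ gives
\begin{equation*}
\|\widetilde u-\widetilde u_k\|_{L^2(\Omega;L^2(\gamma))}^2 = \sum_{j=1}^\infty \big|\widetilde\lambda_j^{-s}-\cQ_k^{-s}(\widetilde\lambda_j)\big|^2, \qquad \|\widetilde w\|_{L^2(\Omega;H^{-r}(\gamma))}^2 = \sum_{j=1}^\infty \widetilde\lambda_j^{-r},
\end{equation*}
the latter being finite precisely because $r=\tfrac{n-1}{4}+s>\tfrac{n-1}{2}$, by Weyl's law \eqref{i:weyl}. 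Hence it suffices to prove the per-mode bound $|\lambda^{-s}-\cQ_k^{-s}(\lambda)|\lesssim e^{-\pi^2/k}\lambda^{-r/2}$ for all $\lambda\ge\kappa^2$, after which summing its square over $j$ and taking square roots yields the claim.

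For the scalar estimate I would set $g_\lambda(y):=\tfrac{\sin(\pi s)}{\pi}e^{(1-s)y}(e^y+\lambda)^{-1}$, so that $\lambda^{-s}=\int_{\mathbb R}g_\lambda$ after the change of variables $y=\ln\mu$ in \eqref{e:dt-scalar}, and decompose
\begin{equation*}
\lambda^{-s}-\cQ_k^{-s}(\lambda) = \Big(\int_{\mathbb R}g_\lambda - k\sum_{l\in\mathbb Z}g_\lambda(kl)\Big) + k\sum_{l<-\mathtt M}g_\lambda(kl) + k\sum_{l>\mathtt N}g_\lambda(kl).
\end{equation*}
The first term is the quadrature error of the full infinite sinc sum. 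The function $g_\lambda$ is holomorphic in the horizontal strip of half-width $\pi$ (its only singularity being the simple pole at $y=\ln\lambda+i\pi$) and decays integrably along horizontal lines; the standard sinc quadrature bound on a strip of half-width $\pi/2$ therefore produces the factor $e^{-\pi^2/k}$. Moreover, the substitution $y=\ln\lambda+t$ reveals the homogeneity $g_\lambda(\ln\lambda+t)=\lambda^{-s}g_1(t)$, so the relevant Hardy-space norm scales like $\lambda^{-s}$ with a constant uniform in $\lambda$, giving a contribution $\lesssim e^{-\pi^2/k}\lambda^{-s}$.

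For the two tails I would use elementary pointwise bounds on $g_\lambda$ combined with geometric summation and the explicit choices \eqref{e:choose}. For the left tail ($y_l<0$), the estimate $(e^{y_l}+\lambda)^{-1}\le\lambda^{-1}$ and $\mathtt M k\ge \pi^2/((1-s)k)$ give a contribution $\lesssim \lambda^{-1}e^{-\pi^2/k}$. For the right tail I would invoke the weighted arithmetic–geometric inequality $e^{y}+\lambda\ge e^{\theta y}\lambda^{1-\theta}$ with $\theta=1-\tfrac r2$, so that $g_\lambda(kl)\lesssim\lambda^{-r/2}e^{(r/2-s)kl}$; since $s-\tfrac r2=\tfrac12\big(s-\tfrac{n-1}{4}\big)>0$ the series converges and $\mathtt N k\ge 2\pi^2/((s-\tfrac{n-1}{4})k)$ again produces the factor $e^{-\pi^2/k}$. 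Finally, using $\lambda\ge\kappa^2$ and $\tfrac r2<s<1$ I would absorb the powers $\lambda^{-1}$ and $\lambda^{-s}$ into $\lambda^{-r/2}$, completing the per-mode bound.

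I expect the scalar estimate to be the crux, and within it the right-tail truncation: this is the only place where the hypothesis $s>\tfrac{n-1}{4}$ is genuinely used (it guarantees $s-\tfrac r2>0$, hence convergence of the relevant geometric series), and where the precise value of $\mathtt N$ in \eqref{e:choose} must be calibrated against the target power $\lambda^{-r/2}$ so that the exponential rate $e^{-\pi^2/k}$ and the correct $\lambda$-dependence emerge \emph{simultaneously}. A secondary technical point is ensuring that the constant in the Hardy-space quadrature bound is uniform in $\lambda$, which the scaling $g_\lambda(\ln\lambda+t)=\lambda^{-s}g_1(t)$ resolves cleanly.
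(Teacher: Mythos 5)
Your proposal is correct and follows essentially the same route as the paper's proof: diagonalize in the eigenbasis of $L$, reduce to the uniform scalar bound $|\lambda^{-s}-\mathcal Q_k^{-s}(\lambda)|\lesssim e^{-\pi^2/k}\lambda^{-r/2}$, and obtain that bound from analyticity of the integrand in the strip $|\Im z|\le\pi/2$ together with the exponential decay rates $(1-s)$ on the left and $s-\tfrac r2$ on the right, balanced by the choices of $\mathtt M,\mathtt N$ in \eqref{e:choose}. The only cosmetic difference is that you unpack the fundamental theorem of sinc quadrature (infinite-sum error plus two truncation tails, with the $\lambda$-dependence tracked via the scaling $g_\lambda(\ln\lambda+t)=\lambda^{-s}g_1(t)$) where the paper instead absorbs the weight $\lambda^{r/2}$ into the integrand and cites the combined theorem; your decay estimates coincide with the paper's \eqref{i:decay}.
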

\begin{proof}
     We only sketch a proof of this results since it follows the same arguments used in Theorem~3.2 of \cite{BLP17} except that the data $\widetilde w$ belongs to $L^2(\Omega;H^{-r}(\gamma))$ rather than $L^2(\gamma)$.
% $H^{-r}(\gamma)$

     Note that the specific choice of $r$ is made so that not only $\widetilde w \in L^2(\Omega;H^{-r}(\gamma))$ but also $\widetilde u \in L^2(\Omega;L^2(\gamma))$, see Section~\ref{ss:w-reg}.
     In particular, we have $\widetilde v := L^{-\frac r2} \widetilde w \in L^2(\Omega;L^2(\gamma))$ satisfies $\widetilde w = L^{\frac r2} \widetilde v$ with $\| \widetilde v\|_{L^2(\Omega;L^2(\gamma))} \lesssim \| \widetilde w \|_{L^2(\Omega;H^{-r}(\gamma))}$. Hence, using the eigenfunction expansion of $\widetilde v$ it suffices to investigate the quadrature error in the scalar case, namely
     \[
          e(\widetilde\lambda) := \bigg|\int_{-\infty}^\infty g_{\widetilde\lambda}(y) \,dy - k\sum_{l=-\mathtt M}^{\mathtt N} g_{\widetilde\lambda}(l k)\bigg|
     \]
     for all $\widetilde \lambda \ge \widetilde\lambda_1$. Here, the integrand
     \[
         g_{\widetilde\lambda}(z) :=\exp((1-s)z) (\exp(z) + \widetilde \lambda)^{-1}\widetilde\lambda^{\frac r2}
     \]
     arises from the relation
     $$
          e^{(1-s)y}(e^yI+L)^{-1} L^{\frac r2}  \widetilde \psi_j  = g_{\widetilde\lambda_j}(y) \widetilde \psi_j.
     $$
     % \DG{DG: Shouldn't $g(\widetilde \lambda_j)$ be $g(y)$ with $\widetilde\lambda$ replaced by $\widetilde \lambda_j$ in the definition of $g$? Then should we make the dependence of $g$ on $\lambda$ explicit?}
     It is analytic in the complex strip $\{z\in \mathbb C : |\Im z|\le \tfrac\pi 2\}$ and exhibits the following decay property (cf. \cite[Lemma~3.1]{BLP17})
     \begin{equation}\label{i:decay}
          |g_{\widetilde\lambda}(z)| \lesssim
          \left\{
          \begin{aligned}
                & \exp(-(s - \tfrac r2)\Re(z)), &  & \text{for } \Re(z)>0,     \\
                & \exp((1-s) \Re(z)),           &  & \text{for } \Re(z)\le 0 .
          \end{aligned}
          \right .
     \end{equation}
     Hence, the fundamental theorem for the sinc approximation (Theorem~2.20 in \cite{lund1992sinc}) ensures the error estimate
     \begin{equation}\label{i:sclar-exp}
          \begin{aligned}
               |e(\widetilde \lambda)| & \lesssim  \sinh(\pi^2/(2k))^{-1}e^{-\pi^2/(2k)} +  \textcolor{black}{\frac{1}{s-r/2}}e^{-(s-r/2)\mathtt Nk} + \textcolor{black}{\frac{1}{1-s}}e^{-(1-s)\mathtt Mk} \\
                                       & \lesssim e^{-\pi^2/k} +  e^{-[s-(n-1)/4]\mathtt Nk/2} + e^{-(1-s)\mathtt Mk},
          \end{aligned}
     \end{equation}
     where we used that $r=\tfrac{n-1}{4}+s$. The desired estimate follows with the choice of  $\mathtt M$ and $\mathtt N$ in \eqref{e:choose}, which ensures that the three exponents on the right hand side above are balanced
     \[
          e^{-[s-(n-1)/4]\mathtt Nk/2} \approx e^{-\pi^2/k} \approx e^{-(1-s)\mathtt Mk} .
     \]
     Hence, in view of \eqref{e:gn_reg} for a fixed $r$,
     \[
          \|\widetilde u-\widetilde u_k\|_{L^2(\Omega;L^2(\gamma))} \lesssim \big(\max_{\widetilde\lambda\ge\widetilde\lambda_1}|e(\widetilde\lambda)|\big)\|\widetilde{w}\|_{L^2(\Omega; H^{-r}(\gamma))}\lesssim e^{-\pi^2/k}    \|\widetilde{w}\|_{L^2(\Omega; H^{-r}(\gamma))} .
     \]
     The proof is complete.
\end{proof}
\begin{remark}
     Notice that the result of Proposition~\ref{p:sinc} holds for any choice of
     $$
          r\in \left( \frac{n-1}{2},2s \right)
     $$
     which guarantees that both $\widetilde w \in L^2(\Omega;H^{-r}(\gamma))$ and $\widetilde u \in L^2(\Omega;L^2(\gamma))$, see Section~\ref{ss:w-reg}.
     Furthermore, recall that according to \eqref{i:w-finite-sum} we have $\| \widetilde w \|_{L^2(\Omega; H^{-r}(\gamma))} \to +\infty$ as $r\to (n-1)/2$. This means that the error estimate provided by  Proposition~\ref{p:sinc} deteriorates as $s\to (n-1)/4$.
     % as $s\to (n-1)/4$
\end{remark}

\subsection{Strong Error Estimate}

The strong convergence of the finite element algorithm is discussed in this section. This result, together with the mean square norm convergence discussed in the next section, are the main results of this work.

\begin{theorem}[strong convergence]\label{t:strong}
     Assume that $\tfrac{n-1}4<s<1$. Let $\widetilde u$ be the solution to \eqref{e:spde} and let $U_k$ be the discrete approximation defined in \eqref{e:sol_ukh}. Then there holds
     \[
          \|\widetilde u - P^{-1}U_k\|_{L^2(\Omega;L^2(\gamma))} \lesssim \textcolor{black}{(\ln(h^{-1}))^{3/2}}h^{2s-(n-1)/2} + e^{-\pi^2/k}
     \]
     provided $h < e^{-1}$.
\end{theorem}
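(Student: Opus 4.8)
The plan is to compare $\widetilde u = L^{-s}\widetilde w$ with $P^{-1}U_k$ through the intermediate quantity $\mathcal Q_k^{-s}(L)\widetilde W$ built from the projected noise $\widetilde W$ of \eqref{e:noise-projection}. Writing
\[
\widetilde u - P^{-1}U_k = \underbrace{L^{-s}(\widetilde w - \widetilde W)}_{(\mathrm a)} + \underbrace{(L^{-s}-\mathcal Q_k^{-s}(L))\widetilde W}_{(\mathrm b)} + \underbrace{(\mathcal Q_k^{-s}(L)\widetilde W - P^{-1}U_k)}_{(\mathrm c)},
\]
I would estimate the three contributions separately in the $L^2(\Omega;L^2(\gamma))$ norm, so that $(\mathrm c)$ carries the finite element/geometric error, $(\mathrm b)$ the sinc quadrature error, and $(\mathrm a)$ the noise--projection error.

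For $(\mathrm c)$ I would first pass from $\gamma$ to $\Gamma$ using the area--ratio identity \eqref{e:area} together with $\sigma\sim 1$ from \eqref{i:sigma}, giving $\|\,\cdot\,\|_{L^2(\gamma)} \sim \|P\,\cdot\,\|_{L^2(\Gamma)}$, and then invoke the finite element estimate \eqref{e:existingFEM}. Since \eqref{e:w-finite} gives $\|\widetilde W\|_{L^2(\Omega;L^2(\gamma))}^2 = N$ and quasi-uniformity yields $N\sim h^{-(n-1)}$, this produces $(\mathrm c)\lesssim h^{2s}\ln(h^{-1})\sqrt N \sim \ln(h^{-1})h^{2s-(n-1)/2}$, which is exactly the first term of the claimed estimate. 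For $(\mathrm b)$ I would apply the same argument as in Proposition~\ref{p:sinc} (whose proof only uses membership in $L^2(\Omega;H^{-r}(\gamma))$), now with $\widetilde W$ in place of $\widetilde w$, to obtain $(\mathrm b)\lesssim e^{-\pi^2/k}\|\widetilde W\|_{L^2(\Omega;H^{-r}(\gamma))}$. It then remains to check $\|\widetilde W\|_{L^2(\Omega;H^{-r}(\gamma))}\lesssim 1$: expanding $\widetilde W$ and interchanging the two summations (Parseval) gives $\|\widetilde W\|_{L^2(\Omega;H^{-r}(\gamma))}^2 = \sum_j \widetilde\lambda_j^{-r}\|\widetilde{\mathbf\Pi}\widetilde\psi_j\|_{L^2(\gamma)}^2 \le \sum_j \widetilde\lambda_j^{-r}$, which is finite by Weyl's law \eqref{i:weyl} since $r>\tfrac{n-1}2$.

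The crux is $(\mathrm a)$, and this is where I expect the main obstacle. Using the independence of the $\xi_j$ and the isometry $\|L^{-s}\widetilde v\|_{L^2(\gamma)} = \|\widetilde v\|_{\dH^{-2s}(\gamma)}$, I would write
\[
\|L^{-s}(\widetilde w - \widetilde W)\|_{L^2(\Omega;L^2(\gamma))}^2 = \sum_{j=1}^\infty \|(I-\widetilde{\mathbf\Pi})\widetilde\psi_j\|_{\dH^{-2s}(\gamma)}^2 .
\]
A naive bound that uses only the boundedness of $L^{-s}:H^{-r}(\gamma)\to L^2(\gamma)$ together with $\|\widetilde w-\widetilde W\|_{L^2(\Omega;H^{-r}(\gamma))}$ loses a power of $h$ and yields the suboptimal rate $h^{s-(n-1)/4}$; the point is to exploit the full smoothing order $2s>r$ of $L^{-s}$. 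Expanding the $\dH^{-2s}$ norm in the eigenbasis and interchanging the order of summation transfers the projection defect onto the smoothed eigenfunctions,
\[
\sum_{j=1}^\infty \|(I-\widetilde{\mathbf\Pi})\widetilde\psi_j\|_{\dH^{-2s}(\gamma)}^2 = \sum_{j=1}^\infty \widetilde\lambda_j^{-2s}\,\|(I-\widetilde{\mathbf\Pi})\widetilde\psi_j\|_{L^2(\gamma)}^2 ,
\]
which is the step that renders the high-frequency tail summable (it replaces the hard-to-control $\|\widetilde{\mathbf\Pi}\widetilde\psi_j\|_{\dH^{-2s}(\gamma)}$ by the benign factor $\widetilde\lambda_j^{-s}$). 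I would then bound each defect by $\|(I-\widetilde{\mathbf\Pi})\widetilde\psi_j\|_{L^2(\gamma)}\lesssim \min(1,\,h^2\widetilde\lambda_j)$, combining the contraction property of $\widetilde{\mathbf\Pi}$ with the approximation estimate \eqref{e:error_l2_g} for $t=2$ together with $\|\widetilde\psi_j\|_{H^2(\gamma)}\sim\widetilde\lambda_j$.

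Splitting the last sum at the index where $\widetilde\lambda_j\sim h^{-2}$, i.e. $j\sim h^{-(n-1)}$ by Weyl's law \eqref{i:weyl}, both the head $h^4\sum_{\widetilde\lambda_j\lesssim h^{-2}}\widetilde\lambda_j^{2-2s}$ and the tail $\sum_{\widetilde\lambda_j\gtrsim h^{-2}}\widetilde\lambda_j^{-2s}$ evaluate to $h^{4s-(n-1)}$, where $s>\tfrac{n-1}4$ ensures convergence of the tail and $s<1$ of the head. This gives $(\mathrm a)\lesssim h^{2s-(n-1)/2}$, and collecting the three estimates yields $\|\widetilde u - P^{-1}U_k\|_{L^2(\Omega;L^2(\gamma))}\lesssim \ln(h^{-1})h^{2s-(n-1)/2}+e^{-\pi^2/k}$. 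The essential difficulty is thus not the sinc or finite element pieces, which follow from Proposition~\ref{p:sinc}, \eqref{e:sinc-stab-cont} and \eqref{e:existingFEM}, but recovering the sharp exponent in the noise--projection term $(\mathrm a)$ via the Parseval interchange, which is what extracts the extra smoothing of $L^{-s}$ beyond order $r$.
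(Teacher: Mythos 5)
Your proposal is correct and follows essentially the same route as the paper: the same three-term splitting into noise--projection, sinc-quadrature, and finite-element errors (the paper applies the quadrature error to $\widetilde w$ and yours to $\widetilde W$, which telescopes identically and only adds the easy check $\|\widetilde W\|_{L^2(\Omega;H^{-r}(\gamma))}\lesssim 1$), with the finite element piece handled through \eqref{e:existingFEM} and \eqref{e:w-finite} exactly as in the text. The only variation is in summing $\sum_j\widetilde\lambda_j^{-2s}\|(I-\widetilde{\mathbf\Pi})\widetilde\psi_j\|_{L^2(\gamma)}^2$: the paper uses the uniform bound with exponent $t=2s-(n-1)/2-\epsilon$ and optimizes $\epsilon\sim 1/\ln(h^{-1})$, whereas your split via $\min(1,h^2\widetilde\lambda_j)$ at the cutoff $\widetilde\lambda_j\sim h^{-2}$ yields the same rate $h^{4s-(n-1)}$ without the logarithmic factor from that particular term.
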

\begin{proof}
     We split the error into the sinc quadrature error, the white noise approximation error, and the finite element error
     %In view of the splitting 
     \begin{equation}\label{i:triangle}
          \begin{split}
               &\|\widetilde u - P^{-1} U_k\|_{L^2(\Omega;L^2(\gamma))} \\
               & \qquad \leq
               \|\widetilde u - \widetilde u_k\|_{L^2(\Omega;L^2(\gamma))}
               + \|\widetilde u_k - \widetilde u_k^N\|_{L^2(\Omega;L^2(\gamma))}
               + \|\widetilde u_k^N - P^{-1}U_k\|_{L^2(\Omega;L^2(\gamma))},
          \end{split}
     \end{equation}
     where $\widetilde u_k^N := \mathcal Q_k^{-s}(L)\widetilde W$ with $\widetilde W$ defined in \eqref{e:noise-projection}.
     % and $\widetilde U_k = P^{-1}U_k$ is the normal lift of $U_k$ onto $\gamma$. 

     The quadrature error is already estimated in Proposition~\ref{p:sinc} and reads
     $$
          \|\widetilde u - \widetilde u_k\|_{L^2(\Omega;L^2(\gamma))}\lesssim e^{-\frac{\pi^2}k} \| \widetilde w\|_{L^2(\Omega;H^{-r}(\gamma))}, \qquad r=\frac{n-1}{4}+s.
     $$

     For the white noise approximation error, we take advantage of the eigenfunction expansion and the stability of the sinc quadrature (Lemma~\ref{l:sinc-stability}, \textcolor{black}{see also the first inequality of \eqref{i:sclar-exp}) to infer that}
\textcolor{black}{
     \begin{equation*}
          \begin{aligned}
               \|\widetilde u_k-\widetilde u_k^N&\|_{L^2(\Omega; L^2(\gamma))}^2  = \|\mathcal Q_k^{-s}(L)(\widetilde w - \widetilde W)\|_{L^2(\Omega; L^2(\gamma))}^2 \\
                & \lesssim \epsilon^{-2}\|L^{-(s-\epsilon/2)}(\widetilde w-\widetilde W)\|_{L^2(\Omega; L^2(\gamma))}^2
               \lesssim \epsilon^{-2}\sum_{j=1}^\infty \widetilde\lambda_j^{-2s+\epsilon} \|(I-\widetilde{\mathbf\Pi})\widetilde\psi_j\|_{L^2(\gamma)}^2,
          \end{aligned}
     \end{equation*}
     where $0<\epsilon<2s-(n-1)/2$ will be chosen below.} The approximation property \eqref{e:error_l2_g} of $\widetilde{\mathbf\Pi}$ together with the property
     \[
          \|\widetilde\psi_j\|_{H^t(\gamma)} \lesssim \|L^{t/2}\widetilde\psi_j\|_{L^2(\gamma)}\lesssim  \widetilde \lambda_j^{t/2}\|\widetilde\psi_j\|_{L^2(\gamma)} = \widetilde \lambda_j^{t/2},
     \]
     both with \textcolor{black}{$t=2s-(n-1)/2-\widetilde\epsilon$, where $\epsilon<\widetilde\epsilon<2s-(n-1)/2$ so that $t\in (0,2)$},
     % both with $t=4s-(n-1) - 2 \epsilon \in (0,2)$,
     yield
     $$
          \|\widetilde u_k-\widetilde u_k^N\|_{L^2(\Omega; L^2(\gamma))}^2 \lesssim \textcolor{black}{\epsilon^{-2}h^{4s-(n-1)-2\widetilde\epsilon} \sum_{j=1}^\infty \widetilde\lambda_j^{-(n-1)/2-(\widetilde\epsilon-\epsilon)}}.
     $$
     Since $h < e^{-1}$, we choose \textcolor{black}{$\widetilde\epsilon = (2s-(n-1)/2)/\ln(h^{-1}) \in (0,2s-(n-1)/2)$ and $\epsilon=\widetilde\epsilon/2$} and deduce, using Weyl's law \eqref{i:weyl}, that
     \begin{equation}\label{est:finite_dim}
          \begin{aligned}
               \|\widetilde u_k-\widetilde u_k^N\|_{L^2(\Omega; L^2(\gamma))}^2 & \lesssim  \textcolor{black}{\epsilon^{-2}h^{4s-(n-1)-2\widetilde\epsilon}} \sum_{j=1}^\infty j^{-1-2(\widetilde\epsilon-\epsilon)/(n-1)}           \\
               & \lesssim \textcolor{black}{\epsilon^{-3}  h^{4s-(n-1)-2\widetilde\epsilon}} \lesssim \textcolor{black}{(\ln(h^{-1}))^3 h^{4s-(n-1)}}.
          \end{aligned}
     \end{equation}

     It remains to estimate the finite element error.
     Note that in view of the change of variable formula \eqref{e:area} and the estimates on the area ratio \eqref{i:sigma}, we have
     \begin{equation} \label{est:gamma_Gamma}
          \begin{aligned}
               \|\widetilde u_k^N - P^{-1} U_k\|_{L^2(\Omega;L^2(\gamma))}
                & = \|\sigma^{1/2}(P\widetilde u_k^N - U_k)\|_{L^2(\Omega;L^2(\Gamma))}                                                       \\
                & \lesssim \|P\widetilde u_k^N - U_k\|_{L^2(\Omega;L^2(\Gamma))}                                                              \\
                & =\|P\mathcal Q_k^{-s}(L)\widetilde W - \mathcal Q_k^{-s}(L_\mathcal T)(\sigma P\widetilde W)\|_{L^2(\Omega; L^2(\Gamma))} .
          \end{aligned}
     \end{equation}
     Whence, \eqref{e:existingFEM} together with \eqref{e:w-finite} yield
     $$
          \|\widetilde u_k^N - P^{-1} U_k\|_{L^2(\Omega;L^2(\gamma))} \lesssim h^{2s} \ln(h^{-1}) N^{\frac 1 2}\lesssim \ln(h^{-1})h^{2s-(n-1)/2}.
     $$

     The desired estimate follows upon gathering the three error estimates.
\end{proof}

\begin{remark}\label{r:sol-bounded}
     Note that from Theorem~\ref{t:strong}, we can deduce the following stability of the sinc quadrature applied to $L_{\mathcal T}$
     \begin{equation}\label{i:discrete-solution-bound-A}
\begin{split}
          &\|\mathcal Q_k^{-s}(L_{\mathcal T}) (\sigma P\widetilde W^\Psi)\|_{L^2(\Omega;L^2(\Gamma))} \\
          & \qquad \lesssim \| \widetilde u \|_{L^2(\Omega;L^2(\gamma))} + \textcolor{black}{(\ln(h^{-1}))^{3/2}}h^{2s-(n-1)/2} + e^{-\pi^2/k}
          \end{split}
     \end{equation}
     and thanks to Lemma~\ref{l:frac-geo-error}, the same hold for $\widetilde L_{\mathcal T}$
     \begin{equation}\label{i:discrete-solution-bound-B}
          \|P\mathcal Q_k^{-s}(\widetilde L_{\mathcal T}) (\widetilde W^\Psi)\|_{L^2(\Omega;L^2(\Gamma))}  \lesssim \| \widetilde u \|_{L^2(\Omega;L^2(\gamma))} + \textcolor{black}{(\ln(h^{-1}))^{3/2}}h^{2s-(n-1)/2} + e^{-\pi^2/k}.
     \end{equation}
\end{remark}

% Previous text:
% In view of Theorem~\ref{t:strong}, we have improved the strong error estimate from \cite[Theorem~2.10]{bolin2020numerical} and \cite[Proposition~2.2]{herrmann2020multilevel} by showing that error from the sinc approximation is independent of the mesh size $h$, namely the error is bounded by $h^{-(d-1)/2}e^{-\pi^2/k}$. This can be verified by numerics. According to the proof of Proposition~\ref{p:sinc}, it suffices to check the error $\max_{\widetilde\lambda\ge\widetilde\lambda_1}|e(\widetilde\lambda)|$. In Figure~\ref{fig:sinc}, we report the error $|e(\widetilde\lambda)|$ for $\widetilde\lambda\in [2,10^7]$. Here we set the fractional power $s=0.75$ and sinc quadrature spacing $k=0.6$. We observe that the error decays as the eigenvalue increases. So $\max_{\widetilde\lambda\ge 2}|e(\widetilde\lambda)| = |e(2)|$ and the sinc quadrature error is independent of the largest eigenvalue\footnote{\WL{We note that the monotonicity of the error $|e(\tilde\lambda)|$ depends on the choice of $M$ and $N$ with respect to $k$. Here our simulation is based on the balanced scheme \eqref{e:choose}.}}. Since the largest discrete eigenvalue is equivalent to $h^{-2}$,  Figure~\ref{fig:sinc} implies that the sinc quadrature error is independent of $h$.
           
\begin{remark}\label{r:sinc-independent}
In view of Theorem~\ref{t:strong}, we have improved the strong error estimate from \cite[Theorem~2.10]{bolin2020numerical} and \cite[Proposition~2.2]{herrmann2020multilevel} by showing that the error due to the sinc quadrature is independent of the mesh size $h$. This can be verified numerically. According to the proof of Proposition~\ref{p:sinc}, it suffices to check the error $\max_{\widetilde\lambda\ge\widetilde\lambda_1}|e(\widetilde\lambda)|$. In Figure~\ref{fig:sinc}, we report the error $|e(\widetilde\lambda)|$, $\widetilde\lambda\in [2,10^7]$, when the fractional power is set to $s=0.75$ and for the sinc quadrature spacing $k=0.6$. We observe that the error decays as the eigenvalue increases. Therefore, $\max_{\widetilde\lambda\ge 2}|e(\widetilde\lambda)| = |e(2)|$ and the sinc quadrature error is independent of the largest eigenvalue\footnote{We note that the monotonicity of the error $|e(\tilde\lambda)|$ depends on the choice of $\mathtt M$ and $\mathtt N$ with respect to $k$. Here our simulation is based on the balanced scheme \eqref{e:choose}.}.

\begin{figure}[htbp]
    \centering
    \begin{tabular}{c}
        \includegraphics[scale=0.2]{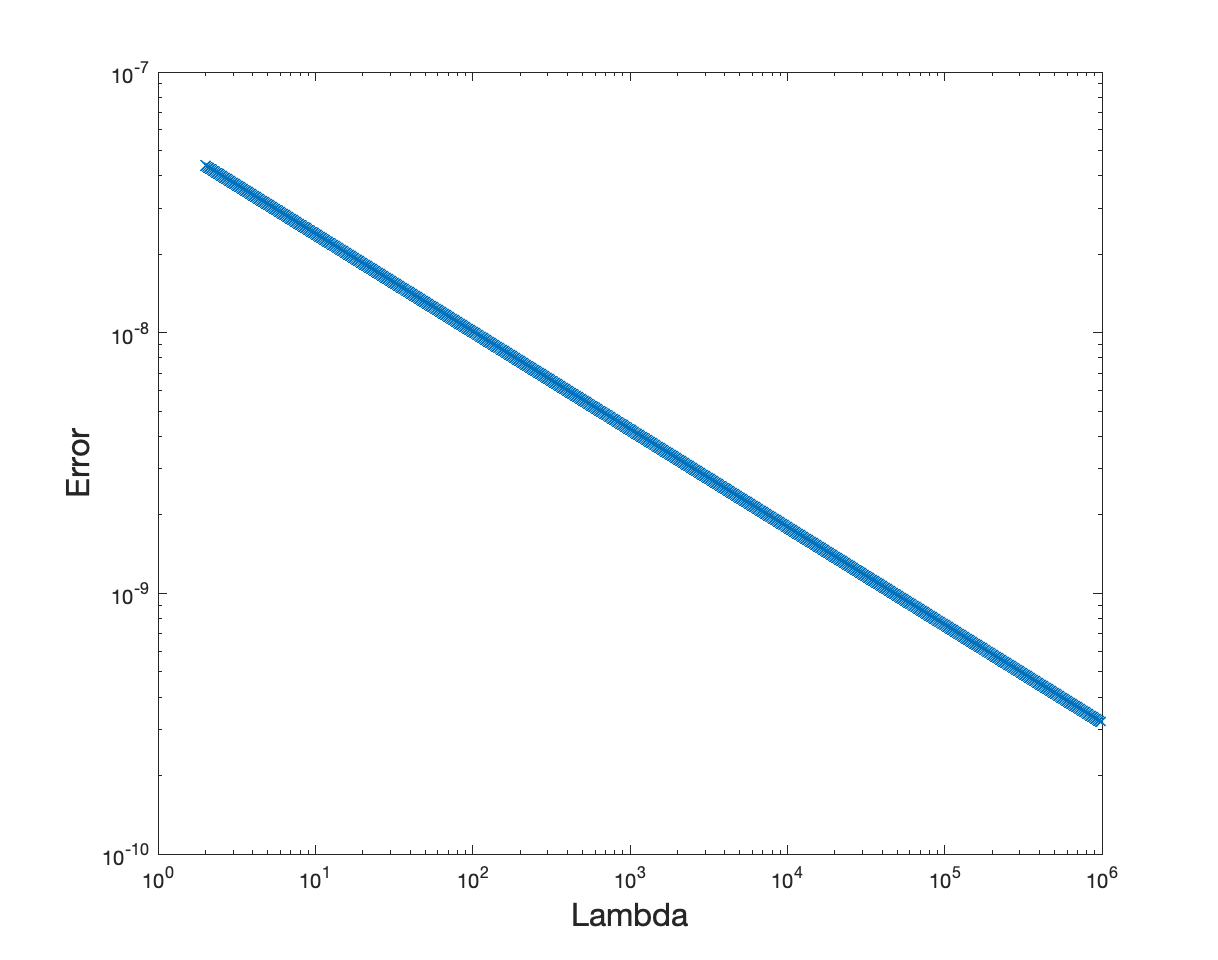}
    \end{tabular}
    \caption{Log-log plot of sinc approximation error $|e(\widetilde\lambda)|$ for $\widetilde\lambda\in [2,10^7]$.}
    \label{fig:sinc}
\end{figure}
\end{remark}

\section{Mean Square Norm Error Estimate} \label{s:weak}
In this section, we provide an error bound between the mean square norm of the exact solution $\widetilde u$ and that of its finite element approximation $U_k$. The main result is the following theorem.
%heorem~\ref{t:weak-convergence} below.
% in the weak mean square sense.

% and \eqref{e:Uk-Def-2}
\begin{theorem}[convergence in the mean square norm]\label{t:weak-convergence}
     Assume that $\tfrac{n-1}4 < s < 1$. Let $\widetilde u$ be the solution to \eqref{e:spde} and let $U_k$ be the approximation defined by \eqref{e:sol_ukh}. Then there holds\textcolor{black}{
     \[
          \bigg|\|\widetilde u\|_{L^2(\Omega;L^2(\gamma))}^2
          - \|U_k\|_{L^2(\Omega;L^2(\Gamma))}^2 \bigg|
          \lesssim \log(h^{-1})^3 h^{4s-(n-1)}+ C(h)h^2 + e^{-\pi^2/k},
          % \lesssim h^{\min\{4s-(n-1),2\}} + e^{-\pi^2/k} .
     \]}
     where $C(h)$ is the constant that appears in Lemma~\ref{l:frac-geo-error}.
\end{theorem}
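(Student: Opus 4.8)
The plan is to reduce the comparison of the two mean square norms to a purely deterministic comparison of eigenvalue sums, and then to telescope that difference through a truncation error, an eigenvalue approximation error, and a sinc quadrature error. The key point is that working directly with the second moments, rather than squaring the strong estimate of Theorem~\ref{t:strong}, doubles the convergence rate and produces the leading term $h^{4s-(n-1)}$.

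First I would rewrite both norms as eigenvalue sums. Since $\widetilde u = L^{-s}\widetilde w = \sum_{j=1}^\infty \widetilde\lambda_j^{-s}\xi_j\widetilde\psi_j$ with $\xi_j$ i.i.d.\ $\mathcal N(0,1)$ and $\{\widetilde\psi_j\}$ orthonormal, orthonormality and independence give $\|\widetilde u\|_{L^2(\Omega;L^2(\gamma))}^2 = \sum_{j=1}^\infty \widetilde\lambda_j^{-2s}$, which is finite precisely because $s>(n-1)/4$, by Weyl's law \eqref{i:weyl}. For the discrete norm I would invoke \eqref{e:weak-norm-equal} to replace $\widetilde W$ by $\widetilde W^\Psi$, so that $\|U_k\|_{L^2(\Omega;L^2(\Gamma))} = \|\mathcal Q_k^{-s}(L_{\mathcal T})(\sigma P\widetilde W^\Psi)\|_{L^2(\Omega;L^2(\Gamma))}$. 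The geometric discrepancy between this quantity and $\mathcal Q_k^{-s}(\widetilde L_{\mathcal T})\widetilde W^\Psi$ is controlled by Lemma~\ref{l:frac-geo-error}; combining this with the area-ratio bounds \eqref{i:sigma}--\eqref{e:area} (which show that $\|X\|_{L^2(\gamma)}^2$ and $\|PX\|_{L^2(\Gamma)}^2$ differ by a relative factor $h^2$) and the uniform boundedness of the discrete solutions from Remark~\ref{r:sol-bounded}, the factorization of a difference of squares into a product of a difference and a sum yields $\big|\|\mathcal Q_k^{-s}(\widetilde L_{\mathcal T})\widetilde W^\Psi\|_{L^2(\Omega;L^2(\gamma))}^2 - \|U_k\|_{L^2(\Omega;L^2(\Gamma))}^2\big| \lesssim C(h)h^2$. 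By \eqref{e:mean_square_UkPsi} the surviving discrete quantity equals $\sum_{j=1}^N \mathcal Q_k^{-s}(\widetilde\Lambda_j^\gamma)^2$, so it remains to bound the deterministic difference $\sum_{j=1}^\infty \widetilde\lambda_j^{-2s} - \sum_{j=1}^N \mathcal Q_k^{-s}(\widetilde\Lambda_j^\gamma)^2$.

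I would then split this difference through the two intermediate sums $\sum_{j=1}^N \widetilde\lambda_j^{-2s}$ and $\sum_{j=1}^N (\widetilde\Lambda_j^\gamma)^{-2s}$, producing three contributions. The \emph{truncation tail} $\sum_{j=N+1}^\infty \widetilde\lambda_j^{-2s} \lesssim \sum_{j>N} j^{-4s/(n-1)} \lesssim N^{1-4s/(n-1)}$; since $N \sim h^{-(n-1)}$ this is exactly $h^{4s-(n-1)}$, the leading term. The \emph{eigenvalue error} $\sum_{j=1}^N |\widetilde\lambda_j^{-2s} - (\widetilde\Lambda_j^\gamma)^{-2s}|$: by the mean value theorem and Lemma~\ref{l:eigen-error} each term is $\lesssim \widetilde\lambda_j^{-2s}\widetilde\Lambda_j^\gamma h^2$, and since $\widetilde\Lambda_N^\gamma \lesssim h^{-2}$ forces $\widetilde\Lambda_j^\gamma \lesssim \widetilde\lambda_j$ uniformly in $j$, this becomes $\lesssim h^2\sum_{j=1}^N \widetilde\lambda_j^{1-2s} \lesssim h^2\sum_{j=1}^N j^{(2-4s)/(n-1)}$, and evaluating the sum with Weyl's law gives again $h^{4s-(n-1)}$ (up to a logarithm in the borderline cases). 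The \emph{sinc quadrature error} $\sum_{j=1}^N |(\widetilde\Lambda_j^\gamma)^{-2s} - \mathcal Q_k^{-s}(\widetilde\Lambda_j^\gamma)^2|$: factoring the difference of squares and using the scalar sinc estimate from the proof of Proposition~\ref{p:sinc}, namely $|(\widetilde\Lambda_j^\gamma)^{-s} - \mathcal Q_k^{-s}(\widetilde\Lambda_j^\gamma)| \lesssim (\widetilde\Lambda_j^\gamma)^{-r/2}e^{-\pi^2/k}$ with $r=(n-1)/4+s \in ((n-1)/2,2s)$ (valid since $\widetilde\Lambda_j^\gamma \ge \widetilde\lambda_1=\kappa^2$), this is $\lesssim e^{-\pi^2/k}\sum_{j=1}^N (\widetilde\Lambda_j^\gamma)^{-r} \lesssim e^{-\pi^2/k}$, the last sum being finite as in Lemma~\ref{l:negative-boundedness}. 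Gathering the three contributions with the geometric term $C(h)h^2$ yields the claim.

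The main obstacle I anticipate is the eigenvalue error term. The naive bound $\widetilde\Lambda_j^\gamma h^2 \lesssim 1$ only produces $\sum_j \widetilde\lambda_j^{-2s} = O(1)$, which is useless; the gain hinges on the sharper observation, available through Lemma~\ref{l:eigen-error} together with $\widetilde\Lambda_N^\gamma \lesssim h^{-2}$, that $\widetilde\Lambda_j^\gamma \lesssim \widetilde\lambda_j$ uniformly in $j$, which turns the bound into $h^2\widetilde\lambda_j^{1-2s}$ and lets Weyl's law deliver the same $h^{4s-(n-1)}$ rate as the truncation term. A secondary point to monitor is that the borderline exponents (for instance $s=3/4$ when $n=2$) generate a logarithmic factor that must be absorbed into the stated bound.
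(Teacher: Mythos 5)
Your proposal is correct and follows essentially the same route as the paper: both reduce the weak error to a telescoped comparison of eigenvalue sums (truncation tail, eigenvalue approximation via Lemma~\ref{l:eigen-error} with $\widetilde\Lambda_j^\gamma\lesssim\widetilde\lambda_j$, scalar sinc error), handle the geometric discrepancy with Lemma~\ref{l:frac-geo-error} and the $\sigma$ bounds, and kill the last term via the equality in law behind \eqref{e:weak-norm-equal}. The only difference is the order of the telescope (you apply the sinc estimate to the discrete eigenvalues and the eigenvalue comparison to the exact powers $\widetilde\lambda_j^{-2s}$ versus $(\widetilde\Lambda_j^\gamma)^{-2s}$, while the paper does the reverse), which is immaterial; your remark about a logarithmic factor at the borderline exponent is a fair observation that the paper's estimate of $E_3$ silently absorbs.
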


\begin{proof}
     Using the triangle inequality, we spilt the target error with the following five error terms
     \[
          \bigg|\|\widetilde u\|_{L^2(\Omega;L^2(\gamma))}^2
          - \|U_k\|_{L^2(\Omega;L^2(\Gamma))}^2 \bigg|
          \le \sum_{i=1}^5 E_i,
     \]
     where, thanks to \eqref{e:mean_square_UkPsi}, we have
     \[
          \begin{aligned}
               E_1 & := \bigg| \|\widetilde u\|_{L^2(\Omega; L^2(\gamma))}^2 - \|\widetilde u_k\|_{L^2(\Omega;L^2(\gamma))}^2 \bigg|
               = \bigg| \sum_{j=1}^\infty \big( \widetilde\lambda_{j}^{-2s} - \mathcal Q_k^{-s}(\widetilde \lambda_j)^2 \big)\bigg|,                                                                                                 \\
               E_2 & := \sum_{j=N+1}^\infty \mathcal Q_k^{-s}(\widetilde\lambda_j)^{2},                                                                                                                                              \\
               E_3 & := \bigg| \sum_{j=1}^N \big( \mathcal Q_k^{-s}(\widetilde\lambda_{j})^{2} - \mathcal Q_k^{-s}(\widetilde\Lambda_j^\gamma)^2 \big)\bigg|,                                                                        \\
               E_4 & := \bigg|\|\mathcal Q_k^{-s}(\widetilde L_{\mathcal T}) \widetilde W^\Psi\|_{L^2(\Omega;L^2(\gamma))}^2 - \|\mathcal Q_k^{-s}(L_{\mathcal T}) (\sigma P\widetilde W^\Psi)\|_{L^2(\Omega;L^2(\Gamma))}^2 \bigg|, \\
               E_5 & :=\bigg|\|\mathcal Q_k^{-s}(L_{\mathcal T}) (\sigma P\widetilde W^\Psi)\|_{L^2(\Omega;L^2(\Gamma))}^2 - \|U_k\|_{L^2(\Omega;L^2(\Gamma))}^2 \bigg|.                                                             % =0 .
          \end{aligned}
     \]

     Note that \eqref{e:weak-norm-equal} guarantees that $E_5 = 0$ so we only need to estimate each error $E_i$, $i=1,\ldots,4$.
     %the following several steps.

     \boxed{1}  For $E_1$, the sinc quadrature error formula \eqref{i:sclar-exp} together with \eqref{e:choose} directly implies that for $r=\tfrac{n-1}4+s$ we have
     \[
          |\widetilde\lambda_j^{-s} - \mathcal Q_k^{-s}(\widetilde\lambda_j)| \lesssim |e(\widetilde \lambda_j)|\widetilde\lambda_j^{-r/2} \lesssim e^{-\pi^2/k} \widetilde\lambda_j^{-r/2}
     \]
     so that
     $$
          |\widetilde\lambda_j^{-2s} - \mathcal Q_k^{-s}(\widetilde\lambda_j)^2| \lesssim e^{-\pi^2/k}\widetilde\lambda_j^{-r/2}( \widetilde \lambda_j^{-s} + \mathcal{Q}_k^{-s}(\widetilde\lambda_j)) \textcolor{black}{\lesssim e^{-\pi^2/k}\widetilde\lambda_j^{-r},}
     $$
     where we used
     \begin{equation}\label{e:stab_scala_sinc}
          \textcolor{black}{\mathcal{Q}_k^{-s}(\widetilde \lambda_j) \lesssim \frac{1}{s-r/2}\widetilde \lambda_j^{-r/2}}
     \end{equation}
     with $r=\tfrac{n-1}{4}+s$ to justify the last inequality.
     Consequently, we find that
     \[
          E_1   \lesssim  e^{-\pi^2/k}\sum_{j=1}^\infty\textcolor{black}{ \widetilde\lambda_j^{-r} } \lesssim e^{-\pi^2/k} \| \widetilde w \|_{L^2(\Omega;H^{-r}(\gamma))}^2\lesssim e^{-\pi^2/k}.
     \]

     \boxed{2} We invoke again the stability of the sinc quadrature \eqref{e:stab_scala_sinc} with $r=2s-\epsilon$ with $\epsilon = (2s-(n-1)/2)/\ln(h^{-1})$ together with Weyl's law \eqref{i:weyl} to estimate $E_2$\textcolor{black}{
     \[
     \begin{aligned}
          E_2 \lesssim \epsilon^{-2}\sum_{j=N+1}^\infty \widetilde\lambda_j^{-(2s-\epsilon)}
         & \lesssim \epsilon^{-2}\sum_{j=N+1}^\infty j^{-2(2s-\epsilon)/(n-1)} \\
         & \lesssim \epsilon^{-3}N^{1-(4s-2\epsilon)/(n-1)}
          \lesssim (\log(h^{-1}))^{3} h^{4s-(n-1)} .
    \end{aligned}
     \]}

     \boxed{3} For $E_3$, the stability of the sinc quadrature \eqref{e:stab_scala_sinc} along with the error estimates on the approximation of the eigenvalues (Lemma~\ref{l:eigen-error}) yield
     \[
          \begin{aligned}
               \mathcal Q_k^{-s}(\widetilde\lambda_j) - \mathcal Q_k^{-s}(\widetilde\Lambda_j^\gamma)
                & = \frac{k\sin(\pi s)}{\pi}\sum_{l = -\mathtt N}^{\mathtt M} e^{(1-s)y_l}
               \bigg(\frac{1}{e^{y_l} +\widetilde\lambda_j} - \frac{1}{e^{y_l} +\widetilde\Lambda_j^\gamma}\bigg)                              \\
                & =  \frac{k\sin(\pi s)}{\pi}\sum_{l = -\mathtt N}^{\mathtt M} e^{(1-s)y_l}
               \frac{\widetilde\Lambda_j^\gamma-\widetilde\lambda_j}{(e^{y_l} +\widetilde\lambda_j)(e^{y_l} +\widetilde\Lambda_j^\gamma)}      \\
                & \le \frac{\widetilde\Lambda_j^\gamma - \widetilde\lambda_j}{\widetilde\Lambda_j^\gamma}\bigg(\frac{k\sin(\pi s)}{\pi}\sum_{l=-\mathtt N}^{\mathtt M}
               e^{(1-s)y_l}  \frac{1}{e^{y_l} +\widetilde\lambda_j} \bigg)                                                                             \\
                & = \frac{\widetilde\Lambda_j^\gamma - \widetilde\lambda_j}{\widetilde\Lambda_j^\gamma} \mathcal Q_k^{-s}(\widetilde\lambda_j)
             \textcolor{black}{  \lesssim \epsilon^{-1}\widetilde\lambda_j^{1-s+\epsilon/2}h^2.}
               %(\widetilde\Lambda_j^\gamma - \widetilde\lambda_j)(\widetilde\Lambda_j^\gamma)^{-1}\widetilde\lambda_j^{-s} .
          \end{aligned}
     \]
     Here again $\epsilon = (2s-(n-1)/2)/\ln(h^{-1})$. Since $\kappa^2 \leq \widetilde\lambda_j\le \widetilde\Lambda_j^{\gamma}$ thanks to Lemma~\ref{l:eigen-error}, we deduce that\textcolor{black}{
     \[
          \begin{aligned}
               E_3 & \lesssim  \sum_{j=1}^N |\mathcal Q_k^{-s}(\widetilde\lambda_j) - \mathcal Q_k^{-s}(\widetilde\Lambda_j^{\gamma})| |\mathcal Q_k^{-s}(\widetilde\lambda_j)+ \mathcal Q_k^{-s}(\widetilde\Lambda_j^\gamma)| \\
                   & \lesssim  \sum_{j=1}^N |\mathcal Q_k^{-s}(\widetilde\lambda_j) - \mathcal Q_k^{-s}(\widetilde\Lambda_j^{\gamma}) | \epsilon^{-1}\widetilde\lambda_j^{-s+\epsilon/2}  \lesssim \epsilon^{-2}h^2\sum_{j=1}^N \widetilde\lambda_j^{1-2s+\epsilon}          \\
                   & \lesssim \epsilon^{-2}h^2 \sum_{j=1}^N j^{(2-4s+2\epsilon)/(n-1)}     \lesssim  \epsilon^{-2}h^2 N^{1+(2-4s+2\epsilon)/(n-1)} \lesssim (\log(h^{-1}))^2h^{4s-(n-1)}.
          \end{aligned}
     \]}

     \boxed{4} We now focus on $E_4$. Using the triangle inequality and Lemma~\ref{l:frac-geo-error} as well as Remark~\ref{r:sol-bounded}, we have

     \begin{equation*}
          \begin{split}
               E_4 & \le \bigg |\|\mathcal Q_k^{-s}(\widetilde L_{\mathcal T}) \widetilde W^\Psi\|_{L^2(\Omega;L^2(\gamma))}^2 - \|P\mathcal Q_k^{-s}(\widetilde L_{\mathcal T}) \widetilde W^\Psi\|_{L^2(\Omega;L^2(\Gamma))}^2\bigg| \\
               & \quad\quad+ \bigg|\|P\mathcal Q_k^{-s}(\widetilde L_{\mathcal T}) \widetilde W^\Psi\|_{L^2(\Omega;L^2(\Gamma))}^2 - \|\mathcal Q_k^{-s}(L_{\mathcal T}) (\sigma P\widetilde W^\Psi)\|_{L^2(\Omega;L^2(\Gamma))}^2 \bigg|   \\
               & \lesssim \|1-\sigma\|_{L^\infty(\Gamma)} \|\mathcal Q_k^{-s}(\widetilde L_{\mathcal T}) \widetilde W^\Psi\|_{L^2(\Omega;L^2(\gamma))}^2 + C(h) h^2,
          \end{split}
     \end{equation*}
     where $C(h)$ is as in Lemma~\ref{l:frac-geo-error} and where for the second term on the right hand side we used \eqref{i:discrete-solution-bound-A} and \eqref{i:discrete-solution-bound-B}.
     The geometric consistency \eqref{i:sigma} together with the stability estimate \eqref{i:discrete-solution-bound-B} imply
     $$
          E_4 \lesssim h^2
          +  C(h) h^2 \lesssim C(h) h^2.
     $$

     The proof is complete by combing the estimates for $E_i$ with $i=1,\ldots,5$.
\end{proof}
\begin{remark}\label{r:rate-limited}
     We note that for two dimensional surfaces $(n=2)$, the rate of convergence is limited to $O(h^2)$ even when $s>\tfrac34$.
     This $O(h^2)$ limitation is due to the geometric approximation of the area $\|1-\sigma\|_{L^\infty(\Gamma)}$ to derive the estimate for $E_4$ in the proof of Theorem~\ref{t:weak-convergence}.
     We anticipate that the optimal order of convergence $O(h^{4s-(n-1)})$ can be obtained upon using higher order surface approximations. In fact, when $\gamma$ is of class $C^{p+1}$ and polynomial of degree $p$ are used to define the approximation $\Gamma$ of $\gamma$, one has $\| 1-\sigma \|_{L^\infty(\Gamma)}\lesssim h^{p+1}$ (see e.g. \cite{demlow09}).
     %\WL{Replacing the numerical scheme \eqref{e:discrete-parametric} for the subproblem $\widetilde u^l=(e^{y_l}I+L)^{-1}\widetilde w$ with the conforming scheme \eqref{e:tilde-L} can also recover the optimal rate.}
\end{remark}

\section{Numerical Illustration}\label{s:numeric}

% for different values of the fractional power $s$, and compute
In this section, we perform several numerical experiments to illustrate the proposed numerical method. In particular, we compute the strong and mean square norm errors as well as illustrate the effect of the parameters $s$ and $\kappa$ by plotting one realization of the approximated random field $U_k$ and by evaluating the covariance ${\rm cov}_{U_k}(\bx,\bx')$ at some points $\bx,\bx'\in\Gamma$. Recall that $U_k$ is a linear combination of the $\{U^l\}_{l=-{\mathtt M}}^{\mathtt N}$, see \eqref{def:LS_random_l}, and that a realization $U^l(\cdot,\omega)\in\mathbb{V}(\mathcal T)$ of $U^l$ is obtained by solving the linear system \eqref{def:LS_random_l} for a realization ${\bf z}(\omega)$ of ${\bf z}\sim\mathcal{N}(\mathbf 0,I_{N\times N})$.

For the numerical error analysis, we focus on the error due to the finite element discretization. Therefore, from now on, we set $k=0.6$ for the quadrature spacing and choose $\mathtt N$ and $\mathtt M$ according to \eqref{e:choose}, thus yielding (up to a multiplicative constant) a sinc quadrature error of order $e^{-\pi^2/k}\approx 7.1803\cdot 10^{-8}$. The numerical implementation is based on the \texttt{deal.ii} library \cite{dealII94} and the visualization is done with \texttt{ParaView} \cite{ayachit2015}.
% from

\subsection{Gaussian Mat\'ern Random Field on the Unit Sphere}

In this first example, we let $\gamma = \mathbb S^2$ be the sphere in $\mathbb{R}^3$ parametrized using the spherical coordinates $(\theta,\varphi)\in[0,\pi]\times[0,2\pi)$, where $\theta$ and $\varphi$ are the elevation angle (latitude) and azimuth angle (longitude), respectively. Using the spherical harmonic functions, see for instance \cite{MP2011,LS2015,jansson2021surface}, the formal KL expansion \eqref{e:white_noise} of the white noise can be written
% which are the eigenfunctions of the Laplace--Beltrami operator on the 2-sphere
\begin{equation} \label{eqn:KL_Sphere_WN}
     \begin{aligned}
          \widetilde \omega(\bx,\omega) & = \Big\langle\sum_{l=0}^{\infty}\Big[\xi_{l,0}^1(\omega) q_{l,0}(\theta) \\ &\qquad+\sqrt{2}\sum_{m=1}^{l} q_{l,m}(\theta)\left(\xi_{l,m}^1(\omega)\cos(m\varphi)+\xi_{l,m}^2(\omega)\sin(m\varphi)\right)\Big],\cdot\Big\rangle
     \end{aligned}
\end{equation}
and the solution to the SPDE \eqref{e:spde} reads
\begin{eqnarray} \label{eqn:KL_Sphere_sol}
     \widetilde u(\bx,\omega) & = & \sum_{l=0}^{\infty}(\kappa^2+l(l+1))^{-s}\Big[\xi_{l,0}^1(\omega) q_{l,0}(\theta) \nonumber \\
          & & +\sqrt{2}\sum_{m=1}^{l} q_{l,m}(\theta)\left(\xi_{l,m}^1(\omega)\cos(m\varphi)+\xi_{l,m}^2(\omega)\sin(m\varphi)\right)\Big],
\end{eqnarray}
where the $\kappa^2+l(l+1)$ are the eigenvalues of the operator $L$ (multiplicity $2l+1$). Here $$\xi_{l,m}^i\stackrel{i.i.d.}{\sim} \mathcal N(0,1), \quad l\in\mathbb{N}_0:=\mathbb{N} \cup \{ 0 \}, \,\, m=0,\ldots,l, \,\, i=1,2,$$
and $\bx=(\sin(\theta)\cos(\varphi),\sin(\theta)\sin(\varphi),\cos(\theta))\in\gamma$. Moreover, for $l\in\mathbb{N}_0$ and $m=0,\ldots,l$,
$$q_{l,m}(\theta):=\sqrt{\frac{2l+1}{4\pi}\frac{(l-m)!}{(l+m)!}}P_{l,m}(\cos(\theta))$$
with $P_{l,m}(\mu)$ the Legendre functions
$$P_{l,m}(\mu):=(-1)^m(1-\mu^2)^{\frac{m}{2}}\frac{\partial^m}{\partial\mu^m}P_{l}(\mu), \quad \mu\in[-1,1],$$
associated to the Legendre polynomials
$$P_{l}(\mu):=\frac{1}{2^{l}l!}\frac{\partial^{l}}{\partial\mu^{l}}(\mu^2-1)^{l}, \quad \mu\in[-1,1].$$
Note that the (complex-valued) spherical harmonic functions are given in spherical coordinates by $y_{l,m}(\theta,\varphi):=q_{l,m}(\theta)e^{im\varphi}$ for $l\in\mathbb{N}_0$ and $m=0,\ldots,l$ and $y_{l,m}:=(-1)^m\bar y_{l,m}$ for $l\in\mathbb{N}$ and $m=-l,\ldots,-1$. Then for $l\in\mathbb{N}_0$ and $m=-l,\ldots,l$, $y_{l,m}$ is an eigenfunction of the Laplace--Beltrami operator associated to the eigenvalue $l(l+1)$. Finally, we mention that $\widetilde u$ in \eqref{eqn:KL_Sphere_sol} satisfies
% , which thus has multiplicity $2l+1$
% since the solution $\widetilde u$ in \eqref{eqn:KL_Sphere_sol} is centered we have
\begin{equation} \label{e:norm_u}
     \|\widetilde u\|_{L^2(\Omega;L^2(\gamma))}^2=\sum_{l=0}^{\infty}(\kappa^2+l(l+1))^{-2s}(2l+1).
\end{equation}

% the representation $\widetilde y_{l,m}(\bx):=y_{l,m}(\theta,\varphi)$ in Cartesian coordinates satisfies
% $$-\Delta_{\gamma}\widetilde y_{l,m}=l(l+1)\widetilde y_{l,m}, \quad \mbox{on } \gamma, \quad l\in\mathbb{N}_0, \,\, m=-l,\dots,l,$$
% i.e., $l(l+1)$ is an eigenvalue of $(-\Delta_{\gamma})$ of multiplicity is $2l + 1$; we refer to for details.

\subsubsection{Strong and Mean Square Norm Errors}
% Error in the strong and weak norms
We first compute the strong and mean square norm errors given, respectively, by
$$\|P\widetilde u-U_k\|_{L^2(\Omega;L^2(\Gamma))} \sim  \|\widetilde u - P^{-1}U_k\|_{L^2(\Omega;L^2(\gamma))}
$$
and
$$
     \left|\|\tilde u\|_{L^2(\Omega;L^2(\gamma))}^2-\|U_k\|_{L^2(\Omega;L^2(\Gamma))}^2\right|,
$$
where $U_k$ is the approximation given in \eqref{e:sol_ukh}. However, these quantities cannot be computed exactly in practice, and some approximations are required. First, we accurately compute the norm of $\widetilde u$ (still denoted $\|\widetilde u\|_{L^2(\Omega;L^2(\gamma))}^2$ below) using \eqref{e:norm_u} but keeping the first $100000$ terms. For the other terms, we use the vanilla Monte Carlo method with sample size $K$ to compute the expected values. Now for the strong error, since $\tilde u$ involves an infinite sum, we replace it by $\widetilde u_{\mathtt L}=L^{-s}\widetilde w_{\mathtt L}$, where $\widetilde u_{\mathtt L}$ and $\widetilde w_{\mathtt L}$ are defined as in \eqref{eqn:KL_Sphere_WN} and \eqref{eqn:KL_Sphere_sol}, respectively, but with the index $l$ running from $0$ to some positive integer $\mathtt L$. Then we have
\begin{equation} \label{e:norm_uL}
     \|\widetilde u_{\mathtt L}\|_{L^2(\Omega;L^2(\gamma))}^2=\sum_{l=0}^{\mathtt L}(\kappa^2+l(l+1))^{-2s}(2l+1)
\end{equation}
and
\begin{equation}
     \|\widetilde w_{\mathtt L}\|_{L^2(\Omega;L^2(\gamma))}^2=\sum_{l=0}^{\mathtt L}(2l+1)=(\mathtt L +1)^2,
\end{equation}
and we refer to \cite{LS2015} for an analysis of the truncation error $\|\tilde u-\tilde u_{\mathtt L}\|_{L^2(\Omega;L^2(\gamma))}$.
Moreover, we need to have comparable samples for the (truncated) exact solution $\widetilde u_{\mathtt L}$ and the approximation $U_k$. This would require the computation of ${\boldsymbol \alpha}$ with entries given in \eqref{def:comp_rhs}, and thus the computation of the $L^2(\gamma)$ projection of the eigenfunctions onto the conforming finite element space $\widetilde{\mathbb V}(\mathcal T)$. In particular, we cannot use the strategy described in Section~\ref{s:alpha}, namely replace $\boldsymbol{\alpha}$ by $G\bsz$ with $\bsz\sim\mathcal{N}(0,I_{N\times N})$ and $GG^T=M$. As an alternative, we follow \cite{bolin2020numerical} and replace $U_k$ by $U_{\mathtt L,k}$ obtained by using the data vector with entries $(\sigma P\tilde w_{\mathtt L},\Phi_i)_{\Gamma}$, $i=1,\ldots,N$. To sum up, we report below the errors
% in Tables \ref{tab:error_kappa05}, \ref{tab:error_kappa2} and \ref{tab:error_kappa8} the errors
$$e_{\rm strong}:= \left(\frac{1}{K}\sum_{i=1}^{K}\|P\tilde u_{\mathtt L}(\cdot,\omega_i)- U_{\mathtt L,k}(\cdot,\omega_i)\|_{L^2(\Gamma)}^2\right)^{\frac 1 2}$$
and
$$e_{\rm weak}:= \left|\|\tilde u\|_{L^2(\Omega;L^2(\gamma))}^2-\frac{1}{K}\sum_{i=1}^{K}\|U_k(\cdot,\omega_i)\|_{L^2(\Gamma)}^2\right|,$$
as well as $e_{\sigma}:=\|1-\sigma\|_{L^{\infty}(\Gamma)}$, for different meshes and different values of the parameters $s$ and $\kappa$. Recall that according to Theorems~\ref{t:strong} and~\ref{t:weak-convergence}, the theoretical convergence rate is $2s-1$ for the strong error (up to a logarithm term) and $4s-2$ for the mean square error. In Table~\ref{tab:error_strong} we report the strong error $e_{\rm strong}$ for $\mathtt L=100$ and $K=10000$. In all cases we observe approximately the convergence rate $O(h^2)$ predicted by Theorem 4.2 in \cite{bonito2021approximation} for smooth right-hand sides (in physical space), see also Proposition 4.2 in \cite{jansson2021surface}.
In passing, we mention that numerical instabilities may arise when $\mathtt{L}$ is large because of large values assumed by the Legendre functions $P_{l,m}$ (for instance $P_{100,98}(0)=-1.675\cdot 10^{184}$ and $P_{100,100}(0)=6.666\cdot 10^{186}$). However, we do not clearly observe such instability, see   Figure~\ref{fig:sphere_errors_strong}.

% Table~\ref{tab:error_strong_L} the strong error for different value of the truncation parameter $\mathtt{L}$.

%\DG{I am running the case $\kappa=8$ and this should be done soon (at the last step $N=6146$).}

\begin{table}[htbp]
     \centering
     \begin{tabular}{|c|c|c|r|c|r|r|c|c|}
          \cline{4-9}
          \multicolumn{3}{c|}{ } & \multicolumn{3}{c|}{$s=0.75$} & \multicolumn{3}{c|}{$s=0.9$} \\
          \hline
          $N$ & $h$ & $e_{\sigma}$ & $\kappa=0.5$ & $\kappa=2$ & $\kappa=8$ & $\kappa=0.5$ & $\kappa=2$ & $\kappa=8$ \\
          \hline
8 & 1.633 & 2.000 & 65.488 & 11.199 & 1.913 & 79.794 & 8.672 & 1.024 \\
26 & 1.000 & 0.3032 & 23.258 & 5.418 & 1.294 & 27.780 & 4.007 & 0.677 \\
98 & 0.541 & 0.0778 & 11.367 & 3.078 & 0.976 & 13.404 & 2.191 & 0.489 \\
386 & 0.276 & 0.0194 & 4.550 & 1.334 & 0.532 & 5.328 & 0.916 & 0.247 \\
1538 & 0.139 & 0.0048 & 0.244 & 0.202 & 0.192 & 0.180 & 0.070 & 0.061 \\
6146 & 0.070 & 0.0012 & 0.097 & 0.097 & 0.095 & 0.029 & 0.028 & 0.028 \\
          \hline
     \end{tabular}
     \caption{Error $e_{\rm strong}$ using $K=10000$ Monte Carlo samples and $\mathtt{L}=100$ for the truncation of the white noise.}
     \label{tab:error_strong}
\end{table}

\begin{figure}[htbp]
     \centering
     \includegraphics[width=0.55\textwidth]{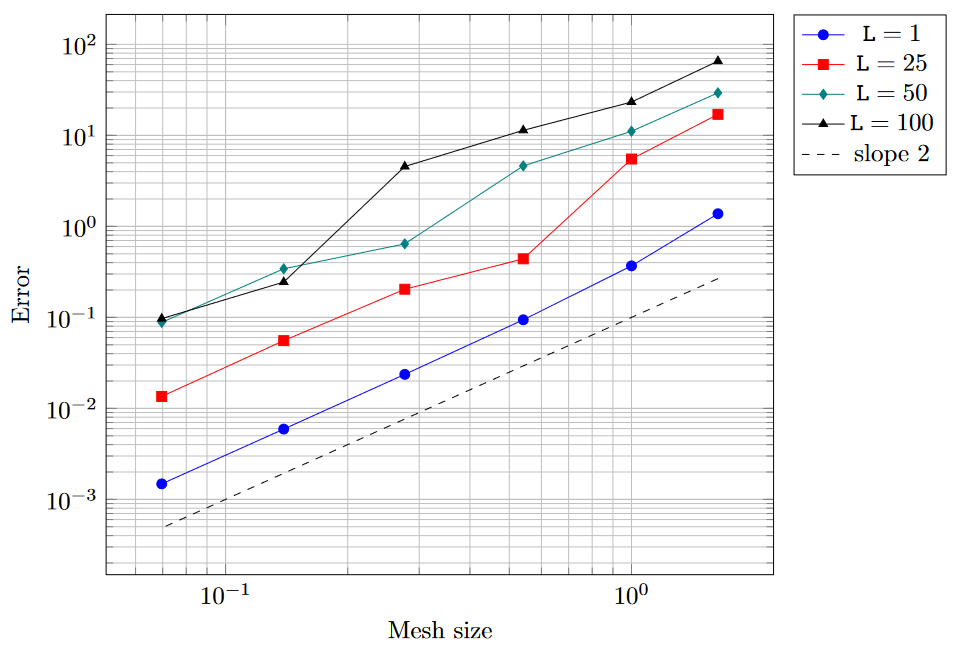}
     \caption{Errors $e_{\rm strong}$ for $\kappa=0.5$ and different values of the truncation parameter $\mathtt{L}$ using $K=10000$ Monte Carlo samples.}
     \label{fig:sphere_errors_strong}
\end{figure}

Regarding the mean square error $e_{\rm weak}$, we report its value along with $\|U_k\|_{L^2(\Omega;L^2(\Gamma))}^2$ for different fractional powers $s$ and the cases $\kappa=2$ and $\kappa=8$ in Tables~\ref{tab:weak_error_kappa2} and \ref{tab:weak_error_kappa8}, respectively.
These tables are supplemented with Figure~\ref{fig:sphere_errors_weak} to better comprehend the evolution of the mean square error as the meshsize varies. We observe that the convergence rates observed numerically match the predictions from Theorem~\ref{t:weak-convergence} when the finite element error is dominant, namely when $s$ is small (i.e. the solution is not smooth) or when $\kappa$ is large (i.e. the correlation length is small). In the other cases,  other sources of error like the Monte Carlo  and the sinc quadrature errors affect the convergence with respect to $h$.

% In the other cases, the other sources of error are no longer negligible, in particular the Monte Carlo and sinc quadrature errors, which affects the convergence with respect to $h$.
% In the other cases, the error is affected by other sources, in particular the Monte Carlo and sinc quadrature parameters $K$ and $k$, which impacts the convergence with respect to $h$.

% When $\kappa=8$ and $h$ is small enough, the convergence rate matches the predicted one indicated by the dashed lines.

\begin{table}[htbp]
\centering
\begin{tabular}{|c|c|c|c|c|c|c|c|}
          \cline{3-8}
          \multicolumn{2}{c|}{ } &  \multicolumn{2}{c|}{$s=0.625$} & \multicolumn{2}{c|}{$s=0.75$} & \multicolumn{2}{c|}{$s=0.9$} \\
          \hline
          $N$ & $h$ & $\|U_k\|^2$ & $e_{\rm weak}$ & $\|U_k\|^2$ & $e_{\rm weak}$ & $\|U_k\|^2$ & $e_{\rm weak}$ \\
          \hline
98 & 0.542 & 1.4399 & 1.4391 & 0.7554 & 0.2899 & 0.3738 & 0.0690 \\
386 & 0.276 & 1.8060 & 1.0729 & 0.8751 & 0.1701 & 0.4103 & 0.03247 \\
1538 & 0.139 & 2.0978 & 0.7812 & 0.9461 & 0.0992 & 0.4248 & 0.0180 \\
6146 & 0.070 & 2.3210 & 0.5579 & 0.9903 & 0.0550 & 0.4336 & 0.0092 \\
24578 & 0.035 & 2.4761 & 0.4028 & 1.0087 & 0.0366 & 0.4339 & 0.0089 \\
\hline
     \end{tabular}
     \caption{Mean square error $e_{\rm weak}$ using $K=1000$ when $\kappa=2$, in which case $\|\tilde u\|_{L^2(\Omega;L^2(\gamma))}^2$ is equal to $2.87891$ for $s=0.625$, $1.04528$ for $s=0.75$, and $0.44277$ for $s=0.9$.}
     \label{tab:weak_error_kappa2}
\end{table}

\begin{table}[htbp]
\centering
\begin{tabular}{|c|c|c|c|c|c|c|c|}
          \cline{3-8}
          \multicolumn{2}{c|}{ } &  \multicolumn{2}{c|}{$s=0.625$} & \multicolumn{2}{c|}{$s=0.75$} & \multicolumn{2}{c|}{$s=0.9$} \\
          \hline
          $N$ & $h$ & $\|U_k\|^2$ & $e_{\rm weak}$ & $\|U_k\|^2$ & $e_{\rm weak}$ & $\|U_k\|^2$ & $e_{\rm weak}$ \\
          \hline
98 & 0.542 & 0.2605 & 1.1429 & 0.0813 & 0.1694 & 0.0203 & 0.0248 \\
386 & 0.276 & 0.4684 & 0.9351 & 0.1329 & 0.1177 & 0.0303 & 0.0148 \\
1538 & 0.139 & 0.6859 & 0.7175 & 0.1774 & 0.0732 & 0.0375 & 0.0076 \\
6146 & 0.070 & 0.8741 & 0.5293 & 0.2083 & 0.0423 & 0.0415 & 0.0036 \\
24578 & 0.035 & 1.0250 & 0.3784 & 0.2278 & 0.0229 & 0.0435 & 0.0015 \\
\hline
     \end{tabular}
     \caption{Mean square error $e_{\rm weak}$ using $K=1000$ when $\kappa=8$, in which case $\|\tilde u\|_{L^2(\Omega;L^2(\gamma))}^2$ is equal to $1.40341$ for $s=0.625$, $0.25063$ for $s=0.75$, and $0.04506$ for $s=0.9$.}
     \label{tab:weak_error_kappa8}
\end{table}

\begin{figure}[htbp]
     \centering
     \includegraphics[width=0.44\textwidth]{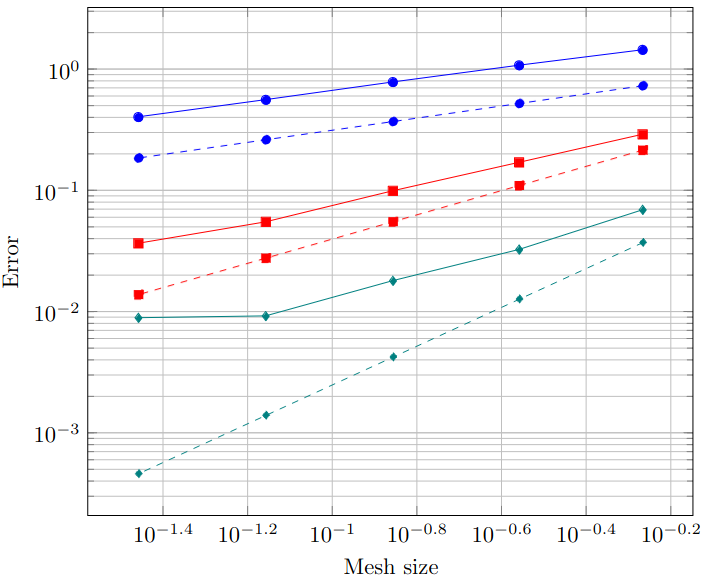}
     \includegraphics[width=0.55\textwidth]{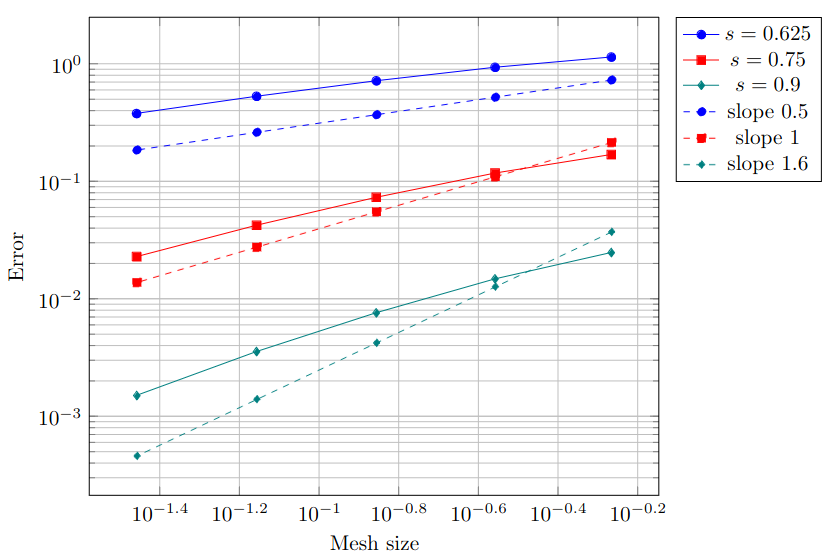}
     \caption{Errors $e_{\rm weak}$ for $\kappa=2$ (left) and $\kappa=8$ (right) using $K=1000$ Monte Carlo samples. The dashed lines indicate the behavior predicted by Theorem~\ref{t:weak-convergence}.}
     \label{fig:sphere_errors_weak}
\end{figure}

\subsubsection{Effect of the Parameters $s$ and $\kappa$}
The fractional power $s$ determines the regularity of the random field. To illustrate this, we give on Figure \ref{fig:one_real_sphere} one realization of $U_k$ for $s=0.55,0.75,0.95$ using $N=1538$ DoFs ($h=0.1391$). For the color map, blue indicates negative values while red indicates positive values, the range of values being $[-2.2614, 1.7720]$ for $s=0.55$, $[-1.1898,0.6849]$ for $s=0.75$, and $[-0.8278,0.2587]$ for $s=0.95$.

% Unweighted mass matrix: $[-2.260, 1.770]$ for $s=0.55$, $[-1.189,0.684]$ for $s=0.75$, and $[-0.827,0.258]$ for $s=0.95$.

%Left: $s=0.55$ in which case $U^k(\bx,w_m)\in[-2.260, 1.770]$; middle: $s=0.75$ in which case $U^k(\bx,w_m)\in[-1.189,0.684]$; right: $s=0.95$ in which case $U^k(\bx,w_m)\in[-0.827,0.258]$.
%; see the caption of Figure \ref{fig:one_real_sphere} for the range of values.

\begin{figure}[htbp]
     \centering
     \includegraphics[width=0.3\textwidth]{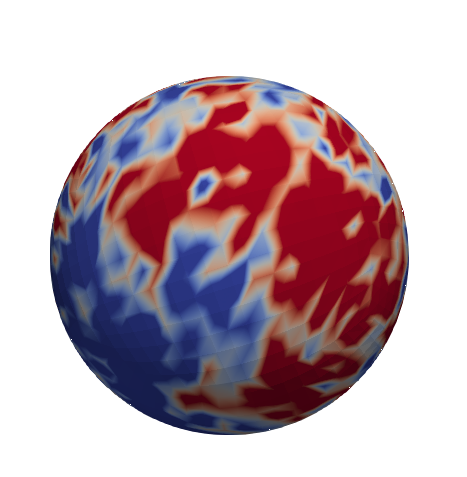}
     \includegraphics[width=0.3\textwidth]{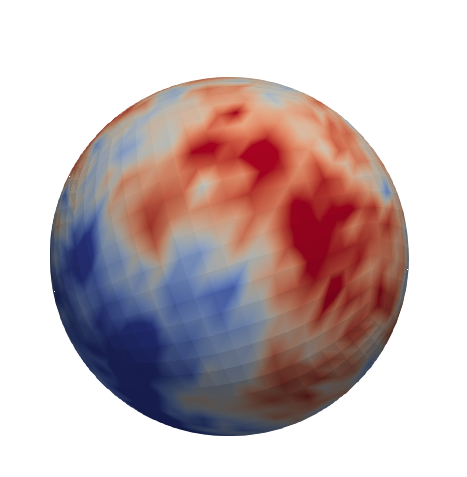}
     \includegraphics[width=0.3\textwidth]{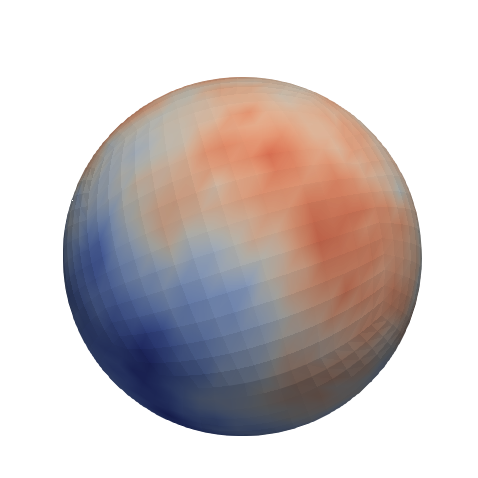}
     \caption{Numerical solution $U_k(\cdot,\omega_m)$ to Problem \eqref{e:spde} on the unit sphere when $\kappa=0.5$ and $s=0.55$ (left), $s=0.75$ (middle), and $=0.95$ (right).}
     \label{fig:one_real_sphere}
\end{figure}

To see the influence of the parameter $\kappa$, which is inversely proportional to the correlation length, we compute the approximated covariance at different points, namely
\begin{equation} \label{def:covU}
     {\rm cov}_{U_k}(\bx,\bx'):=\frac{1}{K-1}\sum_{i=1}^{K}\left(U_k(\bx,\omega_i)-\bar U_k(\bx)\right)\left(U_k(\bx',\omega_i)-\bar U_k(\bx')\right),
\end{equation}
where
\begin{equation*}
     \bar U_k(\bx) := \frac{1}{K}\sum_{i=1}^M U_k(\bx, \omega_i).
\end{equation*}
We take $K=10 000$ for both the approximation of the mean field $\bar U_k$ and the covariance function ${\rm cov}_{U_k}$, but using two different sets of samples, and we take $N=1538$ DoFs ($h=0.139$). The results are reported in Table \ref{tab:SphereCov}. We observe numerically that the smaller $\kappa$ the larger the covariance, and that the evaluation of ${\rm cov}_{U_k}$ at pairs of points with the same (geodesic) distance yields comparable values.

\begin{table}[htbp]
     \centering
     \begin{tabular}{|c|c|c||c|c|}
          \cline{2-5}
          \multicolumn{1}{c|}{ } & \multicolumn{2}{|c||}{$s=0.75$} & \multicolumn{2}{|c|}{$s=0.9$}                             \\
          \cline{2-5}
          \multicolumn{1}{c|}{ } & $\kappa=0.5$                    & $\kappa=2$                    & $\kappa=0.5$ & $\kappa=2$ \\
          \hline
          %% M = 5000
          % $(\bx_1,\bx_2)$ & 0.605766 & 0.00514789 & 0.930213 & 0.00408878 \\
          % \hline
          % $(\bx_1,\bx_3)$ & 0.568575 & 0.00249903 & 0.894372 & 0.00139338 \\
          % \hline
          % $(\bx_2,\bx_3)$ & 0.606682 & 0.00506367 & 0.930729 & 0.00401754 \\
          % M = 10 000
          $(\bx_1,\bx_2)$        & 0.623685                        & 0.005944                      & 0.951398     & 0.004374   \\
          \hline
          $(\bx_1,\bx_3)$        & 0.577621                        & 0.001588                      & 0.909999     & 0.000980   \\
          \hline
          $(\bx_2,\bx_3)$        & 0.617366                        & 0.004903                      & 0.945554     & 0.003722   \\
          \hline
     \end{tabular}
     \caption{Covariance function defined in \eqref{def:covU} evaluated at the following points: the \emph{South Pole} $\bx_1=(0,0,-1)$, the point $\bx_2=(0,1,0)$ which lies on the Equator, and the \emph{North Pole} $\bx_3=(0,0,1)$.}
     \label{tab:SphereCov}
\end{table}

\subsection{Gaussian Mat\'ern Random Field on a Torus}

Here the surface $\gamma$ is a torus with parametrization
\begin{equation}\label{def:torus}
     (x,y,z) = ((R+r\cos(\theta))\cos(\phi),r\sin(\theta),(R+r\cos(\theta))\sin(\phi))
\end{equation}
for $\theta,\phi\in[0,2\pi)$. In what follows, we set $r=0.5$ and $R=2$. One realization for the case $\kappa=0.5$ is given on Figure \ref{fig:one_real_torus} for $s=0.55,0.75,0.95$ using $N=5120$ ($h=0.1386$). As above, blue indicates negative values while red stands for positive values, the range of values being in this case $[-2.6314, 3.0533]$ for $s=0.55$, $[-1.5083,1.5491]$ for $s=0.75$, and $[-1.2159,0.7646]$ for $s=0.95$.

% Unweighted mass: $[-2.629, 3.049]$ for $s=0.55$, $[-1.507,1.547]$ for $s=0.75$, and $[-1.215,0.763]$ for $s=0.95$.

\begin{figure}[htbp]
     \centering
     \includegraphics[width=0.3\textwidth]{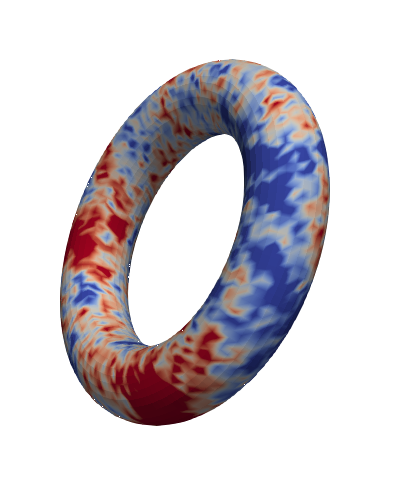}
     \includegraphics[width=0.3\textwidth]{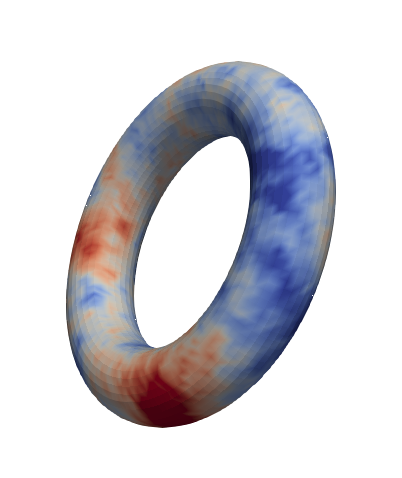}
     \includegraphics[width=0.3\textwidth]{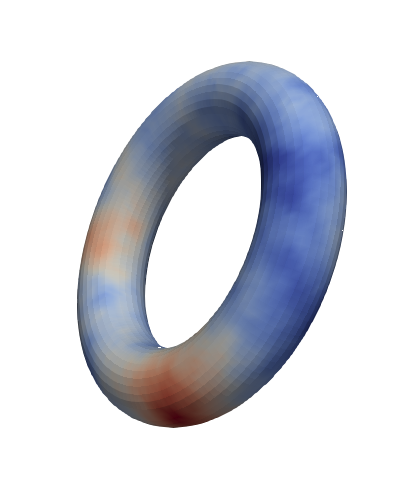}
     \caption{Numerical solution to Problem \eqref{e:spde} on the torus defined in \eqref{def:torus} when $\kappa=0.5$ and $s=0.55$ (left), $s=0.75$ (middle), and $s=0.95$ (right).}
     \label{fig:one_real_torus}
\end{figure}

As for the sphere, we evaluate the covariance function ${\rm cov}_{U_k}$ at different points and for different values of the parameters $s$ and $\kappa$, see Table \ref{tab:SphereCov} for the results obtained when $N=1280$ ($h=0.2757$). As above, we observe that the covariance is inversely proportional to $\kappa$ and the inverse of the geodesic distance between the two points.
%{\bf AB: please check} \DGc{Also I am running the simulations for $N=5120$ ($h=0.1386$), but I don't expect them to be done soon...}

\begin{table}[htbp]
     \centering
     \begin{tabular}{|c|c|c||c|c|}
          \cline{2-5}
          \multicolumn{1}{c|}{ } & \multicolumn{2}{|c||}{$s=0.75$} & \multicolumn{2}{|c|}{$s=0.9$}                             \\
          \cline{2-5}
          \multicolumn{1}{c|}{ } & $\kappa=0.5$                    & $\kappa=2$                    & $\kappa=0.5$ & $\kappa=2$ \\
          \hline
          % M = 10 000
          $(\bx_1,\bx_2)$        & 0.377470                        & 0.015192                      & 0.505575     & 0.010112   \\
          \hline
          $(\bx_1,\bx_3)$        & 0.360484                        & 0.006877                      & 0.497597     & 0.005097   \\
          \hline
          $(\bx_2,\bx_3)$        & 0.401743                        & 0.017716                      & 0.529588     & 0.011722   \\
          \hline
     \end{tabular}
     \caption{Covariance function defined in \eqref{def:covU} evaluated at the following points: $\bx_1=(1.5,0,0)$, $\bx_2=(2,0.5,0)$, and $\bx_3=(2.5,0,0)$ which correspond to the cases $\phi=0$ and $\theta=\pi,0,\pi/2$, respectively.}
     \label{tab:TorusCov}
\end{table}

% $N=1280$ ($h=0.276$).
% Unweighted mass, M = 10 000
% $(\bx_1,\bx_2)$ & 0.3752 & 0.0151291 & 0.502366 & 0.0100695  \\
% $(\bx_1,\bx_3)$ & 0.357998 & 0.00683005 & 0.494147 & 0.00506259 \\
% $(\bx_2,\bx_3)$ & 0.398601 & 0.0175484 & 0.525582 & 0.0116124  \\

\appendix

\section{Proof of Lemma~\ref{l:frac-geo-error}} \label{sec:appendixB}
We follow the argument from Section~4 of \cite{bonito2021approximation}.
Note that from the definition of $\mathcal Q_k^{-s}$, we have
% Define 
$$
     P \mathcal Q_k^{-s}(\widetilde L_{\mathcal T}) \widetilde W^\Psi - \mathcal Q_k^{-s}(L_{\mathcal T}) (\sigma P\widetilde W^\Psi)
     = \frac{k \sin(\pi s)}{\pi} \sum_{l=-\mathtt M}^{\mathtt N} e^{(1-s) y_l} \mathcal E_l \widetilde W^\Psi
$$
where  $\mathcal E_l: \widetilde{\mathbb V}(\mathcal T) \rightarrow \mathbb V(\mathcal T)$ is given by
%  \textcolor{cyan}{DG: Should we say $\mathcal E_l: L^2(\Omega;\widetilde{\mathbb V}(\mathcal T)) \rightarrow L^2(\Omega;\mathbb V(\mathcal T))$? (In fact I don't think so; we do everything pointwise in $\omega$ and all the equations are understood $\mathbb{P}$-almost surely.)} 
\begin{equation}\label{e:diff}
     \mathcal E_l  := P(\mu_l I + \widetilde L_{\mathcal T})^{-1} - (\mu_l I + L_{\mathcal T})^{-1}(\sigma P)
\end{equation}
with $\mu_l := e^{y_l}$ and $y_l$ are the sinc quadrature points; see \eqref{e:approx_quad}.
The decomposition of the operator $\mathcal E_l$
\[
     \begin{aligned}
          \mathcal E_l & = (\mu_l I + L_{\mathcal T})^{-1}
               [(\mu_l I + L_{\mathcal T})P - \sigma P(\mu_l I + \widetilde L_{\mathcal  T})]
          (\mu_l I + \widetilde L_{\mathcal T})^{-1}                                                \\
                       & = L_{\mathcal T}(\mu_l I + L_{\mathcal T})^{-1}
          (P\widetilde T_{\mathcal T} - T_{\mathcal T}\sigma P)
          \widetilde L_{\mathcal T} (\mu_l I + \widetilde L_{\mathcal T})^{-1}                      \\
                       & \qquad
          \mu_l(\mu_l I + L_{\mathcal T})^{-1} (1-\sigma)P(\mu_l I +\widetilde L_{\mathcal T})^{-1} \\
                       & =: \mathcal E_l^1 + \mathcal E_l^2 .
     \end{aligned}
\]
is instrumental in the error analysis below.

The next result recall estimate for some terms in the above decomposition. We refer to Lemma~4.5 in \cite{bonito2021approximation} for more details.
\begin{lemma}\label{l:discrete-operator}
     Let $p\in [-1,1]$ and $q\in \mathbb R$ be such that $p+q\in[0,2]$. Then for any $\mu>0$ and any $\widetilde F\in \widetilde{\mathbb V}(\mathcal T)$ there holds
     % Given $p\in (-\tfrac32,\tfrac32)$ and $\mu >0$, let $q\in (-\tfrac32,\tfrac32)$ satisfying $p+q\in[0,2]$. Then there holds that for $\widetilde F\in \widetilde{\mathbb V}(\mathcal T)$,
     \begin{equation}\label{i:appendix-esti-1}
          \|\widetilde L_{\mathcal T}(\mu I +\widetilde L_{\mathcal T} )^{-1}\widetilde F\|_{H^{-p}(\gamma)} \lesssim \mu^{-(p+q)/2} \|\widetilde L_{\mathcal T}^{q/2}\widetilde F\|_{L^2(\gamma)}.
     \end{equation}
     Moreover, for any $F\in \mathbb V(\mathcal T)$ we have
     \begin{equation}\label{i:appendix-esti-2}
          \|L_{\mathcal T}(\mu I + L_{\mathcal T} )^{-1} F\|_{L^2(\Gamma)} \lesssim \mu^{-1/2} \| F\|_{H^1(\Gamma)}
     \end{equation}
     and
     \begin{equation}\label{i:appendix-esti-3}
          \|(\mu I + L_{\mathcal T} )^{-1} F\|_{L^2(\Gamma)}\lesssim \mu^{-1} \| F\|_{L^2(\Gamma)}.
     \end{equation}
\end{lemma}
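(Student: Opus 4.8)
The plan is to diagonalize each resolvent in the $L^2$-orthonormal eigenbasis of the relevant discrete operator, thereby reducing all three assertions to a single elementary scalar inequality on the eigenvalues; the only structural input is the equivalence between the discrete spectral norms $\|\widetilde L_{\mathcal T}^{r/2}\cdot\|_{L^2(\gamma)}$ (resp. $\|L_{\mathcal T}^{r/2}\cdot\|_{L^2(\Gamma)}$) and the Sobolev norms on the finite element spaces. The master scalar inequality, used throughout, is that for $\alpha\in[0,2]$ and $\lambda,\mu>0$
\begin{equation}\label{e:scalar-young}
     \mu^{\alpha}\lambda^{2-\alpha}\le (\mu+\lambda)^2,
\end{equation}
which is trivial at $\alpha\in\{0,2\}$ and, for $\alpha\in(0,2)$, follows from Young's inequality $\mu^{\alpha}\lambda^{2-\alpha}=(\mu^2)^{\alpha/2}(\lambda^2)^{1-\alpha/2}\le\tfrac{\alpha}2\mu^2+(1-\tfrac\alpha2)\lambda^2\le(\mu+\lambda)^2$.

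For \eqref{i:appendix-esti-1}, I expand $\widetilde F=\sum_{j=1}^N \widetilde F_j\widetilde\Psi_j^\gamma$ in the eigenbasis of $\widetilde L_{\mathcal T}$ from \eqref{e:dicrete_eig}, so that $\widetilde G:=\widetilde L_{\mathcal T}(\mu I+\widetilde L_{\mathcal T})^{-1}\widetilde F=\sum_j \tfrac{\widetilde\Lambda_j^\gamma}{\mu+\widetilde\Lambda_j^\gamma}\widetilde F_j\widetilde\Psi_j^\gamma\in\widetilde{\mathbb V}(\mathcal T)$. The first step is the \emph{discrete spectral--Sobolev bound}: for $\widetilde G\in\widetilde{\mathbb V}(\mathcal T)$ and $p\in[-1,1]$,
\begin{equation}\label{e:spec-sob}
     \|\widetilde G\|_{H^{-p}(\gamma)}\lesssim \|\widetilde L_{\mathcal T}^{-p/2}\widetilde G\|_{L^2(\gamma)}.
\end{equation}
Granting \eqref{e:spec-sob}, the orthonormality \eqref{e:normalization} gives $\|\widetilde L_{\mathcal T}^{-p/2}\widetilde G\|_{L^2(\gamma)}^2=\sum_j \tfrac{(\widetilde\Lambda_j^\gamma)^{2-p}}{(\mu+\widetilde\Lambda_j^\gamma)^2}\widetilde F_j^2$, and the desired bound $\mu^{-(p+q)}\sum_j(\widetilde\Lambda_j^\gamma)^q\widetilde F_j^2$ follows termwise from \eqref{e:scalar-young} with $\alpha=p+q\in[0,2]$ and $\lambda=\widetilde\Lambda_j^\gamma$, after reducing to $\mu^{p+q}(\widetilde\Lambda_j^\gamma)^{2-p-q}\le(\mu+\widetilde\Lambda_j^\gamma)^2$. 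To prove \eqref{e:spec-sob} I split by sign. When $-p\in[0,1]$ it is the forward equivalence: the endpoints $-p=0$ (trivial) and $-p=1$, where $\|\widetilde L_{\mathcal T}^{1/2}\widetilde G\|_{L^2(\gamma)}^2=a_\gamma(\widetilde G,\widetilde G)\sim\|\widetilde G\|_{H^1(\gamma)}^2$ by coercivity and continuity of $a_\gamma$, are interpolated; since the powers of the self-adjoint operator $\widetilde L_{\mathcal T}$ form an exact Hilbert scale and $\widetilde{\mathbb V}(\mathcal T)\subset H^1(\gamma)$, functoriality of interpolation (with the subspace interpolation norm dominating the ambient $H^{-p}(\gamma)$ norm) yields \eqref{e:spec-sob}, exactly as in the dotted-space framework of \cite{bonito2015numerical,bonito2021approximation}. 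When $p\in(0,1]$ I argue by duality: for $\widetilde\phi\in H^p(\gamma)$, the orthogonality \eqref{e:L2_conforming} gives $(\widetilde G,\widetilde\phi)_\gamma=(\widetilde G,\widetilde{\mathbf\Pi}\widetilde\phi)_\gamma$, and Cauchy--Schwarz in the eigenbasis gives $(\widetilde G,\widetilde\phi)_\gamma\le\|\widetilde L_{\mathcal T}^{-p/2}\widetilde G\|_{L^2(\gamma)}\,\|\widetilde L_{\mathcal T}^{p/2}\widetilde{\mathbf\Pi}\widetilde\phi\|_{L^2(\gamma)}$; the forward equivalence and the $H^p(\gamma)$-stability of $\widetilde{\mathbf\Pi}$ (from \eqref{e:error_l2_g}, an inverse estimate, and interpolation) bound the last factor by $\|\widetilde\phi\|_{H^p(\gamma)}$, and taking the supremum over $\widetilde\phi$ produces \eqref{e:spec-sob}.

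Estimates \eqref{i:appendix-esti-2} and \eqref{i:appendix-esti-3} are the easier endpoint cases on $\Gamma$ and need no fractional machinery. Diagonalizing $L_{\mathcal T}$ from \eqref{e:discrete_T} in its $L^2(\Gamma)$-orthonormal eigenbasis $\{\Psi_j\}$ with eigenvalues $\Lambda_j$ and writing $F=\sum_j F_j\Psi_j$, estimate \eqref{i:appendix-esti-3} is immediate since $\mu+\Lambda_j\ge\mu$, giving $\|(\mu I+L_{\mathcal T})^{-1}F\|_{L^2(\Gamma)}^2=\sum_j(\mu+\Lambda_j)^{-2}F_j^2\le\mu^{-2}\|F\|_{L^2(\Gamma)}^2$. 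For \eqref{i:appendix-esti-2} I use $\|L_{\mathcal T}^{1/2}F\|_{L^2(\Gamma)}^2=A_\Gamma(F,F)\sim\|F\|_{H^1(\Gamma)}^2$ (coercivity and continuity of $A_\Gamma$, with $\kappa^2>0$ supplying the full $H^1(\Gamma)$ norm and $h$-independent constants), whence $\|L_{\mathcal T}(\mu I+L_{\mathcal T})^{-1}F\|_{L^2(\Gamma)}^2=\sum_j\big(\tfrac{\Lambda_j}{\mu+\Lambda_j}\big)^2F_j^2\lesssim\mu^{-1}\sum_j\Lambda_jF_j^2\sim\mu^{-1}\|F\|_{H^1(\Gamma)}^2$, the middle step being \eqref{e:scalar-young} with $\alpha=1$.

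The main obstacle is the $h$-uniformity of the discrete spectral--Sobolev bound \eqref{e:spec-sob} at fractional and, especially, negative orders. The interpolation argument must be run carefully because one interpolates norms on the \emph{fixed} subspace $\widetilde{\mathbb V}(\mathcal T)$ rather than on the ambient spaces, and the negative-order case rests on the $H^p(\gamma)$-stability of the $L^2$-projection $\widetilde{\mathbf\Pi}$, which in turn requires quasi-uniformity of $\mathcal T$ (through $c_q$, $c_v$) and an inverse estimate. All constants are independent of $h$, though they may depend on $\kappa$ and the mesh-regularity parameters, precisely as in \cite{bonito2021approximation}; once \eqref{e:spec-sob} is in hand, every estimate collapses onto the scalar inequality \eqref{e:scalar-young}.
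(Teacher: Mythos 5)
Your proof is correct. The paper does not actually prove this lemma itself---it defers entirely to Lemma~4.5 of \cite{bonito2021approximation}---and your argument (diagonalization in the discrete eigenbasis, reduction of all three estimates to the scalar bound $\mu^{\alpha}\lambda^{2-\alpha}\le(\mu+\lambda)^2$ for $\alpha\in[0,2]$, and the $h$-uniform equivalence $\|\widetilde G\|_{H^{-p}(\gamma)}\lesssim\|\widetilde L_{\mathcal T}^{-p/2}\widetilde G\|_{L^2(\gamma)}$ obtained by Hilbert-scale interpolation for $p\in[-1,0]$ and by duality through $\widetilde{\mathbf\Pi}$, with its $H^p(\gamma)$-stability resting on \eqref{e:error_l2_g} and quasi-uniformity, for $p\in(0,1]$) is essentially the route taken in that reference, so it stands as a valid self-contained substitute for the citation.
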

We also note that the arguments leading to \eqref{i:bochner-decay} but using expansions based on the discrete eigenpairs imply that if $r\in \left( (n-1)/2,2s \right)$ and $\mu>0$ we have
\begin{equation}\label{i:appendix-esti-4}
     \|(\mu I + \widetilde L_{\mathcal T})^{-1}\widetilde F\|_{L^2(\gamma)} \lesssim \|\widetilde L^{-r/2}_{\mathcal T}\widetilde F\|_{L^2(\gamma)}
     \left\{
     \begin{aligned}
           & 1,                   &  & \text{when } \mu \le 1, \\
           & \mu^{\tfrac r2 - 1}, &  & \text{when } \mu > 1.
     \end{aligned}
     \right.
\end{equation}
for any $\widetilde F \in \widetilde{\mathbb V}(\mathcal T)$.

To prove Lemma~\ref{l:frac-geo-error}, it suffices to show that
\begin{equation} \label{e:splitting_Eps_l}
     S_i:=k\sum_{l=-\mathtt M}^{\mathtt N} \mu_l^{1-s}\|\mathcal E_l^i \widetilde W^\Psi \|_{L^2(\Omega;L^2(\Gamma))} \le C(h)h^2, \quad i=1,2.
\end{equation}
which we do now by estimating each term separately.
% We separately bound the sum in \eqref{e:error-sum} for the errors $\mathcal E_l^1$ and  $\mathcal E_l^2$ in the $L^2(\Gamma)$-norm provided that the data is $\widetilde W^\Psi\in H^{-r}(\gamma)$ with $r\in (\tfrac{n-1}2,2s)$.

%We let $r\in(\tfrac{n-1}2, 2s)$ and bound each term separately. 
We start with $S_2$ and let $r \in ( \tfrac{n-1}2 , 2s)$.
Thanks to \eqref{i:appendix-esti-3}, the geometric error \eqref{i:sigma}, and \eqref{i:appendix-esti-4}, we obtain
$$
     S_2   \lesssim h^2\|\widetilde L^{-r/2}_{\mathcal T}\widetilde W^\Psi\|_{L^2(\Omega;L^2(\gamma))}
     \bigg(\sum_{\mu_l \le 1} k\mu_l^{1-s} + \sum_{\mu_l > 1} k\mu_l^{-s+r/2}\bigg),
$$
The estimate of $\|\widetilde L^{-r/2}_{\mathcal T}\widetilde W^\Psi\|_{L^2(\Omega;L^2(\gamma))}$ provided by Lemma~\ref{l:negative-boundedness} in conjunction with $-s+r/2<0$, yield
$$
     S_2 \lesssim h^2 \|\widetilde L^{-r/2}_{\mathcal T}\widetilde W^\Psi\|_{L^2(\Omega;L^2(\gamma))} \lesssim h^2,
$$
which is the desired estimate in disguised (with $C(h) \lesssim 1$).

For $S_1$, we first estimate the discrepancy between $\widetilde U_{\mathcal T} := \widetilde T_{\mathcal T}\widetilde F$ and $U_{\mathcal T}:=T_{\mathcal T}\sigma P\widetilde F$ for any $\widetilde F\in \widetilde{\mathbb V}(\mathcal T)$. By definition, see \eqref{e:discrete_T}, $U_{\mathcal T}$ satisfies
\[
     A_{\Gamma}(U_{\mathcal T},V) = \int_{\Gamma}\sigma P\widetilde F V
\]
for any $V\in \mathbb V(\mathcal T)$.
In turn, the change of variable formula \eqref{e:area} together with the definition of $\widetilde U_{\mathcal T}$
imply
$$
     A_{\Gamma}(U_{\mathcal T},V) =  \int_{\gamma} \widetilde F(P^{-1} V)\ = a_{\gamma}(\widetilde U_{\mathcal T},P^{-1} V)
$$
upon realizing that $P^{-1}V \in \widetilde{\mathbb V}(\mathcal T)$.
Consequently, we find that
% for any $V\in \mathbb V(\mathcal T)$ we have
\[
     \begin{aligned}
          \kappa^2 \int_\Gamma & (P\widetilde U_{\mathcal T} - U_{\mathcal T})V
          + \int_\Gamma \nabla_\Gamma(P\widetilde U_{\mathcal T} - U_{\mathcal T})\cdot\nabla_\Gamma V                    \\
                               & = A_\Gamma(P\widetilde U_{\mathcal T}, V) - a_\gamma(\widetilde U_{\mathcal T}, P^{-1}V) \\
                               & = \kappa^2\int_\gamma (P^{-1}\sigma^{-1}-1) \widetilde U_{\mathcal T}(P^{-1}V)
          + \int_\gamma \nabla_\gamma\widetilde U_{\mathcal T}\cdot \widetilde{\mathbf E}\nabla_\gamma (P^{-1}V),
     \end{aligned}
\]
% $\widetilde V=P^{-1}V$, $\widetilde\sigma=P^{-1}\sigma$ \DGc{here not consistent but heavy notation to keep $P^{-1}$...}, and 
% Lemma~15 \DGc{is it Lemma 19?}
where the error matrix $\widetilde{\mathbf E}$ satisfies $\|\widetilde{\mathbf E}\|_{L^\infty(\gamma)}\lesssim h^2$ (see e.g. \cite[Corollary~33]{bonito2020finite}). We now set $V=P\widetilde U_{\mathcal T} - U_{\mathcal T}$ so that with Cauchy-Schwarz inequality and the geometric consistency  \eqref{i:sigma}, we get
\begin{eqnarray}\label{i:geo-solution-error}
     \|(P\widetilde T_{\mathcal T}-T_{\mathcal T}\sigma P)\widetilde F\|_{H^1(\Gamma)}=\|P\widetilde U_{\mathcal T} - U_{\mathcal T}\|_{H^1(\Gamma)} & \lesssim & h^2\|\widetilde U_{\mathcal T}\|_{H^1(\gamma)} \nonumber \\
     & \lesssim & h^2 \|\widetilde F\|_{H^{-1}(\gamma)} .
\end{eqnarray}
We next estimate $\|\mathcal E_l^1 \widetilde W^\Psi\|_{L^2(\Omega;L^2(\Gamma))}$ distinguishing two cases.

% Letting $r=\tfrac{n-1}{2}+2\epsilon$ with $\epsilon\in(0,s-(n-1)/4)$, we next estimate $\mathcal E_l^1 \widetilde W^\Psi$ for $\widetilde W^\Psi\in H^{-r}(\gamma)$ in the following two cases:

\boxed{1} If $\mu_l > 1$, we apply successively
\eqref{i:appendix-esti-2}, \eqref{i:geo-solution-error} and \eqref{i:appendix-esti-1} (with $p=1$ and $q=-\min\{1,r\}$) to get
\[
     \begin{aligned}
          \|\mathcal E_l^1 \widetilde W^\Psi\|_{L^2(\Omega;L^2(\Gamma))}
           & \lesssim \mu_l^{\min\left\{-\tfrac12, \tfrac r2 - 1\right\}} h^2 \|\widetilde L_{\mathcal T}^{q/2}\widetilde W^\Psi\|_{L^2(\Omega;L^2(\gamma))} \\
           & \lesssim \mu_l^{\tfrac r2 - 1} h^2 \|\widetilde L_{\mathcal T}^{q/2}\widetilde W^\Psi\|_{L^2(\Omega;L^2(\gamma))},
     \end{aligned}
\]
where we used that $\mu_l^{-1/2}\le \mu_l^{r/2-1}$ if $r>1$.

\boxed{2} If $0 < \mu_l \le 1$, we set again $q=-\min\{1,r\}$. Using to the relation $L_{\mathcal T}(\mu_l I+L_{\mathcal T})^{-1}=I-\mu_l(\mu_l I + L_{\mathcal T})^{-1}$ and \eqref{i:appendix-esti-3} we have
\[
     \begin{aligned}
          \|L_{\mathcal T}(\mu_l I & + L_{\mathcal T})^{-1} F\|_{L^2(\Gamma)}                                                          \\
                                   & \le \|F\|_{L^2(\Gamma)} + \mu_l\|(\mu_l I+L)^{-1}F\|_{L^2(\Gamma)} \lesssim \|F\|_{L^2(\Gamma)} .
     \end{aligned}
\]
Moreover, \eqref{i:geo-solution-error} implies that
\[
     \|(P\widetilde T_{\mathcal T}-T_{\mathcal T}\sigma P)\widetilde F\|_{H^1(\Gamma)}                                              \lesssim h^2\|\widetilde F\|_{H^{q}(\gamma)},
\]
while \eqref{i:appendix-esti-1} with $p=-q$ reads
\[
     \|\widetilde L_{\mathcal T}(\mu_l I + \widetilde L_{\mathcal T})^{-1} \widetilde F\|_{H^{q}(\gamma)} \lesssim \|\widetilde L_{\mathcal T}^{q/2}\widetilde F\|_{L^2(\gamma)} .
\]
Gathering the above three estimates yields
\[
     \|\mathcal E_l^1 \widetilde W^\Psi\|_{L^2(\Omega;L^2(\Gamma))}
     \lesssim h^2 \|\widetilde L_{\mathcal T}^{q/2}\widetilde W^\Psi\|_{L^2(\Omega;L^2(\gamma))} .
\]
and as a consequence
\[
     \begin{aligned}
          S_1 & \lesssim h^2\|\widetilde L_{\mathcal T}^{q/2}\widetilde W^\Psi\|_{L^2(\Omega;L^2(\gamma))}
          \bigg(\sum_{\mu_l \le 1} k\mu_l^{1-s} + \sum_{\mu_l > 1} k\mu_l^{-s+r/2}\bigg)
          \\
              & \lesssim h^2 \|\widetilde L_{\mathcal T}^{q/2}\widetilde W^\Psi\|_{L^2(\Omega;L^2(\gamma))}.
     \end{aligned}
\]
Recall that $q= -\min(1,r)$ and so we now argue depending on whether $r\le 1$ (which can only happen when $n=2$) and $r>1$. When $r\le 1$, we have $q=-r$ and thus $S_1\lesssim h^2$ by Lemma~\ref{l:negative-boundedness}. For $r>1$, i.e. $q=-1$, we write $r=1+\epsilon$ with $\epsilon\in(0,2s-1)$ and we invoke an inverse inequality to write
\[
     S_1 \lesssim h^{2-\epsilon} \|\widetilde L_{\mathcal T}^{-(1+\epsilon)/2}\widetilde W^\Psi\|_{L^2(\Omega;L^2(\gamma))}\lesssim \frac{1+\epsilon}{1+\epsilon-(n-1)/2}h^{2-\epsilon},
\]
where we applied Lemma~\ref{l:negative-boundedness} for the second inequality. To conclude, we choose $\epsilon = (2s-1)/\ln(h^{-1})$ yielding $S_1\lesssim h^2$ for $n=2$ and $S_1\lesssim \ln(h^{-1})h^2$ for $n=3$. The proof is now complete.

\bibliographystyle{plain}
\bibliography{main}

\end{document}